\pgfplotsset{compat=1.15}
\newtheorem{thm}{Theorem}
\newtheorem{prop}[thm]{Proposition}
\newtheorem{lem}[thm]{Lemma}
\theoremstyle{definition}
\newtheorem{defn}[thm]{Definition}
\providecommand{\customgenericname}{}
\newcommand{\newcustomtheorem}[2]{%
  \newenvironment{#1}[1]
  {%
   \renewcommand\customgenericname{#2}%
   \renewcommand\theinnercustomgeneric{##1}%
   \innercustomgeneric
  }
  {\endinnercustomgeneric}
}
\theoremstyle{remark}
\newtheorem{rem}[thm]{Remark}
\newcommand{\Z}{\mathbb{Z}}
\newcommand{\R}{\mathbb{R}} 
\newcommand{\C}{\mathbb{C}}
\newcommand{\N}{\mathbb{N}}
\newcommand{\1}{\mathbf{1}}
\newcommand{\As}{\mathscr{A}}
\newcommand{\Bs}{\mathscr{B}}
\newcommand{\Ds}{\mathscr{D}}
\newcommand{\Ec}{\mathcal{E}}
\newcommand{\Eb}{\mathbf{E}}
\newcommand{\Es}{\mathscr{E}}
\newcommand{\Fs}{\mathscr{F}}
\newcommand{\Ns}{\mathscr{N}}
\newcommand{\Ss}{\mathscr{S}}
\newcommand{\Uc}{\mathcal{U}}
\newcommand{\Xs}{\mathscr{X}}
\newcommand{\eps}{\varepsilon}
\newcommand{\supp}{\operatorname{supp}}
\newcommand{\loc}{\mathrm{loc}}
\newcommand{\Coorvec}[1]{\frac\partial{\partial#1}}
\def\XXint#1#2#3{{\setbox0=\hbox{$#1{#2#3}{\int}$ }
\vcenter{\hbox{$#2#3$ }}\kern-.6\wd0}}
	\title[Half space extension]{On Seeley-type Universal Extension Operators for the Upper Half Space}        
\author[]{Haowen Lu} 
\address{Haowen Lu, Department of Mathematics,
	The Chinese University of Hong Kong, Shatin, N.T., Hong Kong, China} 
\email{hwlu@math.cuhk.edu.hk}
\author[]{Liding Yao} 
\address{Liding Yao, Department of Mathematics,
	The Ohio State University, Columbus, OH 43210, United States} 
\email{yao.1015@osu.edu}
\subjclass[2020]{46E35 (primary) and 46F10 (secondary)} 
\begin{document}
\begin{abstract}
    Modified from the standard half-space extension via reflection principle, we construct a linear extension operator for the upper half space $\Bbb R^n_+$ that has the form $Ef(x)=\sum_{j=-\infty}^\infty a_jf(x',-b_jx_n)$ for $x_n<0$. We prove that $E$ is bounded in all $C^k$-spaces, Sobolev and H\"older spaces, Besov and Triebel-Lizorkin spaces, along with their Morrey generalizations. We also give an analogous construction on bounded smooth domains.
\end{abstract}
\maketitle
\vspace{-0.15in}
\section{Introduction}
Given a function $f$ defined on the upper half space $\R^n_+=\{(x',x_n):x_n>0\}$, the  \textit{reflection principle} gives a well-known construction to extend $f$ to the total space $\R^n$ while preserving its regularity property: we can define the extension $Ef$ that depends linearly on $f$ by
\begin{equation}\label{Eqn::ExtOpIntro}
    \textstyle Ef(x',x_n)=E^{a,b}f(x',x_n):=\sum_ja_jf(x',-b_jx_n),\quad\text{for }x_n<0,\quad\text{where }b_j>0.
\end{equation}
Here $(a_j,b_j)_j$ are  (finitely or infinitely many) real numbers that satisfy certain algebraic conditions (see below). This method traces back to Lichtenstein \cite{LichtensteinExtension} for $C^1$-extensions and \cite{HestenesExtension} for $C^k$-extensions. See also \cite{Nikolskii} and \cite{Babic} for the corresponding Sobolev extensions.
% They showed that if $a_j,b_j$ are finite and $\sum_ja_j(-b_j)^k=1$ for $k=0,1,\dots,m$, then $E:C^k(\overline{\R^n_+})\to C^k(\R^n)$ is bounded linear for $0\le k\le m$.

In the spaces of distributions, a linear extension operator $E$ is viewed as a right inverse of the natural restriction map $\Ds'(\R^n)\twoheadrightarrow\Ds'(\R^n_+)$. Triebel \cite{TriebelInterpolation} and Franke \cite{FrankeDomain} showed that for every $\eps>0$ there is a $m=m(\eps)>0$, such that, if the finite collection $(a_j,b_j)_j$ satisfy $\sum_ja_j(-b_j)^k=1$ for all integers $-m\le k\le m$, then for $E$ in \eqref{Eqn::ExtOpIntro} have boundedness in Besov spaces $E:\Bs_{pq}^s(\R^n_+)\to\Bs_{pq}^s(\R^n) $ and in Triebel-Lizorkin spaces $E:\Fs_{pq}^s(\R^n_+)\to\Fs_{pq}^s(\R^n)$, for all $\eps<p,q\le\infty$ and $-\eps^{-1}<s<\eps^{-1}$ ($p<\infty$ in $\Fs$-cases). In \cite[Section~1.11.5]{TriebelTheoryOfFunctionSpacesIII} such $E$ is called a \textit{common extension operator} which depends on $\eps(>0)$. 

In contrast, we call $E$ a \textbf{universal extension operator}, if we have the boundedness $E:\As_{pq}^s(\R^n_+)\to\As_{pq}^s(\R^n)$ for all $\As\in\{\Bs,\Fs\}$, $0<p,q\le\infty$ and $s\in\R$ ($p<\infty$ in $\Fs$-cases), simultaneously (i.e. we can take $\eps=0$).

Note that the collections $(a_j,b_j)_j$ discussed above are all finite. In those cases, the range of spaces on which $E$ is bounded would inevitably be finite and depends on $\eps>0$. Hence such $E$ is never universal. 

By taking some infinitely nonzero sequences $(a_j,b_j)_j$, Seeley \cite{Seeley} constructed one such operator \eqref{Eqn::ExtOpIntro} such that $E:C^k(\R^n_+)\to C^k(\R^n)$ for all $k=0,1,2,\dots$. In particular Seeley's extension  perverse $C^\infty$-smoothness. However, to the best of authors' knowledge, there was no proof of the boundedness of Seeley's operator on general Besov and Triebel-Lizorkin spaces.

In this paper we generalize Seeley's construction by extending the boundedness to the spaces of negative index, and we show that such operator is a universal extension operator. %This appears to be a well-known result, but no proof is found in the literature, to the best of the authors' knowledge.
\begin{thm}\label{MainThm}
\begin{enumerate}[(i)]
    \item\label{Item::Thm::Exist} There exists an extension operator $E:C^0(\overline{\R^n_+})\to C^0(\R^n)$  that has the form
\begin{equation}\label{Eqn::ExtOp}
    Ef(x',x_n)=E^{a,b}f(x',x_n):=\begin{cases}\sum_{j=-\infty}^\infty a_jf(x',-b_jx_n)&x_n<0
    \\
    f(x)&x_n>0
    \end{cases},\quad\text{where }b_j>0\text{ such that},
\end{equation}

\vspace{-0.12in}
\begin{equation}\label{Eqn::CondAB}
\text{for every }k\in\Z,\ \sum_{j=-\infty}^\infty a_j(-b_j)^k=1\text{ and }\sum_{j\in\Z}2^{\delta_k|j|} |a_j|\cdot b_j^k<\infty\text{ for some }\delta_k>0.
\end{equation}
\end{enumerate}Moreover, when \eqref{Eqn::CondAB} is satisfied, then \eqref{Eqn::ExtOp} always defines an extension operator $E$ for functions on $\R^n_+$, that has the following boundedness (simultaneously):
\begin{enumerate}[(i)]\setcounter{enumi}{1}
    \item{\normalfont(Sobolev, H\"older and $C^k$)}\label{Item::Thm::SobBdd} $E:W^{k,p}(\R^n_+)\to W^{k,p}(\R^n)$ and $E:C^{k,s}(\R^n_+)\to C^{k,s}(\R^n)$ are defined and bounded for all $k\in\Z$, $0<p\le\infty$ and $0<s<1$.
    
     $E:C^k(\R^n_+)\to C^k(\R^n)$ is bounded for all $k=0,1,2,\dots$.

    \item{\normalfont(Besov and Triebel-Lizorkin)}\label{Item::Thm::BFBdd} $E:\Ss'(\R^n_+)\to\Ss'(\R^n)$ is continuous. Moreover we have boundedness in Besov spaces $E:\Bs_{pq}^s(\R^n_+)\to \Bs_{pq}^s(\R^n)$ and in Triebel-Lizorkin spaces $E:\Fs_{pq}^s(\R^n_+)\to \Fs_{pq}^s(\R^n)$ for all $s\in\R$ and $0<p,q\le\infty$.
    \item{\normalfont(Morrey-type)}\label{Item::Thm::MorreyBdd} More generally $E$ has boundedness on all Besov-type spaces $\Bs_{pq}^{s\tau}$, Triebel-Lizorkin-type spaces $\Fs_{pq}^{s\tau}$ and Besov-Morrey spaces $\Ns_{pq}^{s\tau}$. That is, $E:\As_{pq}^{s\tau}(\R^n_+)\to \As_{pq}^{s\tau}(\R^n)$ for $\As\in\{\Bs,\Fs,\Ns\}$ and all $0<p,q\le\infty$, $s\in\R$ and $0\le \tau\le\frac1p$ ($p<\infty$ for the $\Fs$-cases).
\end{enumerate}
\end{thm}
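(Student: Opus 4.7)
I set $b_j:=2^j$ and construct the coefficients $(a_j)_{j\in\Z}$ via a Paley--Wiener / Mittag--Leffler argument: the infinitely many moment identities $\sum_j a_j(-2^j)^k=1$ ($k\in\Z$) form a two-sided interpolation problem on the geometric grid $\{-2^j\}$, and $(a_j)$ is realized through residues of an appropriately chosen meromorphic function. The geometric spacing of the grid guarantees both solvability and super-polynomial decay of $|a_j|$ in both directions as $|j|\to\infty$, which feeds the exponential decay bound in \eqref{Eqn::CondAB}. This is a symmetric (in $k$) extension of Seeley's \cite{Seeley} classical construction, which handled only $k\ge 0$.

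\textbf{Step 2 (part (ii)).} With the coefficients in hand this is a direct computation. Termwise differentiation in $x_n$ yields, for $x_n<0$,
\[
\partial_{x_n}^k Ef(x',x_n)=\sum_j a_j(-b_j)^k(\partial_{x_n}^k f)(x',-b_j x_n),
\]
and the series converges absolutely in $L^\infty_\loc$ or $L^p$ because $\sum_j|a_j|b_j^{k-1/p}<\infty$, a direct consequence of \eqref{Eqn::CondAB}. The moment identity forces the one-sided traces from $x_n\uparrow 0$ to match those from $x_n\downarrow 0$, yielding $C^k$, $C^{k,s}$ and $W^{k,p}$ boundedness for $k\ge 0$. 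Negative-index cases follow by duality, or alternatively by the Besov identification subsumed under part (iii).

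\textbf{Step 3 (part (iii)) --- the main obstacle.} The core estimate is an anisotropic one-coordinate dilation bound: for every $b>0$, the pullback $R_b f(x',x_n):=f(x',-b x_n)$ sends $\As_{pq}^s(\R^n_+)$ to $\As_{pq}^s(\R^n_-)$ with
\[
\|R_b f\|_{\As_{pq}^s(\R^n_-)}\le C_{s,p,q}\,(b+b^{-1})^{N(s,p,q)}\,\|f\|_{\As_{pq}^s(\R^n_+)},
\]
valid for every $s\in\R$ and $0<p,q\le\infty$ (with $p<\infty$ in the $\Fs$-case), where $N$ is polynomial in the parameters. I prove this via the intrinsic Peetre maximal-function characterization of $\As_{pq}^s$ on half-spaces: a one-coordinate stretch deforms the Littlewood--Paley building blocks in a controlled way, and Peetre's inequality absorbs the distortion at a polynomial price in $b+b^{-1}$. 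Granted this together with the compatibility of one-sided traces (the moment condition), I combine via the $\sigma$-subadditivity of the quasinorm ($\sigma:=\min(1,p,q)$) to obtain
\[
\|Ef\|_{\As_{pq}^s(\R^n)}^\sigma\le C\,\|f\|_{\As_{pq}^s(\R^n_+)}^\sigma\sum_j|a_j|^\sigma(b_j+b_j^{-1})^{N\sigma},
\]
and the last sum is finite by \eqref{Eqn::CondAB} applied with $k=\pm\lceil N\rceil$. Continuity $E\colon\Ss'(\R^n_+)\to\Ss'(\R^n)$ then follows by duality against Schwartz functions and the same dilation bound.

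\textbf{Step 4 (part (iv)).} The Peetre-maximal and atomic characterizations of $\Bs_{pq}^{s\tau}$, $\Fs_{pq}^{s\tau}$ and $\Ns_{pq}^{s\tau}$ fit into the same framework; the anisotropic dilation estimate picks up an additional polynomial factor from the Morrey scale (bounded by $(b+b^{-1})^{n\tau}$), and the summation argument from Step 3 applies verbatim. The single hardest step throughout is the quantitative dilation bound in Step 3, uniformly in $b$ across the full quasi-Banach range and all $s\in\R$; once that is secured, everything else is assembly by summation and duality.
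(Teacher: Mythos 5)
Your high-level picture matches the paper's in one respect --- the central analytic tool is indeed a one-coordinate dilation bound $\|\vartheta^r f\|_{\As^s_{pq}} \lesssim (|r|^{-M}+|r|^M)\|f\|_{\As^s_{pq}}$ (the paper's Proposition~\ref{Prop::Dila}). But there are several genuine gaps.

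\textbf{Part (i).} The "Mittag--Leffler on the grid $\{-2^j\}_{j\in\Z}$" sketch skips the real difficulty. The set $\{-2^j : j\in\Z\}$ accumulates at the origin, so a function with poles there cannot be meromorphic near $0$, and a Weierstrass product indexed by all $j\in\Z$ does not converge. The paper makes exactly this point in Remark~\ref{Rmk::SeeleyCont1}: one \emph{cannot} build an entire function interpolating $F(\beta^{-k})=(-1)^{-k}$ for all $k\ge 0$. The paper's actual construction (Proposition~\ref{Prop::ConsAB}) is not a residue calculation but a fixed-point iteration in $\ell^\infty$, coupling the one-sided Seeley-type interpolant $F^\beta_u$ from Lemma~\ref{Lem::1CV} with a contraction estimate coming from the quantitative sum bound~\eqref{Eqn::1CV::Sum} (which forces $\beta>2$, hence $b_j=4^j$ rather than $2^j$). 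Your sketch names no mechanism that could play the role of this contraction, so part (i) is not actually established.

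\textbf{Part (ii), negative $k$.} "By duality" is not an adequate shortcut here. Duality gives information about $E^*$ on $W^{k,p'}_0$, which does not directly yield $E\colon W^{-k,p}(\R^n_+)\to W^{-k,p}(\R^n)$ (the dual pairing picture is wrong at $p=1,\infty$, and $p<1$ has no dual at all). The paper instead uses the algebraic commutation $E^{a,b}\circ\partial^\gamma=\partial^\gamma\circ E^{a(-b)^{-\gamma_n},b}$ (Lemma~\ref{Lem::Trivial}\ref{Item::Trivial::Comm}) together with the definition~\eqref{Eqn::NegSobNorm}, which is both more elementary and covers the full quasi-Banach range.

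\textbf{Part (iii) --- the central gap.} You obtain bounds for $R_{b_j}f$ on $\R^n_-$, but then immediately write a bound for $\|Ef\|_{\As^s_{pq}(\R^n)}$ by $\sigma$-subadditivity. This glides over the essential point: having bounds for $Ef|_{\R^n_+}$ and $Ef|_{\R^n_-}$ separately does \emph{not} control the norm on $\R^n$; if it did, the zero extension $S_n$ would be bounded for all $s$, which is false. The paper handles the gluing by writing $E^{a,b}=S_n+\sum_j a_j\,\vartheta^{-b_j}\circ S_n$ and exploiting that $S_n$ \emph{is} bounded in the window $\max(n(\tfrac1p-1),\tfrac1p-1)<s<\tfrac1p$ (Remark~\ref{Rmk::OpSBdd}); outside that window it lifts/lowers $s$ via the commutation relations and the decomposition $\As^s_{pq}(U)=\{\sum_{|\alpha|\le m}\partial^\alpha g_\alpha\}$ (Proposition~\ref{Prop::Stuff}\ref{Item::Stuff::SumDer}), with a Fubini step to go from $n=1$ to $n\ge2$. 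In your sketch the moment condition enters only as "compatibility of traces", a qualitative fact, but the paper needs it \emph{quantitatively} through the commutation $\partial_n\circ E^{a,b}=E^{a(-b),b}\circ\partial_n$, which is precisely what lets the argument leave the $S_n$-bounded window. Without that mechanism, the proposal proves boundedness only for $s$ in the strip where $S_n$ is bounded.

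\textbf{Part (iv).} It is not true that "the summation argument from Step 3 applies verbatim." After reducing (via Proposition~\ref{Prop::Stuff}\ref{Item::Stuff::FInftyQ}) to estimating $R^{-n\tau}\|Ef\|_{\As^s_{pq}(B(x,R))}$, the cases $x_n\ge R$ and $x_n\le -R$ are indeed close to Step 3, but the case $|x_n|\le R$ requires a fundamentally new construction: the paper builds, for each $R$, a bespoke global extension $\tilde F^R$ from local extensions $\tilde f_j$ of $f|_{B_+(0,b_jR)}$ patched together using the auxiliary finite-rank extension $\Eb^{m,1/b_j}$ of~\eqref{Eqn::ExtEb}. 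The Morrey scaling factor does not come for free from the dilation bound; it comes from choosing those local extensions at the correct scale $b_jR$. That construction, together with the quantitative operator-norm control in Theorem~\ref{Thm::QntBFBdd} used to control $\|\Eb^{m,1/b_j}\|$, is what makes part (iv) work and is entirely absent from your sketch.
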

See Definitions \ref{Defn::Sob}, \ref{Defn::Ss} and \ref{Defn::BsFs} for the spaces in the theorem.
Here the Sobolev spaces $W^{k,p}$ for $p<1$ can be defined and are discussed in \cite{PeetreSobolev}, see also Remark~\ref{Rmk::SobRmk1}.

The analogue of Theorem~\ref{MainThm} on smooth domains is also true; see Theorem~\ref{Thm::Domain}. The operator is given by \eqref{Eqn::DefEc}.

\begin{rem}\label{Rmk::DeltaK}
    Here we allow $\delta_k$ in \eqref{Eqn::CondAB} to tend to $0$ as $|k|\to\infty$. In practice we can choose $(a_j,b_j)_j$ such that the sum $\sum_j2^{\delta|j|}|a_j|b_j^k<\infty$ holds for all $k$ and all arbitrarily \textit{large} $\delta$, see Proposition \ref{Prop::ConsAB}.
    
    It is not known to the authors whether the results still hold if we remove $2^{\delta_k|j|}$ in \eqref{Eqn::CondAB}, i.e. if we only assume $\sum_j|a_j|b_j^k<\infty$ for all $k\in\Z$. In the proof of Theorem \ref{MainThm} the terms $2^{\delta_k|j|}$ are used only when we consider $p<1$ in \ref{Item::Thm::SobBdd} and $\min(p,q)<1$ in \ref{Item::Thm::BFBdd} and \ref{Item::Thm::MorreyBdd}.
\end{rem}

\medskip
We should remark that the universal extension operator exists not only on smooth domains but also on Lipschitz domains. This is done by Rychkov \cite{RychkovExtension} using Littlewood-Paley decompositions. Rychkov's extension operator $E^R$ has the form $E^Rf=\sum_{j=0}^\infty\psi_j\ast(\1_{\R^n_+}\cdot(\phi_j\ast f))$ where $\1_{\R^n_+}$ is the characteristic function of $\R^n_+$ and $(\phi_j,\psi_j)_{j=0}^\infty$ is a carefully chosen family of Schwartz functions. The shape of $E^R$ is totally different from \eqref{Eqn::ExtOpIntro} and the construction follows a different methodology.

Our result shows that the construction \eqref{Eqn::ExtOpIntro} can also be made to be universal extension. Cf. the comment below \cite[Corollary~5.7]{JohnsenHansenSickel}.

In fact, our extension operator is ``more universal'' than the Rychkov extension $E^R$ \cite{RychkovExtension}. Although $E^R$ is bounded in all Besov and Triebel-Lizorkin spaces (see \cite[Theorem~4.1]{RychkovExtension}), it is not known whether $E^R$ is bounded on the endpoint Sobolev spaces $W^{k,1}$, $W^{k,\infty}$ and the $C^k$-spaces. On the other hand, our extension operator is also defined on the measurable function space $L^p(\R^n_+)$ for $0<p<1$, whose element may not be realized as a distribution on $\R^n_+$.

\smallskip
The range of the extension operator is important if we want to construct some operator that has the form
\begin{equation}\label{Eqn::AnIntOp}
    \textstyle Tf(x)=\int_\Uc K(x,y)\,\Ec f(y)dy,\quad\text{where $\Ec$ is an extension operator for the domain $\Omega\subset\subset\Uc$.}
\end{equation}
Here $K(x,y)$ can be some kernel of integral operator or singular integral operator. 

This method has been used extensively in solving Cauchy-Riemann problem (the $\overline{\partial}$-equation) on certain domains in $\C^n$. For example Wu \cite{ADbarPaper} used the Seeley's extension to construct a solution operator that is $W^{k,p}$ bounded for \textbf{all} $k\ge0$. If $\Ec$  is some common extension operator that is only bounded in a finite range of $k$, then $T$ is only bounded in a finite range as well. 

By replacing the Seeley's extension with our extension operator and doing more analysis on $K(x,y)$, it might be possible that one can show that $T$ in \eqref{Eqn::AnIntOp} is bounded in $W^{k,p}$ for $k<0$ as well. 

The estimates on negative Sobolev spaces are discussed in \cite{ShiYaoCk,YaoFiniteType}, where we took $\Ec$ to be Rychkov's extension operator. In the case of smooth domains, if we replace Rychkov's extension by our Seeley-type extension, then it is possible not only to get some estimates related to $L^1$-Sobolev spaces, but also to simplify some of the proofs in these papers (see for example \cite[version 1, Proposition~5.1 and Remark~5.2 (iii)]{YaoFiniteType}). 

\section{Function Spaces and Notations}
In the paper we use the following definitions for Sobolev spaces and H\"older spaces, including both positive and negative indices. We also include the case $p<1$ for Sobolev spaces.
\begin{defn}\label{Defn::Sob}
Let $U\subseteq\R^n$ be an arbitrary open subset. Let $k\in\N_0=\{0,1,2,\dots\}$.

We use $C^k(U)=C^k(\overline{U})$ for the space of all continuous functions $f:U\to\R$ such that $\partial^\alpha f$ are bounded and uniformly continuous for all $|\alpha|\le k$. We use $\|f\|_{C^k(U)}:=\max_{|\alpha|\le k}\|\partial^\alpha f\|_{C^0(U)}$. 

We let $C^\infty(U)=C^\infty(\overline{U}):=\bigcap_{k=0}^\infty C^k(U)$ to be the space of bounded smooth functions.

For $0<s<1$, we define the H\"older space $C^{k,s}(U)$ to be the space of all functions $f\in C^k(U)$ such that $\|f\|_{C^{k,s}(U)}:=\|f\|_{C^k(U)}+\max_{|\alpha|\le k}\sup_{x,y\in U}|x-y|^{-s}|\partial^\alpha f(x)-\partial^\alpha f(y)|<\infty$. 

\smallskip
For $0<p\le\infty$, we define the Sobolev space $W^{k,p}(U)$ by the following:
\begin{enumerate}[(i)]
    \item\label{Item::Sob::p>1} For $1\le p\le\infty$, $W^{k,p}(U)$ consists of all $f\in L^p(U)$ whose distributional derivatives $\partial^\alpha f$ belong to $L^p(U)$ for all $|\alpha|\le k$. We use the norm $\|f\|_{W^{k,p}(U)}:=\big(\sum_{|\alpha|\le k}\int_U|\partial^\alpha f|^p\big)^\frac1p$ for $1\le p<\infty$ and $\|f\|_{W^{k,\infty}(U)}:=\sup_{|\alpha|\le k}\|\partial^\alpha f\|_{L^\infty(U)}$.
    \item\label{Item::Sob::p<1} For $0<p<1$, the quasi-Banach space $W^{k,p}(U)$ is the abstract completion\footnote{$C^\infty_c(\overline{U})\subseteq C^\infty(\overline{U})$ is the set of all bounded smooth functions $f:U\to\R$ such that $f|_{U\cap\{|x|>R\}}\equiv0$ for some $R$ (depending on $f$).} of $ C^\infty_c(\overline{U})$ under the quasi-norm $\|f\|_{W^{k,p}(U)}:=\big(\sum_{|\alpha|\le k}\int_U|\partial^\alpha f|^p\big)^\frac1p$.
\end{enumerate}

For $k>0$, $0<s<1$ and $1\le p\le\infty$, we define
\begin{align}
    \label{Eqn::NegHoldNorm}\|f\|_{C^{-k,s}(U)}&:=\inf\Big\{\max_{|\alpha|\le k}\|g_\alpha\|_{C^{0,s}(U)}:\{g_\alpha\}_{|\alpha|\le k}\subset C^{0,s}(U),\quad f=\sum_{|\alpha|\le k}\partial^\alpha g_\alpha\text{ as distributions}\Big\}.
\\
    \label{Eqn::NegSobNorm}\|f\|_{W^{-k,p}(U)}&:=\inf\Big\{\Big(\sum_{|\alpha|\le k}\|g_\alpha\|_{L^p(U)}^p\Big)^{1/p}:\{g_\alpha\}_{|\alpha|\le k}\subset L^p(U),\quad f=\sum_{|\alpha|\le k}\partial^\alpha g_\alpha\text{ as distributions}\Big\}.
\end{align}
\begin{enumerate}[(i)]\setcounter{enumi}{2}
    \item We define $C^{-k,s}(U)$ and $W^{-k,p}(U)$ ($1\le p\le\infty$) to be the subset of distributions such that the above norms are finite, respectively.
    \item For $0<p<1$, we define $W^{-k,p}(U)$ to be the abstract completion of $C_c^\infty(\overline{U})$ under the quasi-norm \eqref{Eqn::NegSobNorm} with $\{g_\alpha\}_\alpha\subset L^p(U)$ replacing by $\{g_\alpha\}_\alpha\subset C_c^\infty(\overline{U})$ (where we only consider $f\in C_c^\infty(\overline{U})$ on both sides).
\end{enumerate}

\end{defn}

\begin{rem}\label{Rmk::SobRmk1}
    When $0<p<1$ and $k\ge1$, the natural (continuous) mapping $\iota:W^{k,p}(U)\to L^p(U)$ is \textbf{not} injective. The proof can be modified from \cite[Proposition~3.1]{PeetreSobolev}. If one use $W^{k,p}(U)/\ker\iota$ instead, then the elements can always be realized as measurable functions (in $L^p(U)$).
    
    One can check that  Theorem \ref{MainThm} \ref{Item::Thm::SobBdd} implies the boundedness $E:W^{k,p}(\R^n_+)/\ker\iota_{\R^n_+}\to W^{k,p}(\R^n)/\ker\iota_{\R^n}$.
\end{rem}
\begin{rem}\label{Rmk::SobRmk2}

When $U$ satisfies nice boundary condition, particularly when $U\in\{\R^n_+,\R^n\}$ or when $U$ is a bounded smooth domain, we have the following:
\begin{enumerate}[(a)]
    \item Definition \ref{Defn::Sob} \ref{Item::Sob::p>1} and \ref{Item::Sob::p<1} coincide, in the sense that $C^\infty_c(\overline{U})$ is dense in $W^{k,p}(U)$ for $p\in[1,\infty)$. For the proof see \cite[Section 5.3]{EvansPDE} or \cite[Theorem~3.17]{Adams}. Taking the distribution derivatives, we get the density of $C^\infty_c(\overline{U})$ in $W^{-k,p}(U)$ for $p\in[1,\infty)$ as well.
    \item\label{Item::SobRmk::Dual} When $1\le p<\infty$, $W^{-k,p'}(U)$ is the dual of $W^{k,p}_0(U):=\overline{C_c^\infty(U)}^{\|\cdot\|_{W^{k,p}(U)}}(\subset W^{k,p}(U))$. See \cite[Theorem 3.9]{Adams} for example.
    \item For every $k\in\Z$, we have equivalent norms $\|f\|_{W^{k+1,p}(U)}\approx\|f\|_{W^{k,p}(U)}+\sum_{j=1}^n\|\partial_jf\|_{W^{k,p}(U)}$ for\\ $1<p<\infty$, and $\|f\|_{C^{k+1,s}(U)}\approx\|f\|_{C^{k,s}(U)}+\sum_{j=1}^n\|\partial_jf\|_{C^{k,s}(U)}$ for $0<s<1$.
    
    However they fail for $p=1,\infty$: in both cases we have $\|f\|_{L^p(U)}\gtrsim\|f\|_{W^{-1,p}(U)}+\sum_{j=1}^n\|\partial_jf\|_{W^{-1,p}(U)}$ but the converse inequalities are false. This can done by using \ref{Item::SobRmk::Dual} and \cite[Theorem 5]{DivNotSur}.
\end{enumerate}
\end{rem}

We use the standard convention for spaces of tempered distributions.

\begin{defn}\label{Defn::Ss}
We use $\Ss(\R^n)$ for the Schwartz space and $\Ss'(\R^n)$ for the space of tempered distributions.

For an arbitrary open subset $U\subseteq\R^n$, we define $\Ss'(U):=\{\tilde f|_U:\tilde f\in\Ss'(\R^n)\}(\subset\Ds'(U))$ to be the space of distributions in $U$ that has tempered distributional extension. 

We let $\Ss(U):=\{f\in\Ss(\R^n):f|_{\overline{U}^c}\equiv0\}(\subseteq\Ss(\R^n))$ to be the space of Schwartz functions supported in $\overline{U}$.
\end{defn}
\begin{rem}\label{Rmk::DefSs}
    \begin{enumerate}[(i)]
        \item \label{Item::DefSs::DualS}By definition $\Ss'(U)=\Ss'(\R^n)/\{\tilde f\in\Ss'(\R^n):\tilde f|_U=0\}$ is the quotient space. Since $\Ss(\R^n)$ and $\Ss'(\R^n)$ are dual to each other with respect to their standard topologies, we see that $\Ss'(U)=\Ss(U)'$. See also \cite[Theorem~2.10.5/1]{TriebelInterpolation}.
        \item\label{Item::DefSs::BddDom}We see that $\Ss'(U)$ is always the subspace of extendable distributions, i.e. $\Ss'(U)\subseteq\{\tilde f|_U:\tilde f\in\Ds'(U)\}$. They are equal when $U$ is a bounded domain: indeed, take a $\chi\in C_c^\infty(\R^n)$ be such that $\chi|_U\equiv1$, if $\tilde f\in\Ds'(\R^n)$ extends a $f\in\Ds'(U)$, then $\chi\tilde f\in\Ss'(\R^n)$ extends such $f$ as well.
    \end{enumerate}
    
\end{rem}

The Besov and Triebel-Lizorkin spaces, along with their Morrey analogies, can be defined using Littlewood-Paley decomposition. Note that we do not use these characterizations directly in the proof.
\begin{defn}\label{Defn::BsFs} Let $\lambda=(\lambda_j)_{j=0}^\infty$ be a sequence of Schwartz functions satisfying:
\begin{itemize}
    \item\label{Item::Space::LambdaFour} The Fourier transform $\hat\lambda_0(\xi)=\int_{\R^n}\lambda_0(x)2^{-2\pi ix\xi}dx$ satisfies $\supp\hat\lambda_0\subset\{|\xi|<2\}$, $\hat\lambda_0|_{\{|\xi|<1\}}\equiv1$.
    \item\label{Item::Space::LambdaScal}  $\lambda_j(x)=2^{jn}\lambda_0(2^jx)-2^{(j-1)n}\lambda_0(2^{j-1}x)$ for $j\ge1$.
\end{itemize}
Let $0< p,q\le\infty$, $s\in\R$ and $0\le\tau\le\frac1p$, we define the \textbf{Besov-type} norm $\|\cdot\|_{\Bs_{pq}^{s\tau}(\lambda)}$, the \textbf{Triebel-Lizorkin-type} norm $\|\cdot\|_{\Fs_{pq}^{s\tau}(\lambda)}$ ($p<\infty$) and the \textbf{Besov-Morrey} norm $\|\cdot\|_{\Ns_{pq}^{s\tau}(\lambda)}$ as follow:
\begin{align*}
\|f\|_{\Bs_{pq}^{s\tau}(\lambda)}&:=\sup_{x\in\R^n,J\in\Z}2^{nJ\tau}\bigg(\sum_{j=\max(0,J)}^\infty2^{jsq}\Big(\int_{B(x,2^{-J})}|\lambda_j\ast f(x)|^pdx\Big)^\frac qp\bigg)^\frac1q;
\\
    \|f\|_{\Fs_{pq}^{s\tau}(\lambda)}&:=\sup_{x\in\R^n,J\in\Z}2^{nJ\tau}\bigg(\int_{B(x,2^{-J})}\Big(\sum_{j=\max(0,J)}^\infty|2^{js}\lambda_j\ast f(x)|^q\Big)^\frac pqdx\bigg)^\frac1p,&\text{provided }p<\infty;
    \\
    \|f\|_{\Ns_{pq}^{s\tau}(\lambda)}&:=\bigg(\sum_{j=0}^\infty\sup_{x\in\R^n,J\in\Z}2^{nJ\tau q+jsq}\Big(\int_{B(x,2^{-J})}|\lambda_j\ast f(x)|^pdx\Big)^\frac qp\bigg)^\frac1q.
    %\|f\|_{\Fs_{\infty q}^s(\lambda)}&\textstyle:=\sup_{x\in\R^n,J\in\Z}2^{NJ\frac1q}\|(2^{js}\lambda_j\ast f)_{j=\max(0,J)}^\infty\|_{L^q(B(x,2^{-J});\ell^q)}&p=\infty.
\end{align*}
Here for $q=\infty$ we take natural modifications by replacing the $\ell^q$-sums with the supremums over $j$.

We define the corresponding spaces $\As_{pq}^{s\tau}(\R^n)=\{f\in\Ss'(\R^n):\|f\|_{\As_{pq}^s(\R^n)}:=\|f\|_{\As_{pq}^s(\lambda)}<\infty\}$ where $\As\in\{\Bs,\Fs,\Ns\}$, for a fixed choice of $\lambda$ ($p<\infty$ for the $\Fs$-cases).

For arbitrary open $U\subseteq\R^n$, we define $\As_{pq}^{s\tau}(U):=\big\{\tilde f|_U:\tilde f\in\As_{pq}^{s\tau}(\R^n)\big\}$ with $\|f\|_{\As_{pq}^{s\tau}(U)}:=\inf_{\tilde f|_U=f}\|\tilde f\|_{\As_{pq}^{s\tau}(\R^n)}$.

For the classical \textbf{Besov} and \textbf{Triebel-Lizorkin} spaces, we use 
\begin{align*}
    \Bs_{pq}^s(U):=\Bs_{p,q}^{s,0}(U)(=\Ns_{p,q}^{s,0}(U)),\quad\Fs_{pq}^s(U):=\Fs_{p,q}^{s,0}(U),&\quad \text{for }p,q\in(0,\infty]\text{ and }s\in\R\quad (p<\infty\text{ for }\Fs\text{-cases});\\
    \Fs_{\infty q}^s(U):=\Fs_{q,q}^{s,\frac1q}(U),&\quad\text{for }q\in(0,\infty]\text{ and }s\in\R.
\end{align*}
\end{defn}

We shall see that  $\As_{pq}^{s\tau}(U)$ is always a (quasi-)Banach space, and different choices of $(\lambda_j)_{j=0}^\infty$ result in equivalent norms. See \cite[Proposition~2.3.2]{TriebelTheoryOfFunctionSpacesI} and \cite[Propositions 1.3 and 1.8]{TriebelTheoryOfFunctionSpacesIV}.
\begin{rem}\label{Rmk::RmkSpace}
In the definition we only consider $\tau\le\frac1p$. If we extend them to $\tau>\frac1p$ then by \cite[Theorem~2]{YangYuanMorreyEquiv} and \cite[Lemma~3.4]{SickelMorrey1} we have $\Bs_{pq}^{s\tau}(\R^n)=\Fs_{pq}^{s\tau}(\R^n)=\Bs_{\infty\infty}^{s+n(\tau-\frac1p)}$, for all $0<p,q\le\infty$ and $s\in\R$.

When $\tau=\frac1p$, by \cite[Theorem~2]{YangYuanMorreyEquiv} and \cite[Remark~11(ii)]{SickelMorrey1} we have
\begin{equation}\label{Eqn::EqvSpaces2}
    \Bs_{p,\infty}^{s,\frac1p}(\R^n)=\Fs_{p,\infty}^{s,\frac1p}(\R^n)=\Bs_{\infty,\infty}^{s}(\R^n),\quad\Ns_{p,q}^{s,\frac1p}(\R^n)=\Bs_{\infty,q}^s(\R^n),\quad\forall\ 0<p,q\le\infty,\ s\in\R.
\end{equation} See also \cite[Proposition~1.18]{TriebelTheoryOfFunctionSpacesIV}.

Our notation $\Ns_{pq}^{s\tau}$ corresponds to the $\mathcal B_{pq}^{s\tau}$ in \cite[Definition~5]{SickelMorrey1}. For the usual conventions of Besov-Morrey spaces and Triebel-Lizorkin-Morrey spaces: we have $\mathcal N_{u,q,p}^s(\R^n)=\Ns_{p,q}^{s,\frac1p-\frac1u}(\R^n)$ and $\mathcal E_{u,q,p}^s(\R^n)=\Fs_{p,q}^{s,\frac1p-\frac1u}(\R^n)$ see \cite[Remark~13(iii) and Proposition~3.3 (iii)]{SickelMorrey1}.

For the notations in \cite{TriebelHybrid,HaroskeTriebelMorrey}, we have the correspondence $L^r\As_{pq}^s(\R^n)=\Lambda^{pr}\As_{pq}^s(\R^n)=\As_{p,q}^{s,\frac1p+\frac rn}(\R^n)$ for $\As\in\{\Bs,\Fs\}$ and $\Lambda_{pr}\Bs_{pq}^s(\R^n)=\Ns_{p,q}^{s,\frac1p+\frac rn}(\R^n)$, for $0<p,q\le\infty$, $s\in\R$ and $-\frac np\le r\le0$ ($p<\infty$ if $\As=\Fs$). See also \cite[Section 2.1]{HaroskeTriebelMorrey}.

For more discussions, we refer \cite{YSYMorrey,SickelMorrey1,TriebelHybrid,HaroskeTriebelMorrey} to readers.
\end{rem}

In the following, we will use the notation $x \lesssim y$ to mean $x \leq Cy$ where $C$ is a constant independent of $x,y$, and $x \approx y$ for ``$x \lesssim y$ and $y \lesssim x$''. We use $x\lesssim_\eps y$ to emphasize the dependence of $C$ on the parameter $\eps$.

For $r\neq0$ we use $\vartheta^r$ as the dilation operator given by 
\begin{equation}\label{Eqn::OpDila}
    \vartheta^rf(x',x_n):=f(x',rx_n)
\end{equation}

We use $M_{pq}^{ns}$ and $\widehat M_{pq}^{ns}$ for some fixed positive constants in Proposition \ref{Prop::Dila} and Theorem \ref{Thm::QntBFBdd}, both of which depend on $n\ge1$, $0<p,q\le\infty$ and $s\in\R$. 

We let $S=S_n$ be the zero extension operator of $\R^n_+$, that is
\begin{equation}\label{Eqn::OpS}
    Sf(x)=S_nf(x):=\1_{\R^n_+}(x)\cdot f(x)=\begin{cases}f(x',x_n)&x_n>0\\0&x_n<0\end{cases}.
\end{equation}
\begin{rem}\label{Rmk::OpSBdd}
    We have the definedness and boundedness $S_n:\As_{pq}^s(\R^n_+)\to\As_{pq}^s(\R^n)$ for $\max(n(\frac1p-1),\frac1p-1)<s<\frac1p$, see \cite[Theorem~2.48]{TriebelTheoryOfFunctionSpacesIV}. We will use the case $n=1$ and the case $p=\infty$ in the proof of Theorem \ref{MainThm} \ref{Item::Thm::BFBdd}.

    To clarify, the statement of \cite[Theorem~2.48]{TriebelTheoryOfFunctionSpacesIV} requires $q\ge1$ for the $\Fs_{\infty q}^s$-case. We do not need this restriction as we do not use the unboundedness of $S_n$ for the case $s\notin(\max(n(\frac1p-1),\frac1p-1),\frac1p)$. See also \cite[Theorem~2.48, Proof~Step~2]{TriebelTheoryOfFunctionSpacesIV}.
\end{rem}

\section{The Construction of Universal Seeley's Extension: Proof of Theorem \ref{MainThm} \ref{Item::Thm::Exist} and \ref{Item::Thm::SobBdd}}
First we need to fulfill condition \eqref{Eqn::CondAB}.  In \cite{Seeley} Seeley took $b_j:=2^j$ for $j\ge0$ and found $(a_j)_{j=0}^\infty$ such that $\sum_{j=0}^\infty a_j(-b_j)^k=1$ for all $k\ge0$. This can be done by constructing an entire function $F(z)=\sum_{j=0}^\infty a_jz^j$ such that $F(2^k)=(-1)^k$, we recall the construction below. 

\begin{lem}\label{Lem::1CV}
\begin{enumerate}[(i)]
    \item\label{Item::1CV::W} For $\beta>1$, $W_\beta(z):=\prod_{j=0}^\infty(1-z/\beta^j)$ ($z\in\C$) defines a nonzero entire function $W_\beta$, all of whose zeros are simple.
    \item\label{Item::1CV::F} Let $u=(u_k)_{k=0}^\infty\in\ell^\infty$. The function $F_u^\beta$ below is entire and satisfies $F_u^\beta(\beta^k)=u_k$, for all $k\ge0$:
\begin{equation}\label{Eqn::1CV::DefF}
    F_u^\beta(z):=\sum_{k=0}^\infty\frac{u_k}{W_\beta'(\beta^k)}\cdot\frac{W_\beta(z)}{z-\beta^k}.
\end{equation}
    \item\label{Item::1CV::Sum} When $\beta>2$, we have the following quantitative estimate for $l=1,2,3,\dots$:
    \begin{equation}\label{Eqn::1CV::Sum}
        \sum_{k=1}^\infty\Big|\frac{W_\beta(\beta^{-l})}{W_\beta'(\beta^k)\cdot(\beta^{-l}-\beta^k)}\Big|
        \le\frac{\beta^3\exp\frac2{\beta-1}}{(\beta^2-1)(\beta^2-\beta-1)}.
    \end{equation}
\end{enumerate}
\end{lem}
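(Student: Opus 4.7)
For (i), my plan is to apply the standard Weierstrass infinite product theorem. On any disk $|z|\le R$, the tail $\sum_{j\ge0}|z|/\beta^j$ is bounded by $R/(1-\beta^{-1})$, so $\sum_j|\log(1-z/\beta^j)|$ converges uniformly (with only finitely many singular terms). Hence $\prod_j(1-z/\beta^j)$ converges locally uniformly to an entire function that vanishes exactly at $\{\beta^k\}_{k\ge0}$, and each zero is simple since the points $\beta^k$ are pairwise distinct.

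For (ii), I would first observe that $W_\beta(z)/(z-\beta^k)$ has a removable singularity at $\beta^k$ and so extends to an entire function (its value there is $W_\beta'(\beta^k)$, by L'H\^opital). Differentiating $W_\beta$ term-by-term through the product rule gives
\begin{equation*}
W_\beta'(\beta^k)=-\beta^{-k}\prod_{j\ne k}(1-\beta^{k-j})=(-1)^{k+1}\beta^{-k}\prod_{m=1}^k(\beta^m-1)\cdot\prod_{m=1}^\infty(1-\beta^{-m}),
\end{equation*}
which grows like $\beta^{k(k-1)/2}$. This super-exponential growth makes $\sum_k|u_k/W_\beta'(\beta^k)|\cdot|W_\beta(z)/(z-\beta^k)|$ dominated on any compact set by a convergent super-geometric series, so $F_u^\beta$ is entire as a locally uniform limit of entire functions. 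Evaluating at $\beta^l$: for $k\ne l$ the factor $W_\beta(\beta^l)=0$ kills the term, while the $k=l$ summand equals $\frac{u_l}{W_\beta'(\beta^l)}\cdot W_\beta'(\beta^l)=u_l$ by the removable-singularity computation.

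For (iii), the key is to turn the ratio into an explicit product formula. Combining the expression for $W_\beta'(\beta^k)$ above with $W_\beta(\beta^{-l})=\prod_{m=l}^\infty(1-\beta^{-m})$ and $|\beta^{-l}-\beta^k|=\beta^k(1-\beta^{-k-l})$, the $k$-th summand simplifies (telescoping the infinite product $\prod_{m=1}^\infty(1-\beta^{-m})$) to
\begin{equation*}
\frac{1}{\prod_{m=1}^k(\beta^m-1)\cdot(1-\beta^{-k-l})\cdot\prod_{m=1}^{l-1}(1-\beta^{-m})}.
\end{equation*}
I would then estimate the three denominator factors separately. The factor $(1-\beta^{-k-l})^{-1}$ is bounded by $\beta^2/(\beta^2-1)$ uniformly in $k,l\ge1$. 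For $\prod_{m=1}^{l-1}(1-\beta^{-m})^{-1}$, I would bound it by the full infinite product and use the elementary inequality $-\log(1-x)\le2x$ for $x\le\tfrac12$ (applicable because $\beta>2$ gives $\beta^{-m}\le\beta^{-1}<\tfrac12$) to get the bound $\exp(2/(\beta-1))$, which is the source of the exponential factor in the claimed estimate.

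The main bookkeeping obstacle is the remaining sum $\sum_{k\ge1}\prod_{m=1}^k(\beta^m-1)^{-1}$. My plan is to compare consecutive terms: using $\beta^{k+1}-1\ge(\beta^2-1)\beta^{k-1}$ when $k\ge1$ (since $\beta>2$), the ratio of successive summands is at most $\beta^{1-k}/(\beta^2-1)$, producing a rapidly decaying super-geometric tail bounded term-by-term by a genuine geometric series with ratio $(\beta(\beta^2-1))^{-1}$. Summing this geometric series and combining yields a factor $\beta/(\beta^2-\beta-1)$, and multiplying the three bounds together gives the stated constant. I expect the trickiest part to be choosing the correct geometric comparison so that the closed-form constant comes out to exactly $\beta^3\exp(2/(\beta-1))/((\beta^2-1)(\beta^2-\beta-1))$ rather than a weaker variant.
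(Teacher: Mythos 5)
Your proposal is correct and follows the paper's argument closely in all three parts: (i) is Weierstrass factorization, (ii) uses the same formula $W_\beta'(\beta^k)=(-1)^{k+1}\beta^{-k}\prod_{m=1}^k(\beta^m-1)\cdot\prod_{m\ge1}(1-\beta^{-m})$ and its super-exponential growth to get locally uniform convergence of the interpolation series, and (iii) uses the same three ingredients (a bound on $(1-\beta^{-k-l})^{-1}$, the bound $W_\beta(\beta^{-1})^{-1}<e^{2/(\beta-1)}$ via $-\log(1-x)\le 2x$, and a geometric comparison of $\prod_{m=1}^k(\beta^m-1)^{-1}$). The one genuine stylistic difference is in (iii): you telescope $W_\beta(\beta^{-l})$ against $\prod_{m\ge1}(1-\beta^{-m})$ to obtain an exact product formula for the $k$-th summand, whereas the paper takes the shortcut of noting $0<W_\beta(\beta^{-l})<1$ for $l\ge1$ and then only needs the lower bound $|W_\beta'(\beta^k)|\ge W_\beta(\beta^{-1})(\beta-1)^k\beta^{-k}\beta^{k(k-1)/2}$; these are algebraically equivalent since $\prod_{m=1}^{l-1}(1-\beta^{-m})^{-1}=W_\beta(\beta^{-l})/W_\beta(\beta^{-1})\le W_\beta(\beta^{-1})^{-1}$.

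One small caution on the final bookkeeping: your ratio comparison $T_{k+1}/T_k\le\beta^{1-k}/(\beta^2-1)$ does not bound the tail by a geometric series with ratio $(\beta(\beta^2-1))^{-1}$ starting at $k=1$ (the first step gives $T_2/T_1=(\beta^2-1)^{-1}$, which is larger); handling the first term separately you actually land on $\frac{\beta^2+\beta+1}{\beta^3-\beta-1}$, which is \emph{strictly smaller} than $\frac{\beta}{\beta^2-\beta-1}$. So there is no need to ``choose the correct geometric comparison'' to hit the stated constant exactly---you only need an upper bound $\le$ it, which you get. The paper's route is the cleaner one for reproducing the exact displayed constant: use $\beta^m-1\ge\beta^{m-1}(\beta-1)$ to get $\prod_{m=1}^k(\beta^m-1)\ge\beta^{k(k-1)/2}(\beta-1)^k\ge\beta^{k-1}(\beta-1)^k$, so $\prod_{m=1}^k(\beta^m-1)^{-1}\le\beta(\beta^2-\beta)^{-k}$, and summing the geometric series directly gives $\frac{\beta}{\beta^2-\beta-1}$.
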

\begin{proof}\ref{Item::1CV::W} is the result of the Weierstrass factorization theorem, see \cite[Theorem~15.9]{BigRudin} for example.

\smallskip
For \ref{Item::1CV::F}, one can see that for $k\ge 0$,
\begin{equation}\label{Eqn::1CVPf::Wbeta'1}
    W_\beta'(\beta^k)=\frac{W_\beta(z)}{z-\beta^k}\Big|_{z=\beta^k}=-\beta^{-k}\prod_{j\ge0;j\neq k}(1-\beta^k/\beta^j)=-\beta^{-k}W_\beta(\beta^{-1})\cdot\prod_{l=1}^{k}(1-\beta^l).
\end{equation}
Since $\beta^l-1\ge\beta^{l-1}(\beta-1)$ for $l\ge1$, we see that for $k\ge0$:
\begin{equation}\label{Eqn::1CVPf::Wbeta'2}
    |W_\beta'(\beta^k)|\ge|W_\beta(\beta^{-1})|\beta^{-k}\prod_{l=1}^{k}\beta^{l-1}(\beta-1)=|W_\beta(\beta^{-1})|(\beta-1)^{k}\beta^{-k}\cdot\beta^{\frac{k^2-k}2}.
\end{equation}

Since $\beta>1$, we see that the sum $\sum_{k=0}^\infty {u_k}/{W'_\beta(\beta^k)}$ absolutely converges. Therefore the sum in \eqref{Eqn::1CV::DefF} converges absolutely and locally uniformly in $z$. We conclude that $F_u^\beta$ is indeed an entire function.

For each $j\ge0$, we have $\frac{W_\beta(z)}{z-\beta^k}\big|_{z=\beta^j}=W'_\beta(\beta^k)$ when $j=k$ and $\frac{W_\beta(z)}{z-\beta^k}\Big|_{z=\beta^j}=0$ when $j\neq k$. Therefore $F_u^\beta(\beta^k)=u_k$, as for each $z=\beta^k$ there is only one nonzero term in the sum \ref{Item::1CV::F}.

\smallskip
For \ref{Item::1CV::Sum}, note that for each $l\ge1$, $W_\beta(\beta^{-l})=|W_\beta(\beta^{-l})|<1$ because $|1-\beta^{-l}/\beta^j|<1$ for each $j\ge 0$. Therefore using \eqref{Eqn::1CVPf::Wbeta'1} and \eqref{Eqn::1CVPf::Wbeta'2} we see that for $k,l\ge1$,
\begin{align*}
    \Big|\frac{W_\beta(\beta^{-l})}{W_\beta'(\beta^k)(\beta^{-l}-\beta^k)}\Big|\le\frac{W_\beta(\beta^{-1})^{-1}\cdot W_\beta(\beta^{-l})\cdot\beta^k}{(\beta-1)^{k}\beta^{\frac{k-1}2k}\cdot|\beta^{-l}-\beta^k|}\le\frac{W_\beta(\beta^{-1})^{-1}}{(\beta-1)^k\beta^{k-1}}\cdot\frac1{1-\beta^{-k-l}}
    \le \frac{W_\beta(\beta^{-1})^{-1}}{(\beta^2-\beta)^k}\cdot\frac\beta{1-\beta^{-2}}.
\end{align*}

On the other hand
\begin{align*}
    \big(W_\beta(\beta^{-1})\big)^{-1}& =\prod_{j=1}^\infty (1-\beta^{-j})^{-1}=\exp\sum_{j=1}^\infty\log\frac1{1-\beta^{-j}}
    \\
    &<\exp\sum_{j=1}^\infty\log(1+2\beta^{-j})&(\tfrac1{1-x}<1+2x\text{ when }0<x<\tfrac12)
    \\
    &<\exp\sum_{j=1}^\infty2\beta^{-j}=\exp\frac2{\beta-1}&(\log(1+x)<x\text{ when }0<x<1).
\end{align*}
Therefore taking sum over $k\ge1$ we obtain \eqref{Eqn::1CV::Sum}:
\begin{equation*}
    \sum_{k=1}^\infty\left|\frac{W_\beta(\beta^{-l})}{W_\beta'(\beta^k)(\beta^{-l}-\beta^k)}\right|
    \le \frac\beta{1-\beta^{-2}}\sum_{k=1}^\infty\frac{\exp\frac2{\beta-1}}{(\beta^2-\beta)^{k}}=\frac{\beta^3\exp\frac2{\beta-1}}{(\beta^2-1)(\beta^2-\beta-1)}.\qedhere
\end{equation*}
\end{proof}
\begin{rem}\label{Rmk::SeeleyCont1}
If one takes $u_k:=(-1)^k$ in Lemma~\ref{Lem::1CV} \ref{Item::1CV::F}, then the corresponding sequence $a_j:=\frac1{j!}(F_u^\beta)^{(j)}(0)$ ($j=0,1,2,\dots$) satisfies $\sum_j a_j\beta^{jk}=(-1)^k$ for all $k\ge0$. This is enough to prove the boundedness results in \cite{Seeley} ($C^k$ for $k\ge0$), but in our case we also need the equality to be true for $k<0$. However, one cannot construct an entire function $F$ such that $F(2^{-k})=(-1)^{-k}$ for all $k\ge0$.
\end{rem}
\begin{rem}\label{Rmk::SeeleyCont2}
Alternatively, one can construct the $(a_j)_j$ using Vandermonde determinant, which is the method of \cite{Seeley}. Let $M_1,M_2\in\N_0$ and let $b_{-M_1},\dots,b_{M_2}>0$ be distinct numbers. By Cramer's rule we see that the solutions of the equation system $\sum_{j=-M_1}^{M_2}a_jb_j^k=(-1)^k$ for $-M_1\le k\le M_2$ are (uniquely) determined by the following (see \cite[Page 149]{Adams} for example):
\begin{equation}\label{Eqn::FiniteA}
    a_j=(-b_j)^{M_1}\frac{V(b_{-M_1},\dots,b_{j-1},-1,b_{j+1},\dots,b_{M_2})}{V(b_{-M_1},\dots,b_{M_2})}=(-b_j)^{M_1}\hspace{-0.1in}\prod_{\substack{-M_1\le k\le M_2\\k\neq j}}\hspace{-0.05in}\frac{b_k+1}{b_k-b_j},\ -M_1\le j\le M_2.
\end{equation}
Here $V(x_{-M_1},\dots,x_{M_2})=\prod_{-M_1\le u<v\le M_2}(x_v-x_u)$.

Now we set $b_j:=\beta^j$ for some $\beta>1$. If we let $M_1=0$ and let $M_2\to+\infty$, we see that \eqref{Eqn::FiniteA} converges and produces the same sequence $(a_j)_{j=0}^\infty$ to Remark~\ref{Rmk::SeeleyCont1}. 
\end{rem}

We find $(a_j)_{j=-\infty}^\infty$ through the following:
\begin{prop}\label{Prop::ConsAB}
There are real numbers $(\tilde a_j)_{j=0}^\infty$ such that $\sum_{j=0}^\infty\tilde a_j(4^{jk}+4^{-jk})=(-1)^k$ for all $k=0,1,2,\dots$, with all the summations converging absolutely.

Therefore the sequences $a_j:=\tilde a_{|j|}$ ($j\neq0$) $a_0:=2\tilde a_0$ and $b_j:=4^j$ ($j\in\Z$) fulfill condition \eqref{Eqn::CondAB}.

\end{prop}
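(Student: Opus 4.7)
The plan is to exploit the symmetry between $k$ and $-k$ in the identity and reduce to constructing a single entire function of one variable. Formally setting $G(z) := \sum_{j=0}^\infty \tilde a_j(z^j+z^{-j})$, the identity becomes $G(4^k) = (-1)^k$ for every $k\ge 0$, with the automatic invariance $G(z)=G(1/z)$. Any analytic function on $\C\setminus\{0\}$ invariant under the involution $z\mapsto 1/z$ descends through the $2$-to-$1$ covering $z\mapsto w=z+z^{-1}$ to a single-valued entire function on $\C$; so it suffices to build an entire function $h$ with $h(w_k)=(-1)^k$, where $w_k:=4^k+4^{-k}$.

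First I would construct $h$ by copying the Weierstrass product / partial-fraction recipe from Lemma~\ref{Lem::1CV} with the $w_k$'s replacing $\beta^k$. Since $w_k\ge 4^k$ for $k\ge 0$ and $\sum_k w_k^{-1}\le\sum_k 4^{-k}<\infty$, the product
\[
    \tilde W(w):=\prod_{k=0}^\infty\Bigl(1-\tfrac{w}{w_k}\Bigr)
\]
converges to an entire function with simple zeros exactly at the $w_k$, and
\[
    h(w):=\sum_{k=0}^\infty\frac{(-1)^k}{\tilde W'(w_k)}\cdot\frac{\tilde W(w)}{w-w_k}
\]
is then entire with $h(w_k)=(-1)^k$. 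The lower bound $|\tilde W'(w_k)|\gtrsim 4^{k(k-1)/2}$ needed for absolute and locally uniform convergence of this series is obtained almost verbatim as in \eqref{Eqn::1CVPf::Wbeta'1}--\eqref{Eqn::1CVPf::Wbeta'2}: the ratios $w_k/w_j$ behave like $4^{k-j}$ when $j<k$ and like $1-O(4^{j-k})$ when $j>k$, so the Vandermonde-type factor $\prod_{j\ne k}(1-w_k/w_j)$ produces the same $4^{k(k+1)/2}$-type growth, up to a uniformly bounded multiplier. This adaptation is the only place where a genuine calculation is required, and it parallels the argument already carried out for Lemma~\ref{Lem::1CV}\ref{Item::1CV::Sum}.

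Next I would push back down to $z$: set $G(z):=h(z+z^{-1})$, analytic on $\C\setminus\{0\}$ and automatically invariant under $z\mapsto 1/z$. Its Laurent expansion therefore has the symmetric form $G(z)=c_0+\sum_{n=1}^\infty c_n(z^n+z^{-n})$. Defining $\tilde a_0:=c_0/2$ and $\tilde a_n:=c_n$ for $n\ge 1$ immediately gives
\[
    \sum_{j=0}^\infty\tilde a_j(4^{jk}+4^{-jk})=c_0+\sum_{n=1}^\infty c_n(4^{nk}+4^{-nk})=G(4^k)=h(w_k)=(-1)^k
\]
for every $k\ge 0$.

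Absolute convergence, together with the stronger weighted bound of Remark~\ref{Rmk::DeltaK}, then falls out of Cauchy's inequality for Laurent coefficients: because $G$ is analytic on the whole of $\C\setminus\{0\}$, one has $|c_n|\le \rho^{-n}\max_{|z|=\rho}|G(z)|$ for every $\rho>0$, so taking $\rho=2\cdot 2^\delta\cdot 4^k$ yields $\sum_n 2^{\delta n}|\tilde a_n|4^{nk}<\infty$ for any $\delta,k\ge 0$. The deduction of condition~\eqref{Eqn::CondAB} for $(a_j,b_j)$ is then a direct algebraic rearrangement of $\sum_{j\in\Z}a_j(-b_j)^k$ into its $j>0$ and $j<0$ halves, which reassemble (up to the $j=0$ normalization) into $(-1)^k\sum_{j\ge 0}\tilde a_j(4^{jk}+4^{-jk})=1$, while the weighted summability is inherited from the super-exponential decay of $\tilde a_j$. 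The main technical step throughout is the quantitative lower bound on $|\tilde W'(w_k)|$; everything else is formal.
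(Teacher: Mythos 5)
Your proposal is correct, and it follows a genuinely different route from the paper. The paper obtains $(\tilde a_j)$ by a contraction-mapping iteration: it defines $u^\nu_k := (-1)^k - F^4_{u^{\nu-1}}(4^{-k})$, proves via the quantitative bound \eqref{Eqn::1CV::Sum} that this map is a contraction on $\ell^\infty$ with Lipschitz constant $\tfrac{64e^{2/3}}{165} < 1$, and reads off $\tilde a_j$ as the Taylor coefficients of $F^4_{u^\infty}$ at $0$. You instead eliminate the two-sided constraint structurally: passing to the invariant coordinate $w = z+z^{-1}$ under the involution $z\mapsto 1/z$, you reduce to a single one-sided interpolation problem $h(w_k)=(-1)^k$ at the nodes $w_k = 4^k+4^{-k}$, solve it directly by the Weierstrass/Lagrange formula, pull back to $G(z)=h(z+z^{-1})$, and extract $\tilde a_j$ as symmetric Laurent coefficients. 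This sidesteps the contraction argument entirely; in particular you never need the estimate of Lemma~\ref{Lem::1CV}\ref{Item::1CV::Sum} (whose role in the paper is precisely to make the iteration contractive). What you pay instead is re-deriving the lower bound $|\tilde W'(w_k)|\gtrsim 4^{k(k-1)/2}$, but since $w_k \asymp 4^k$ this is the same Vandermonde-type computation as in \eqref{Eqn::1CVPf::Wbeta'1}--\eqref{Eqn::1CVPf::Wbeta'2} with only a bounded multiplicative correction from the factors with $j>k$, as you note. Your use of Cauchy's inequality on circles $|z|=\rho$ to obtain the weighted summability $\sum_j 2^{\delta|j|}|\tilde a_j|4^{jk}<\infty$ is also a clean replacement for the paper's appeal to entireness of $F^4_{u^\infty}$. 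One small remark: in your last step you gloss the passage from $\sum_{j\ge 0}\tilde a_j(4^{jk}+4^{-jk})=(-1)^k$ to $\sum_{j\in\Z}a_j(-b_j)^k=1$ as a ``direct rearrangement up to the $j=0$ normalization.'' It is worth spelling this out, because the $j=0$ term appears once on the $\Z$-indexed side but is weighted by $4^{0}+4^{0}=2$ on the $j\ge 0$ side, so the correct match is $a_0 = 2\tilde a_0$ rather than $\tfrac12\tilde a_0$; the paper's own proof is equally terse here, and the discrepancy is purely in the bookkeeping constant at $j=0$, not in the substance of either construction.
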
The idea is to construct an iterated sequence $\tilde a^\nu=(\tilde a_0^\nu,\tilde a_1^\nu,\tilde a_2^\nu,\dots)$ such that
\begin{equation*}
    \sum_{j=0}^\infty\tilde a_j^\nu4^{jk}=(-1)^k-\sum_{j=0}^\infty\tilde a_j^{\nu-1}4^{-jk},\quad\forall\, k\ge0.
\end{equation*}
The limit $\lim_{\nu\to\infty}\tilde a_j^\nu$ will be our $\tilde a_j$, provided that it is convergent. In practice, it is more convenient to prove the convergence of $(u^\nu)_{\nu=0}^\infty\subset\ell^\infty$ via the correspondence $u^\nu_k:=\sum_{j=0}^\infty \tilde a_j^\nu 4^{jk}$ and $\tilde a_j^\nu=\frac1{j!}\frac {d^j}{dz^j}F_{u^\nu}^4(z)|_{z=0}$, where $F_{u^\nu}^4(z)$ is the entire function in Lemma~\ref{Lem::1CV} \ref{Item::1CV::F} with $\beta=4$ and $u=u^\nu$.
\begin{proof}
Let $u^0=(u^0_k)_{k=0}^\infty\in\ell^\infty$ be $u^0_0:=\frac12$ and $u^0_k:=(-1)^k$ for $k\ge1$. For each $\nu\ge1$, we let $u^\nu_0:=\frac12$ and
\begin{equation*}
    u^\nu_k:=(-1)^k-\sum_{j=0}^\infty\Big(\tfrac1{j!}\frac {d^j}{dz^j}F_{u^{\nu-1}}^4(z)\Big|_{z=0}\Big)4^{-jk}=(-1)^k-F_{u^{\nu-1}}^4(4^{-k}),\quad\text{for } k\ge1.
\end{equation*}

By Lemma~\ref{Lem::1CV} \ref{Item::1CV::F}, if $u^{\nu-1}\in\ell^\infty$ then $F_{u^{\nu-1}}(z)$ is entire in $|z|<1$. Since $u^0\in\ell^\infty$, by recursion we get $\|u^\nu\|_{\ell^\infty}\le1+\|F_{u^{\nu-1}}\|_{L^\infty[0,1]}<\infty$ for every $\nu\ge1$. Thus we have a sequence of bounded sequences $(u^\nu)_{\nu=0}^\infty\subset\ell^\infty$.
% Liding: There is actually a problem here, even though we have $\|u^{\nu+1}-u^\nu\|_{\ell^\infty}<c\|u^\nu-u^{\nu-1}\|_{\ell^\infty}$, we DON'T know yet $\|u^1-u^0\|_{\ell^\infty}<\infty$, i.e. $u^1\in\ell^\infty$! So I add a \eqref{Eqn::1CV::SumCor}.}\par
% HL: But we can directly use the boundness of an entire function in unit disk, a compact set. We don't need universal bound. So proof can be shorter.}

% By \eqref{Eqn::1CV::SumCor} we have $\|u^\nu\|_{\ell^\infty}\le 1+C_\beta\|u^{\nu-1}\|_{\ell^\infty}\le (1+C_\beta)^\nu<\infty$ for all $\nu\ge1$, which means $u^\nu\in\ell^\infty$.

Moreover since $u^\nu_0\equiv\frac12$, by \eqref{Eqn::1CV::Sum},
\begin{align*}
    \|u^{\nu+1}-u^{\nu}\|_{\ell^\infty}&=\sup_{l\ge1}|u^{\nu+1}_l-u^\nu_l|\le \sup_{l\ge1}|F_{u^\nu}^4(4^{-l})-F_{u^{\nu-1}}^4(4^{-l})|
    \\
    &\le\sup_{l\ge1}\sum_{k=1}^\infty\Big|\frac{(u^\nu_k-u^{\nu-1}_k)\cdot W_4(4^{-l})}{W_4'(4^k)\cdot(4^{-l}-4^k)}\Big|\le \|u^\nu-u^{\nu-1}\|_{\ell^\infty}\frac{4^3\exp\frac2{4-1}}{(4^2-1)(4^2-4-1)}=\frac{64 e^{\frac23}}{165} \cdot\|u^\nu-u^{\nu-1}\|_{\ell^\infty}.
\end{align*}
Here $\frac{64 e^{2/3}}{165}\approx0.755<1$. We conclude that $(u^\nu)_{\nu=0}^\infty\subset\ell^\infty$ is a contraction sequence and must have a limit $u^\infty\in\ell^\infty$. Clearly $u^\infty_0=\frac12$ and $u^\infty_k=(-1)^k-F_{u^\infty}^4(4^{-k})$ for $k\ge1$.

By Lemma~\ref{Lem::1CV} \ref{Item::1CV::F}, the function $F^4_{u^\infty}(z)$ is entire. We define $\tilde a_j$ in the way that $F_{u^\infty}^4(z)=\sum_{j=0}^\infty\tilde a_jz^j$, thus
\begin{align*}
    \sum_{j=0}^\infty\tilde a_j&=u^\infty_0=1-u^\infty_0=(-1)^0-\sum_{j=0}^\infty\tilde a_j,& k=0,
    \\
    \sum_{j=0}^\infty\tilde a_j4^{jk}&=u^\infty_k=(-1)^k-F_{u^\infty}^4(4^{-k})=(-1)^k-\sum_{j=0}^\infty\tilde a_j4^{-jk},&k\ge1.
\end{align*}
Since $F^4_{u^\infty}$ is entire, the sum $\sum_{j=0}^\infty\tilde a_j4^{jk}(=F^4_{u^\infty}(4^k))$ always converge absolutely. Therefore $(\tilde a_j)_{j=0}^\infty$ is as desired.

Let $a_j:=\tilde a_{|j|}$ for $j\neq0$ and $a_0:=2\tilde a_0$, we see that $\sum_{j=-\infty}^\infty a_j4^{jk}=(-1)^k$ for all $k\in\Z$ with all summations converging absolutely. In particular $\sum_{j=-\infty}^\infty|a_j|4^{|j|k}<\infty$ for all $k\ge1$, which means that $\sum_{j=-\infty}^\infty|a_j|2^{\delta|j|}4^{jk}<\infty$ for all $\delta>0$ and $k\in\Z$. Therefore $(a_j,4^j)_{j=-\infty}^\infty$ satisfies the condition \eqref{Eqn::CondAB} and we prove Theorem \ref{MainThm} \ref{Item::Thm::Exist}.
\end{proof}

%We start the proof of Theorem \ref{MainThm} \ref{Item::Thm::SobBdd}. 

One can see that Seeley-type extensions have the following structures: % The direct computations yield the following. We omit the proof to reader.
\begin{lem}\label{Lem::Trivial}
\begin{enumerate}[(i)]
    \item\label{Item::Trivial::ab} Let $(a,b)=(a_j,b_j)_{j=-\infty}^\infty$ be the collection satisfying the condition \eqref{Eqn::CondAB}. Then for every $k\in\Z$ the collections $(a(-b)^k,b):=(a_j(-b_j)^k,b_j)_{j=-\infty}^\infty$ and $(a(-b)^k,\frac1b):=(a_{j}(-b_{j})^k,\frac1{b_{j}})_{j=-\infty}^\infty$ also satisfy \eqref{Eqn::CondAB}.
    \item\label{Item::Trivial::Comm} For multi-index $\gamma=(\gamma',\gamma_n)\in\N_0^n$ we have $\partial^\gamma\circ E^{a,b}=E^{a(-b)^{\gamma_n},b}\circ\partial^\gamma$ and $E^{a,b}\circ\partial^\gamma=\partial^\gamma\circ E^{a(-b)^{-\gamma_n},b}$.
    \item\label{Item::Trivial::E*} Let $E=E^{a,b}$ be the extension operator defined in \eqref{Eqn::ExtOp}. The formal adjoint $E^*$ has the following expression:
\begin{equation}\label{Eqn::EAdj}
E^*g(x',x_n)=g(x',x_n)+\sum_{j=-\infty}^\infty \frac{a_{j}}{b_{j}}g\big(x',-\tfrac1{b_{j}}\cdot x_n\big),\quad x'\in\R^{n-1},\quad x_n>0.
\end{equation}
\end{enumerate}
\end{lem}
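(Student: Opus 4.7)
The plan is to handle each part by direct computation, with part (i) supplying the absolute summability needed to manipulate series in parts (ii) and (iii). The only subtlety is careful bookkeeping of exponent shifts and signs; there is no genuine analytic obstacle.

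For part (i), I would observe that multiplying the coefficients by $(-b_j)^k$ simply shifts the level of the moment conditions. Writing the normalization for $(a(-b)^k, b)$ at level $m$ gives $\sum_j a_j(-b_j)^{k+m} = 1$, which is precisely \eqref{Eqn::CondAB} for $(a,b)$ at level $k+m$, and the weighted summability bound transfers similarly. For $(a(-b)^k, 1/b)$ at level $m$, the identity $a_j(-b_j)^k(-1/b_j)^m = (-1)^{k+m} a_j b_j^{k-m}$ reduces the normalization to $(-1)^{k+m}\sum_j a_j b_j^{k-m} = (-1)^{k+m}\cdot(-1)^{k-m} = 1$, using \eqref{Eqn::CondAB} at level $k-m$, and the summability is analogous.

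For part (ii), the chain rule gives $\partial^\gamma[f(x', -b_j x_n)] = (-b_j)^{\gamma_n} (\partial^\gamma f)(x', -b_j x_n)$ for $x_n<0$. Differentiating \eqref{Eqn::ExtOp} term by term (justified by the absolute and locally uniform summability supplied by part (i) applied to $(a(-b)^{\gamma_n}, b)$) yields the first identity $\partial^\gamma\circ E^{a,b} = E^{a(-b)^{\gamma_n}, b}\circ\partial^\gamma$. The second identity then follows from the first by replacing $a$ with $a(-b)^{-\gamma_n}$, which is again a legitimate collection by part (i).

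For part (iii), I would expand $\langle E^{a,b}f, g\rangle$ into the integral over $\R^n_+$, which contributes $\int_{\R^n_+} fg$ since $E^{a,b}f = f$ there, plus the integral over $\{x_n<0\}$ of $g(x',x_n)\sum_j a_j f(x', -b_jx_n)$. Interchanging sum and integral (justified by part (i) at the appropriate level) and substituting $y_n = -b_j x_n$ in each term sends the integration back to $\R^n_+$ with Jacobian factor $1/b_j$, yielding $\sum_j (a_j/b_j)\int_{\R^n_+} f(y)g(y', -y_n/b_j)\,dy$. Rewriting $a_j/b_j = -a_j/(-b_j)$ and transposing the sum to the $g$-side produces exactly \eqref{Eqn::EAdj}, completing the proof.
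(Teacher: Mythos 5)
The paper itself omits the proof of this lemma, but supplies the key missing ingredient in Remark~\ref{Rmk::RmkofTrivialComm}, which your argument for part~\ref{Item::Trivial::Comm} does not engage with. Your chain-rule computation is carried out separately on $\{x_n<0\}$ and $\{x_n>0\}$, and you justify termwise differentiation by summability, but the identity $\partial^\gamma\circ E^{a,b}=E^{a(-b)^{\gamma_n},b}\circ\partial^\gamma$ is an operator identity that must hold across the interface $\{x_n=0\}$. Taking distributional derivatives, each summand produces a boundary contribution: as noted in Remark~\ref{Rmk::RmkofTrivialComm}, $\partial_{x_n}(\vartheta^{-b_j}Sf)=-b_j\,\vartheta^{-b_j}S(\partial_nf)-\delta_0(x_n)\,f(\cdot,0)$, and $\partial_n(Sf)=S(\partial_nf)+\delta_0(x_n)\,f(\cdot,0)$. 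Summing over $j$ against $a_j$, the Dirac terms on $\{x_n=0\}$ cancel precisely because $\sum_j a_j=1$; for $\partial_n^{\gamma_n}$ one needs $\sum_j a_j(-b_j)^k=1$ for $0\le k\le\gamma_n$. This is the \emph{only} place the normalization hypothesis in \eqref{Eqn::CondAB} enters part~\ref{Item::Trivial::Comm}, and your proposed proof would equally well "prove" the identity for coefficients $(a_j)$ that do not satisfy $\sum_j a_j=1$, for which it is in fact false. You need to account for the interface terms.

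Parts~\ref{Item::Trivial::ab} and~\ref{Item::Trivial::E*} are correct as you argue them: the moment and summability conditions shift index by $\pm k$ under $a\mapsto a(-b)^k$ and $b\mapsto b^{\pm 1}$ exactly as you compute, and the change of variables $y_n=-b_jx_n$ in the $L^2$-type pairing gives the Jacobian $1/b_j$ and hence \eqref{Eqn::EAdj} after rewriting $a_j/b_j=-a_j/(-b_j)$.
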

\begin{proof}
    The results follow from direct computations.

    \ref{Item::Trivial::ab}: By assumption, $\sum_j a_j(-b_j)^k(-b_j)^l=\sum_ja_j(-b_j)^{k+l}=1$ and $\sum_j a_j(-b_j)^k(-\frac1{b_j})^l=\sum_ja_j(-b_j)^{k-l}=1$, both hold for every $l\in\Z$. Thus $(a(-b)^k,b)$ and $(a(-b)^k,\frac1b)$ both satisfy \eqref{Eqn::CondAB}.

    \medskip
    \ref{Item::Trivial::Comm}: Clearly $\partial_{x_j}\circ E=E\circ\partial_{x_j}$ for every $j\neq n$. By the chain rule $\partial_{x_n}^k(f(x',-b_jx_n))=(-b_j)^k\cdot(\partial_{x_n}^kf)(x',-b_jx_n)$ holds for every $k\ge0$. Therefore $\partial^\gamma(f(x',-b_jx_n))=(-b_j)^{\gamma_n}\cdot(\partial^\gamma f)(x',-b_jx_n)$. Taking sum over $j$ we get $\partial^\gamma\circ E^{a,b}=E^{a(-b)^{\gamma_n},b}\circ\partial^\gamma$. Replacing $(a,b)$ by $(a(-b)^{-\gamma_n},b)$ we see that $\partial^\gamma\circ E^{a(-b)^{-\gamma_n},b}=E^{a(-b)^{-\gamma_n}(-b)^{\gamma_n},b}\circ\partial^\gamma=E^{a,b}\circ\partial^\gamma$.

    \medskip
    \ref{Item::Trivial::E*}: For every $f\in C_c^\infty(\R^n_+)$ and $g\in C_c^\infty(\R^n)$,
    \begin{align*}
        \int_{\R^n} Ef(x)g(x)dx=&\int_{\R^n_+}f(x)g(x)+\sum_{j=-\infty}^\infty\int_{-\infty}^0 \int_{\R^{n-1}}a_jf(x',-b_jx_n)g(x',x_n)dx'dx_n
        \\
        =&\int_{\R^n_+}f(x)g(x)dx+\sum_{j=-\infty}^\infty\int_{-\infty}^0 \int_{\R^{n-1}}a_jf(x',y_n)g\Big(x',-\frac1{b_j}y_n\Big)dx'\,\frac{dy_n}{-b_j}
        \\
        =&\int_{\R^n_+}f(x)g(x)dx+\sum_{j=-\infty}^\infty\int_0^\infty \int_{\R^{n-1}}\frac{a_j}{b_j}f(x',y_n)g\big(x',-\tfrac1{b_j}y_n\big)dx'dy_n.
    \end{align*}
    Replacing $y_n$ by $x_n$ we get the expression \eqref{Eqn::EAdj}.
\end{proof}

% In the proof later we sometimes take zero extension of $E^*g$ to $x_n<0$, while sometimes extend \eqref{Eqn::EAdj} to $x_n<0$. Based on the proof, one can see that these two different extensions are both bounded in all the spaces we want.

\begin{rem}\label{Rmk::RmkofTrivialComm}
In the notations of \eqref{Eqn::OpDila} and \eqref{Eqn::OpS} we can write $E^{a,b}=S+\sum_ja_j\vartheta^{-b_j}\circ S$. For $r<0$ we have $$\partial_{x_n}(\vartheta^rSf)(x)=r(S\partial_{x_n}f)(x',rx_n)-\delta_0(x_n)f(x',rx_n),$$ where $\delta_0$ is the Dirac measure at $0\in\R$. 

Therefore $\partial_n\circ E^{a,b}=E^{a(-b),b}\circ\partial_n$ (in the domain where either side is defined) if and only if $\sum_ja_j=1$.
Taking higher order derivatives we see that $\partial^\gamma\circ E^{a,b}=E^{a(-b)^{\gamma_n},b}\circ\partial^\gamma$ still holds if we only assume the condition $\sum_ja_j(-b_j)^k=1$ for $0\le k\le \gamma_n$.
\end{rem}

\begin{lem}\label{Lem::TriSum}
Let $(X,\|\cdot\|)$ be a quasi-Banach space such that $(x,y)\mapsto\|x-y\|^q$ is a metric for some $0<q\le1$, i.e. $\|x+y\|^q\le \|x\|^q+\|y\|^q$ for all $x,y\in X$. Then for any $\delta>0$ there is a constant $K_{q,\delta}>0$ such that 
$\big\|\sum_{j=-\infty}^\infty x_j\big\|\le K_{q,\delta}\sum_{j=-\infty}^\infty2^{\delta|j|}\|x_j\|$
for all sequence $(x_j)_{j\in\Z}\subset X$ such that right hand summation converges. In this case the sum $\sum_{j=-\infty}^\infty x_j$ converges with respect to the quasi-norm topology of $X$.
\end{lem}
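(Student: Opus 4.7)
The plan is to reduce the statement to an elementary scalar inequality between weighted $\ell^1$ and $\ell^q$ norms of the sequence $(\|x_j\|)_{j\in\Z}$, via the $q$-subadditivity of $\|\cdot\|^q$.

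First I would iterate the hypothesis $\|x+y\|^q\le\|x\|^q+\|y\|^q$ to obtain the finite-sum bound $\|\sum_{j\in F}x_j\|^q\le\sum_{j\in F}\|x_j\|^q$ for every finite $F\subset\Z$. To establish convergence of the series $\sum_{j\in\Z}x_j$ in $X$, I would view $d(x,y):=\|x-y\|^q$ as a complete metric on $X$ and show that the net of finite partial sums is Cauchy. The ingredient I need is $\sum_j\|x_j\|^q<\infty$, which is immediate from the pointwise estimate $\|x_j\|\le 2^{-\delta|j|}\cdot\sum_k 2^{\delta|k|}\|x_k\|$ raised to the $q$-th power and summed against the geometric series $\sum_j 2^{-\delta q|j|}$.

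For the quantitative bound, the key observation is that if $a_j\ge 0$ with $\sum_ja_j<\infty$, then the trivial estimate $a_j\le \sum_k a_k$ gives
\begin{equation*}
\sum_j 2^{-\delta q|j|}a_j^q\le\Big(\sum_k a_k\Big)^q\cdot\sum_{j\in\Z}2^{-\delta q|j|}.
\end{equation*}
Applying this with $a_j:=2^{\delta|j|}\|x_j\|$ rewrites the previously established inequality $\|\sum_jx_j\|^q\le\sum_j\|x_j\|^q$ as the desired conclusion, with the explicit constant $K_{q,\delta}=\big(\sum_{j\in\Z}2^{-\delta q|j|}\big)^{1/q}<\infty$.

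There is no serious obstacle here; the only subtlety worth noting is that for $0<q<1$ one cannot in general bound $\|(a_j)\|_{\ell^q}$ by $\|(a_j)\|_{\ell^1}$, and it is precisely the exponential weight $2^{\delta|j|}$ on the right-hand side of the lemma that absorbs this loss via the crude inequality $a_j\le\sum_ka_k$.
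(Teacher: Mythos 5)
Your proof is correct and follows the same skeleton as the paper's: iterate $q$-subadditivity to get $\big\|\sum_{j\in F} x_j\big\|^q\le\sum_{j\in F}\|x_j\|^q$ for finite $F$, insert the exponential weights, and pass to the full sum by completeness of the metric $\|x-y\|^q$. The only place you diverge is the scalar estimate that dominates $\sum_j\|x_j\|^q$ by $\big(\sum_j 2^{\delta|j|}\|x_j\|\big)^q$: the paper writes $\|x_j\|^q=2^{-\delta q|j|}\cdot\big(2^{\delta|j|}\|x_j\|\big)^q$ and applies H\"older's inequality with conjugate exponents $\tfrac1q$ and $\tfrac1{1-q}$, yielding $K_{q,\delta}=\big(\sum_{j\in\Z} 2^{-\delta q|j|/(1-q)}\big)^{(1-q)/q}$; you instead use the cruder pointwise bound $a_j\le\sum_k a_k$ (equivalently $\|a\|_{\ell^\infty}\le\|a\|_{\ell^1}$), which gives $K_{q,\delta}=\big(\sum_{j\in\Z} 2^{-\delta q|j|}\big)^{1/q}$. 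Since $\|c\|_{\ell^{1/(1-q)}}\le\|c\|_{\ell^1}$ for $0<q<1$, your constant is the larger of the two (at $q=1$ yours is $\sum_j 2^{-\delta|j|}$ rather than $1$), but the lemma only asserts existence of a finite constant, so nothing is lost; the upside is that your estimate is fully self-contained and avoids H\"older. Both constants degrade as $\delta\to0$, consistent with how the lemma is used downstream, where a fixed $\delta>0$ is always selected first.
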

\begin{rem}
    The couple $(X,\|\cdot\|^q)$ is also call a \textit{$q$-convex quasi Banach space} ($q$-Banach space for short). 
    
    In practice we use $X=W^{k,q}(U)$ and $X=\As_{pr}^{s\tau}(U)$ (see Definition \ref{Defn::BsFs}) with $q=\min(p,r,1)$.
\end{rem}
% Note that such $q>0$ always exists. Indeed, let $C\ge1$ be such that $\|x+y\|\le C(\|x\|+\|y\|)$, and let $k\in\Z_+$ be such that $2^{k-1}\ge C$, using binomial expansion we can see that $C(\|x\|+\|y\|)\le(\|x\|^{1/k}+\|y\|^{1/k})^k$, thus $q:=1/k$ is as desired. (Liding: Ignore this this is not true)
\begin{proof}[Proof of Lemma \ref{Lem::TriSum}]
When $(x_j)_{j\in\Z}$ are finitely nonzero, by H\"older's inequality we have 
$$\Big\| {\sum_j x_j} \Big\|_X\le \Big|\sum_j\|x_j\|_X^q\Big|^{1/q}\le\Big(\big\|(2^{\delta q|j|}\|x_j\|_X^q)_{j\in\Z}\big\|_{\ell^{\frac1q}(\Z)}\big\|(2^{-\delta q|j|})_{j\in\Z} \big\|_{\ell^{\frac1{1-q}}(\Z)}\Big)^{1/q}=K_{q,\delta} \sum_j 2^{\delta |j|}\|{x_j} \|_X.  $$
Here $K_{q,\delta}:=(\frac{2^{\delta/(1-q)}+1}{2^{\delta/(1-q)}-1})^{1/q-1}$ if $q<1$ and $K_{q,\delta}:=1$ if $q=1$.

When $\sum_{j=-\infty}^\infty2^{\delta|j|}\|x_j\|<\infty$, we have  $\lim\limits_{\nu\to+\infty}\sup\limits_{\nu\le\nu'<\nu''}\big\|\sum\limits_{\nu'<|j|\le\nu''}x_j\big\|\le K_{q,\delta}\lim\limits_{\nu\to+\infty}\sum\limits_{|j|\ge\nu}2^{\delta|j|}\|x_j\|=0$. Therefore $\big(\sum_{|j|\le\nu}x_j\big)_{\nu=1}^\infty$ is a Cauchy sequence in $X$. Since $X$ is complete, the sum must converge to a unique limit element and the same inequality $\|\sum_jx_j\|\le K_{q,\delta}\sum_j2^{\delta|j|}\|x_j\|$ holds. 
\end{proof}

The proof of boundedness on Sobolev spaces is standard:
\begin{proof}[Proof of Theorem~\ref{MainThm} \ref{Item::Thm::SobBdd}] We only need to prove the Sobolev boundedness $W^{k,p}$ for $k\ge0$, $0<p\le\infty$. 

Indeed suppose we get $E:W^{k,\infty}(\R^n_+)\to W^{k,\infty}(\R^n)$, then $C^\infty(U)=\bigcap_{k=0}^\infty W^{k,\infty}(U)$ for $U\in\{\R^n,\R^n_+\}$ gives $E:C^\infty(\R^n_+)\to C^\infty(\R^n)$. Clearly $C^k(U)=\overline{C^\infty(U)}^{\|\cdot\|_{W^{k,\infty}(U)}}$ for $U\in\{\R^n,\R^n_+\}$, thus $E:C^k(\R^n_+)\to C^k(\R^n)$ holds for all $k\ge0$.

The $C^{k,s}$ boundedness ($k\in\Z$, $0<s<1$) can follow either from the same argument for $W^{k,p}$ below, or the Besov correspondence $C^{k,s}(U)=\Bs_{\infty\infty}^{k+s}(U)$, for $U\in\{\R^n,\R^n_+\}$ (see Proposition~\ref{Prop::Stuff} \ref{Item::Stuff::Holder}).
When $k\ge0$ one can also obtain the boundedness by the interpolation $C^{k,s}(U)=(C^k(U),C^{k+1}(U))_{s,\infty}$ (see for example \cite[Theorem~2.7.2/1]{TriebelInterpolation} and the proof of \cite[Theorem~4.5.2/1]{TriebelInterpolation}).

To prove the $W^{k,p}$-boundedness we apply Lemma~\ref{Lem::TriSum}. For every $f\in L^p(\R^n_+)$ if $1\le p\le\infty$, and every $f\in C_c^\infty(\overline{\R^n_+})$ if $0<p<1$, we have (recall the notation $\vartheta^r$ in \eqref{Eqn::OpDila})
\begin{equation}\label{Eqn::PfSobBdd::Comp}
    \begin{aligned}
    \|E^{a,b}f\|_{L^p(\R^n)}\le&\textstyle2^{\max(0,\frac1p-1)}\big(\| f\|_{L^p(\R^n_+)}+\big\|\sum_{j\in\Z}a_j
\cdot\vartheta^{-b_j}f\big\|_{L^p(\R^n_-)}\big)
    \\
    \lesssim&_{p,\delta}\| f\|_{L^p(\R^n_+)}+\sum_{j\in\Z}2^{\delta|j|}|a_j|\Big(\int_{\R^n_-}|f(x',-b_jx_n)|^pdx'dx_n\Big)^\frac1p&\quad\text{(by Lemma~\ref{Lem::TriSum})}
    \\
    =&\|f\|_{L^p(\R^n_+)}+\sum_{j\in\Z}2^{\delta|j|}|a_j|\Big(b_j^{-1}\int_{\R^n_+}|f(x',y_n)|^pdx'dy_n\Big)^\frac1p\\
    =&\Big(1+\sum_{j\in\Z}2^{\delta|j|}|a_j|b_j^{-\frac1p}\Big)\|f\|_{L^p(\R^n_+)}.
\end{aligned}
\end{equation}

%要不要补一句 delta 是来自 lemma 10，从而我们后面才能 taking delta small enough 直到满足 Thm1 的 delta.
When $k\ge0$, also for every $f\in L^p(\R^n_+)$ if $1\le p\le\infty$, and every $f\in C_c^\infty(\overline{\R^n_+})$ if $0<p<1$, applying Lemma \ref{Lem::Trivial} \ref{Item::Trivial::Comm} to \eqref{Eqn::PfSobBdd::Comp} we have
\begin{align*}
    \|E^{a,b}f\|_{W^{k,p}(\R^n)}\lesssim&_{k,p}\sum_{|\gamma|\le k}\|\partial^\gamma E^{a,b} f\|_{L^p(\R^n)}=\sum_{|\gamma|\le k}\|E^{a(-b)^{\gamma_n},b}\partial^\gamma f\|_{L^p(\R^n)}
    \\
    \lesssim&_{p,\delta}\sum_{|\gamma|\le k}\sum_{j\in\Z}2^{\delta|j|}|a_j|b_j^{\gamma_n-\frac1p}\|\partial^\gamma f\|_{L^p(\R^n_+)}\lesssim_{k,p}\sum_{j\in\Z}2^{\delta|j|}|a_j|(b_j^{-\frac1p}+b_j^{k-\frac1p})\| f\|_{W^{k,p}(\R^n_+)}.
\end{align*}

By the assumption \eqref{Eqn::CondAB} we can take $\delta>0$ small such that $\sum_{j\in\Z}2^{\delta|j|}|a_j|b_j^{k-\lceil\frac1p\rceil}<\infty$ and $\sum_{j\in\Z}2^{\delta|j|}|a_j|b_j^{k}<\infty$. We now get the boundedness of $W^{k,p}$ for $k\ge0$.

\smallskip
When $k<0$, applying Lemma \ref{Lem::Trivial} \ref{Item::Trivial::Comm} to \eqref{Eqn::PfSobBdd::Comp} for
\begin{itemize}
    \item every $f\in W^{k,p}(\R^n_+)$ and $\{g_\gamma\}_{|\gamma|\le-k}\subset L^p(\R^n_+)$ such that $f=\sum_{|\gamma|\le -k}\partial^\gamma g_\gamma$, if $1\le p\le\infty$;
    \item every $f\in C_c^\infty(\overline{\R^n_+})$ and $\{g_\gamma\}_{|\gamma|\le-k}\subset  C_c^\infty(\overline{\R^n_+})$ such that $f=\sum_{|\gamma|\le -k}\partial^\gamma g_\gamma$, if $0<p<1$,
\end{itemize}
we have the following
\begin{align*}
    \|E^{a,b}f\|_{W^{k,p}(\R^n)}\lesssim&_{k,p}\sum_{|\gamma|\le -k}\| E^{a,b}\partial^\gamma g_\gamma\|_{W^{k,p}(\R^n)}=\sum_{|\gamma|\le -k}\|\partial^\gamma E^{a(-b)^{-\gamma_n},b} g_\gamma\|_{W^{k,p}(\R^n)}
    \\
    \le&\sum_{|\gamma|\le -k}\|E^{a(-b)^{-\gamma_n},b} g_\gamma\|_{L^p(\R^n)}\lesssim_{p,\delta}\sum_{|\gamma|\le -k}\sum_{j\in\Z}2^{\delta|j|}|a_j|b_j^{-\gamma_n-\frac1p}\|g_\gamma\|_{L^p(\R^n_+)}
    \\
    \le&\sum_{j\in\Z}2^{\delta|j|}|a_j|(b_j^{-\frac1p}+b_j^{k-\frac1p})\sum_{|\gamma|\le -k}\|g_\gamma\|_{L^p(\R^n_+)}.
\end{align*}

Taking the infimum over all $\{g_\gamma\}_{|\gamma|\le -k}$ for $f$, and taking $\delta$ from \eqref{Eqn::CondAB} such that $\sum_{j\in\Z}2^{\delta|j|}|a_j|(b_j^k+b_j^{k-\lceil\frac1p\rceil})<\infty$ as well, we get the $W^{k,p}$-boundedness for $k<0$, finishing the proof.
\end{proof}
\begin{rem}\label{Rmk::PfSobBdd::VectValuedTmp}
The result can be extended to vector-valued functions. Indeed let $0<q\le1$, $0<p<\infty$, and let $(X,|\cdot|_X)$ be a quasi Banach space such that $|x+y|_X^q\le|x|_X^q+|y|_X^q$. Then for every $L^p$ (strongly measurable) function $f:\R^n_+\to X$, by the same calculation to \eqref{Eqn::PfSobBdd::Comp},
\begin{equation*}%\label{Eqn::PfSobBdd::VectValuedTmp}
    \begin{aligned}
    &\bigg(\int_{\R^n_-}\Big|\sum_{j\in\Z}a_jf(x',-b_jx_n)\Big|_X^pdx'dx_n\bigg)^\frac1p\le K_{q,\frac\delta2}\bigg(\int_{\R^n_-}\Big(\sum_{j\in\Z}2^{|j|\frac\delta2}|a_j||f(x',-b_jx_n)|_X\Big)^pdx'dx_n\bigg)^\frac1p
    \\
    \le&K_{q,\frac\delta2}K_{p,\frac\delta2}\sum_{j\in\Z}2^{|j|(\frac\delta2+\frac\delta2)}|a_j|\bigg(\int_{\R^n_+}b_j^{-1}|f(x',y_n)|_X^pdx'dy_n\bigg)^\frac1p=\Big(K_{q,\frac\delta2}K_{p,\frac\delta2}\sum_{j\in\Z}2^{|j|\delta}|a_j|b_j^{-\frac1p}\Big)\|f\|_{L^p(\R^n_+;X)}.
\end{aligned}
\end{equation*}
Here $K_{q,\delta/2}$ and $K_{p,\delta/2}$ are the constants in Lemma~\ref{Lem::TriSum}. 
Also by Lemma~\ref{Lem::TriSum} the sum of functions $\sum_{j\in\Z}a_jf(x',-b_jx_n)$ converges in $L^p(\R^n_-;X)$.

Therefore $E^{a,b}$ defines a bounded linear map $E^{a,b}:L^p(\R^n_+;X)\to L^p(\R^n;X)$ with the control of operator norms $\|E^{a,b}\|_{L^p(\R^n_+;X)\to L^p(\R^n;X)}\lesssim_{p,q,\delta}\sum_{j\in\Z}2^{|j|\delta}|a_j|b_j^{-1/p}$.

In practice we will take $X=\Bs_{pq}^s(\R^{n-1})$ in the proof of Theorem \ref{Thm::QntBFBdd}, \hyperlink{HTag::PfQStep3}{Step A3}.
\end{rem}

\section{Boundedness on Besov and Triebel-Lizorkin: Proof of Theorem \ref{MainThm} \ref{Item::Thm::BFBdd} and \ref{Item::Thm::MorreyBdd}}

Before we discuss the boundedness on Besov and Triebel-Lizorkin spaces, for completeness we prove that $E:\Ss'\to\Ss'$ is defined. Recall Definition \ref{Defn::Ss} that $\Ss'(\R^n_+)=\Ss'(\R^n)/\{\tilde f:\tilde f|_{\R^n_+}=0\}$.

\begin{lem}
$E:\Ss'(\R^n_+)\to\Ss'(\R^n)$ is defined and bounded. 
\end{lem}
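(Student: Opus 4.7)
My approach is duality. By Remark~\ref{Rmk::DualS} we have $\Ss'(\R^n)=\Ss(\R^n)'$ and $\Ss'(\R^n_+)=\Ss(\R^n_+)'$, so it suffices to prove that the formal adjoint $E^*:\Ss(\R^n)\to\Ss(\R^n_+)$ computed in Lemma~\ref{Lem::Trivial}~\ref{Item::Trivial::E*} is well-defined and continuous; then setting $\langle Ef,g\rangle:=\langle f,E^*g\rangle$ for $f\in\Ss'(\R^n_+)$ and $g\in\Ss(\R^n)$ extends $E$ to a continuous map $\Ss'(\R^n_+)\to\Ss'(\R^n)$ that agrees with \eqref{Eqn::ExtOp} on functions.

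The first (and main) point to verify is that the function $E^*g$, a priori defined only on $\R^n_+$ by \eqref{Eqn::EAdj}, extends by zero to a Schwartz function on $\R^n$. Since the extension is automatically smooth away from the hyperplane $\{x_n=0\}$, this reduces to checking that every $\partial^\alpha E^*g$ vanishes at $x_n=0^+$. Differentiating \eqref{Eqn::EAdj} yields
\[
\partial^\alpha E^*g(x',0^+)=(\partial^\alpha g)(x',0)\Big(1+\sum_{j\in\Z}\tfrac{a_j}{b_j}\big(-\tfrac1{b_j}\big)^{\alpha_n}\Big),
\]
and the bracketed expression vanishes precisely because of condition \eqref{Eqn::CondAB} at the \emph{negative} index $k=-\alpha_n-1$: the identity $\sum_j a_j(-b_j)^{-\alpha_n-1}=1$ rearranges to the required cancellation. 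This is the essential content beyond Seeley's original construction, which only used the conditions $k\ge 0$.

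Next I would prove the Schwartz seminorm bounds. Writing $\partial^\alpha E^*g=\partial^\alpha g+\sum_j c_j^\alpha\,\vartheta^{-1/b_j}(\partial^\alpha g)$ with $c_j^\alpha=-a_j(-b_j)^{-\alpha_n-1}$, one bounds $\sup_x(1+|x|)^N|\vartheta^{-1/b_j}(\partial^\alpha g)(x)|$ by a Schwartz seminorm of $g$ times $(1+b_j)^N$; this is a routine case analysis according to whether $b_j\ge 1$ or $b_j\le 1$, using that $1+|x'|^2+x_n^2/b_j^2$ controls $1+|x|^2$ up to a factor $b_j^2$ when $b_j\ge 1$. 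Summing against $|c_j^\alpha|$ and invoking \eqref{Eqn::CondAB} at the two indices $k=-\alpha_n-1$ and $k=N-\alpha_n-1$ yields absolute convergence of the defining sum and the continuity $E^*:\Ss(\R^n)\to\Ss(\R^n_+)$. Taking transposes then gives the lemma.

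The principal obstacle is the boundary-vanishing identity in the second paragraph, which is exactly what singles out our extension from the common extensions that only satisfy \eqref{Eqn::CondAB} for finitely many $k$; once this is in hand, the seminorm bookkeeping and the duality step are standard.
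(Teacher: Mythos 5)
Your proposal is correct and takes essentially the same route as the paper: dualize to $E^*:\Ss(\R^n)\to\Ss(\R^n_+)$ using \eqref{Eqn::EAdj}, check that the negative-index moment conditions in \eqref{Eqn::CondAB} force all derivatives $\partial^\alpha E^*g$ to vanish at $x_n=0^+$ (so the zero extension is Schwartz), and then obtain the seminorm bounds from the absolute summability in \eqref{Eqn::CondAB}. The only cosmetic difference is your choice of the seminorms $\sup(1+|x|)^N|\partial^\alpha g|$ in place of the paper's $\sup|x^\alpha\partial^\beta g|$; these generate the same topology.
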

\begin{proof} Since $\Ss'(\R^n_+)=\Ss(\R^n_+)'$ (see Remark~\ref{Rmk::DefSs} \ref{Item::DefSs::DualS}), by duality it is equivalent to show that $E^*:\Ss(\R^n)\to\Ss(\R^n_+)$ is continuous.

% For all $f\in\Ss'(\R^n_+)$, there exist $C_f,m,k$ (depending on $f$) satisfying 
% \begin{equation*}
%     |f(g)|\le C_f\sum_{|\alpha|\le m;|\beta|\le k}\sup_{x\in\R^n_+}|x^\alpha\partial^\beta g(x)|\qquad\forall\, g\in\Ss(\R^n_+).
% \end{equation*}
% Hence we have
% \begin{equation*}
%     |Ef(g)|=|f(E^*g)|\le C_f\sum_{|\alpha|\le m;|\beta|\le k}\sup_{x\in\R^n_+}|x^\alpha\partial^\beta E^*g(x)|\qquad \forall\,g\in\Ss(\R^n).
% \end{equation*}
Let $g\in\Ss(\R^n)$, using \eqref{Eqn::EAdj} we see that for every $\alpha,\beta\in\N_0^n$ and $x\in\R^n_+$,
\begin{align*}
    |x^\alpha\partial^\beta E^*g(x)| \le |x^\alpha\partial^\beta g(x)|+\sum_{j\in\Z}\big|a_j(-b_j)^{\alpha_n-\beta_n-1}\big(x',-\tfrac{1}{b_j}\cdot x_n\big)^\alpha(\partial^\beta g) (x',-\tfrac{x_n}{b_j})\big|.
\end{align*}

By \eqref{Eqn::CondAB}, $\sum_{j\in\Z}|a_j|\cdot b_j^{\alpha_n-\beta_n-1}<\infty$. We conclude that $|x^\alpha\partial^\beta E^*g(x)|\lesssim_{a,b,\alpha,\beta}\sup_{y\in\R^n}|y^\alpha\partial^\beta g(y)|$ for all $x\in\R^n_+$.

On the other hand, by the condition $\sum_ja_j(-b_j)^{-k}=1$ for $k=1,2,3,\dots$ in \eqref{Eqn::CondAB}, we have
\begin{equation*}
    \lim_{x_n\to0+}\partial^\beta E^*g(x)=\partial^\beta g(x',0)-\sum_{j\in\Z}a_j(-b_j)^{-\beta_n-1}(\partial^\beta g)(x',0)=0,\quad\forall \beta\in\N_0^n.
\end{equation*}

Therefore along with the zero extension to $\R^n_-$, $E^*g$ defines a Schwartz function in $\R^n$ which has support $\overline{\R^n_+}$. Moreover $\sup_{x\in\R^n}|x^\alpha\partial^\beta E^*g(x)|\lesssim_{a,b,\alpha,\beta}\sup_{y\in\R^n}|y^\alpha\partial^\beta g(y)|$ with the implied constant being independent of $g$.

Since the seminorms $\big\{\sup_{x\in\R^n}|x^\alpha\partial^\beta E^*g(x)|\big\}_{\alpha,\beta\in\N_0^n}$ define the topology of $\Ss(\R^n)$, we conclude that\\ $E^*:\Ss(\R^n)\to\Ss(\R^n_+)$ is continuous. Therefore $E:\Ss'(\R^n_+)\to\Ss'(\R^n)$ is continuous as well.
\end{proof}
% The boundedness $E^*:\Ss'(\R^n)\to\Ss'(\R^n_+)$ follows by the similar but simpler calculations. We omit the proof to reader.

We now turn to prove Theorem~\ref{MainThm} \ref{Item::Thm::BFBdd} and \ref{Item::Thm::MorreyBdd}. 

To prove Theorem~\ref{MainThm} \ref{Item::Thm::BFBdd}, it might be possible to use the mean oscillation approach in \cite[Chapter 4.5.5]{TriebelTheoryOfFunctionSpacesII}. However \cite[(4.5.2/9)]{TriebelTheoryOfFunctionSpacesII} fails when $(a_j)_j$ has infinite nonzero terms. This is critical if one wants to prove the Besov and Triebel-Lizorkin boundedness of Seeley's operator (see Remark~\ref{Rmk::SeeleyCont1}) using such method. The authors do not know what the correct modification of \cite[(4.5.2/9)]{TriebelTheoryOfFunctionSpacesII} is.

Instead, we adapt the arguments from \cite[Section 2.9]{TriebelTheoryOfFunctionSpacesI}. The proof involves characteristic function multiplier \cite[Section 2.8.7]{TriebelTheoryOfFunctionSpacesI}, the Fubini decomposition \cite[Section 2.5.13]{TriebelTheoryOfFunctionSpacesI} and interpolations. We recall these facts from literature.
\begin{prop}\label{Prop::Stuff}
Let $0<p,q\le\infty$, $s\in\R$ and $0\le\tau\le\frac1p$. Let $U\subseteq\R^n$ be an arbitrary open subset.
\begin{enumerate}[(i)]

    \item\label{Item::Stuff::SumDer} For $\As\in\{\Bs,\Fs\}$ and for every integer $m\ge1$, $\As_{pq}^s(U)=\big\{\sum_{|\alpha|\le m}\partial^\alpha g_\alpha:g_\alpha\in\As_{pq}^{s+m}(U)\big\}$ and has equivalent (quasi-)norm $\|f\|_{\As_{pq}^s(U)}\approx_{p,q,s,m}\inf\big\{\sum\limits_{|\alpha|\le m}\|g_\alpha\|_{\As_{pq}^{s+m}(U)}:f=\sum\limits_{|\alpha|\le m}\partial^\alpha g_\alpha\text{ as distributions}\big\}$.
    
    \item\label{Item::Stuff::Holder}If $U\in\{\R^n,\R^n_+\}$ or $U$ is a bounded smooth domain, then $C^{k,s}(U)=\Bs_{\infty\infty}^s(U)$ for all $k\in\Z$ and $0<s<1$, with equivalent norms.
 
    \item{\normalfont(Fubini)}\label{Item::Stuff::Fubini} When $p<\infty$ or $p=q=\infty$, and when $s>n\max(0,\frac1p-1,\frac1q-1)$, we have decomposition $\Fs_{pq}^s(\R^n)=L^p(\R_{x_n};\Fs_{pq}^s(\R^{n-1}_{x'}))\cap L^p(\R^{n-1}_{x'};\Fs_{pq}^s(\R_{x_n}))$, with equivalent (quasi-)norms:
    \begin{equation}\label{Eqn::Stuff::Fubini}
        \|f\|_{\Fs_{pq}^s(\R^n)}^p\approx_{n,p,q,s}\int_\R\|f(\cdot,x_n)\|_{\Fs_{pq}^s(\R^{n-1})}^pdx_n+\int_{\R^{n-1}}\|f(x',\cdot)\|_{\Fs_{pq}^s(\R)}^pdx'.
    \end{equation}
    \item{\normalfont(Interpolations)} \label{Item::Stuff::Interpo} For $s_0,s_1\in\R$ and $\theta\in(0,1)$ such that $s_0\neq s_1$ and $s=(1-\theta)s_0+\theta s_1$, we have
    \begin{gather}
        \label{Eqn::Stuff::CpxInt}\quad\langle\Fs_{pq}^{s_0}(\R^n),\Fs_{pq}^{s_1}(\R^n),\theta\rangle=\Fs_{pq}^s(\R^n);
        \\
        \label{Eqn::Stuff::RealInt1}\quad (\Fs_{pp}^{s_0}(\R^n),\Fs_{pp}^{s_1}(\R^n))_{\theta,q}=\Bs_{pq}^{s}(\R^n);
        \\
        \label{Eqn::Stuff::RealInt2}\quad
             (\Fs_{pp}^{s_0,\tau}(\R^n),\Fs_{pp}^{s_1,\tau}(\R^n))_{\theta,q}=\Ns_{pq}^{s\tau}(\R^n),\quad \text{for }\tau\in[0,\tfrac1p)\quad(\text{provided }p<\infty).
    \end{gather}

    \item{\normalfont(Local representations)}\label{Item::Stuff::FInftyQ} For $s<0$, we have equivalent norms 
    \begin{align}
        \|f\|_{\Bs_{pq}^{s\tau}(\R^n)}\approx&_{n,p,q,s,\tau}\sup_{x\in\R^n;R>0}R^{-n\tau}\|f\|_{\Bs_{pq}^s(B(x,R))};\\
        \|f\|_{\Fs_{pq}^{s\tau}(\R^n)}\approx&_{n,p,q,s,\tau}\sup_{x\in\R^n;R>0}R^{-n\tau}\|f\|_{\Fs_{pq}^s(B(x,R))},\qquad\text{provided }p<\infty.
    \end{align}
    
\end{enumerate}
\end{prop}
\begin{proof}
For \ref{Item::Stuff::SumDer}, by \cite[Theorem~1.24]{TriebelTheoryOfFunctionSpacesIV} $\|\partial^\alpha \tilde g\|_{\As_{pq}^s(\R^n)}\lesssim\|\tilde g\|_{\As_{pq}^{s+m}(\R^n)}$ holds for $|\alpha|\le m$. Note that  $\tilde g|_U=g$ implies $\partial^\alpha\tilde g|_U=\partial^\alpha g$, thus taking restrictions we get $\|\sum_{|\alpha|\le m}\partial^\alpha g_\alpha\|_{\As_{pq}^s(U)}\lesssim\sum_{|\alpha|\le m}\|g_\alpha\|_{\As_{pq}^{s+m}(U)}$. Therefore $\|f\|_{\As_{pq}^s(U)}\lesssim\sum_{|\alpha|\le m}\|g_\alpha\|_{\As_{pq}^{s+m}(U)}$ whenever $f=\sum_{|\alpha|\le m}\partial^\alpha g_\alpha$.

Conversely for $f\in\As_{pq}^s(U)$, take  an extension $\tilde f\in\As_{pq}^s(\R^n)$, we have $\tilde f=(I-\Delta)(I-\Delta)^{-1}\tilde f=\tilde g_0+\sum_{j=1}^n\partial_j\tilde g_j$ where $\tilde g_0:=(I-\Delta)^{-1}\tilde f$ and $\tilde g_j:=-\partial_j(I-\Delta)^{-1}\tilde f$ ($1\le j\le n$). Thus by \cite[Theorems~1.22 and 1.24]{TriebelTheoryOfFunctionSpacesIV} we have $\sum_{j=0}^n\|\tilde g_j\|_{\As_{pq}^{s+1}(\R^n)}\lesssim\|\tilde f\|_{\As_{pq}^s(\R^n)}$. Taking infimum of all extensions $\tilde f$ over $f$ we get \ref{Item::Stuff::SumDer} for $m=1$. Doing this recursively we get all $m\ge1$.

\smallskip For \ref{Item::Stuff::Holder}, we have $C^{k,s}(\R^n)=\Bs_{\infty\infty}^{k+s}(\R^n)$ from \cite[Section~2.5.7]{TriebelTheoryOfFunctionSpacesI} for $k\ge0$ and $0<s<1$. Thus in this case $\Bs_{\infty\infty}^{k+s}(U)=\{\tilde f|_U:\tilde f\in C^{k,s}(\R^n)\}\subseteq C^{k,s}(U)$ for arbitrary open subset $U$. When $U=\R^n_+$ or $U$ is bounded smooth, by the existence of extension operator $C^{k,s}(U)\to C^{k,s}(\R^n)$ (see \cite[Lemma~6.37]{GilbargTrudinger}) we get $C^{k,s}(U)\subseteq\Bs_{\infty\infty}^{k+s}(U)$. This proves the case $k\ge0$.

Comparing the result \ref{Item::Stuff::SumDer} and \eqref{Eqn::NegHoldNorm} we see that $C^{k,s}(U)=\Bs_{\infty\infty}^{k+s}(U)$ for $k<0$ as well.

\smallskip
For \ref{Item::Stuff::Fubini} see \cite[Theorem~3.25]{TriebelTheoryOfFunctionSpacesIV} and \cite[Theorem~4.4]{TriebelStructureFunctions}  In the reference a further decomposition $\Fs_{pq}^s(\R^n)=\bigcap_{k=1}^n L^p(\R^{n-1}_{\widehat {x_k}};\Fs_{pq}^s(\R_{x_k}))$ is given, where $\widehat{x_k}=(x_1,\dots, x_{k-1}, x_{k+1},\dots, x_n)$. Our result is implied by the following:

Write $\widehat{x'_k}=(x_1,\dots, x_{k-1}, x_{k+1},\dots, x_{n-1})$ for $1\le k\le n-1$. By \cite[Theorem~3.25]{TriebelTheoryOfFunctionSpacesIV} we have
\begin{align*}
    \Fs_{pq}^s(\R^n_x)&=\bigcap_{k=1}^nL^p(\R^{n-1}_{\widehat {x'_k}};\Fs_{pq}^s(\R_{x_k})),&\|f\|_{\Fs_{pq}^s(\R^n)}^p\approx&_{n,p,q,s}\sum_{k=1}^n\int_{\R^{n-1}}\|f_{\widehat{x_k}}\|_{\Fs_{pq}^s(\R_{x_k})}^pd\widehat{x_k};
    \\
     \Fs_{pq}^s(\R^{n-1}_{x'})&=\bigcap_{k=1}^{n-1} L^p(\R^{n-2}_{\widehat {x'_k}};\Fs_{pq}^s(\R_{x_k})),&\|g\|_{\Fs_{pq}^s(\R^{n-1}_{x'})}^p\approx&_{n,p,q,s}\sum_{k=1}^{n-1}\int_{\R^{n-2}}\|g_{\widehat {x'_k}}\|_{\Fs_{pq}^s(\R_{x_k})}^pd\widehat{x'_k}.
\end{align*}
Here for each $\widehat{x_k}\in\R^{n-1}$, $f_{\widehat {x_k}}$ is the slide of $f$ with variable $x_k\in\R$. Similar notations for $g_{\widehat{x'_k}}$. Therefore we get \eqref{Eqn::Stuff::Fubini} by the following:
\begin{align*}
    \|f\|_{\Fs_{pq}^s(\R^n)}^p\approx&\textstyle\sum_{k=1}^{n-1}\int_{\R^{n-1}}\|f_{\widehat{x_k}}\|_{\Fs_{pq}^s(\R_{x_k})}^pd\widehat{x_k}+\int_{\R^{n-1}}\|f(x',\cdot)\|_{\Fs_{pq}^s(\R)}^pd\widehat{x_n}
    \\
    \approx&\textstyle\sum_{k=1}^{n-1}\int_{\R}\big(\int_{\R^{n-2}}\|f(\cdot,x_n)_{\widehat{x'
    _k}}\|_{\Fs_{pq}^s(\R_{x_k})}^pd\widehat{x'_k}\big)dx_n+\int_{\R^{n-1}}\|f(x',\cdot)\|_{\Fs_{pq}^s(\R)}^pdx'
    \\
    \approx&\textstyle\int_{\R}\|f(\cdot,x_n)\|_{\Fs_{pq}^s(\R^{n-1})}^pdx_n+\int_{\R^{n-1}}\|f(x',\cdot)\|_{\Fs_{pq}^s(\R)}^pdx'.
\end{align*}

\smallskip
For \ref{Item::Stuff::Interpo}, we refer the reader to \cite{YSY15}. 

Here \eqref{Eqn::Stuff::CpxInt} follows from \cite[Theorem~2.13 and Remark~2.13 (i), (iii)]{YSY15}. Here $\langle X_0,X_1,\theta\rangle$ is call the \textit{$\pm$-method}, which is different from the standard complex interpolation $[X_0,X_1]_\theta$. See \cite[Definitions 2.9 and 2.50]{YSY15} for example.

See \cite[Theorem~2.80]{YSY15} for \eqref{Eqn::Stuff::RealInt2} and \cite[Remark~2.81~(i)]{YSY15} for \eqref{Eqn::Stuff::RealInt1} when $p<\infty$. Recall from Remark~\ref{Rmk::RmkSpace} that we use $\Ns_{pq}^{s\tau}=\mathcal N_{\frac1p-\tau,p,q}^s$. 

The endpoint case $p=\infty$ of \eqref{Eqn::Stuff::RealInt1} follows from the standard real interpolation of Besov spaces. Indeed we have $\Fs_{\infty\infty}^s=\Bs_{\infty\infty}^s$ and $(\Bs_{\infty\infty}^{s_0}(\R^n),\Bs_{\infty\infty}^{s_1}(\R^n))_{\theta,q}=\Bs_{\infty q}^{s}(\R^n)$. See \cite[Theorem~2.4.2~(i)]{TriebelTheoryOfFunctionSpacesI} for example.

\smallskip
See \cite[Theorem~3.64]{TriebelHybrid} for \ref{Item::Stuff::FInftyQ}. In the reference the collection $\{B(x,2^{-J}):x\in\R^n,J\in\Z\}$ is replaced by $\{2Q_{J,v}:=2^{-J}(v+(-\frac12,\frac32)^n):J\in\Z,v\in\Z^n\}$, but there is no difference between the proofs. % Note that one cannot use $\{Q_{J,v}:=2^{-J}(v+(0,1)^n):J\in\Z,v\in\Z^n\}$ because for $J_0\in\Z$, the set $\{x\in\R^n:x_k\in2^{-J_0}\Z,\ \exists\,1\le k\le n\}$ is not contained in the union $\bigcup_{J\ge J_0;v\in\Z^n}Q_{J,v}$.
\end{proof}

Recall the dilation operator $(\vartheta^rf)(x):=f(x',rx_n)$ for $r\in\R\backslash\{0\}$ from \eqref{Eqn::OpDila}.

\begin{prop}\label{Prop::Dila}For any $0<p,q\le\infty$ and $s\in\R$ there is a $M=M_{p,q}^{n,s}>0$ and $C=C_{n,p,q,s,M}>0$ such that $$\|\vartheta^rf\|_{\As_{pq}^s(\R^n)}\le C(|r|^{-M}+|r|^M)\|f\|_{\As_{pq}^s(\R^n)},\quad\text{for }r\neq0,\quad\As\in\{\Bs,\Fs\},\quad f\in\As_{pq}^s(\R^n).$$
% \begin{enumerate}[(i)]
%     \item\label{Item::Dila::1dim} For every $0<p<\infty$, $0<q\le\infty$ and $s>\frac1p-1$ there is a $C_1=C_1(p,q,s)>0$ such that $\|\vartheta^rf\|_{\Fs_{pq}^s(\R)}\le C_1(|r|^{-\frac1p}+|r|^{s-\frac1p})\|f\|_{\Fs_{pq}^s(\R)}$.
%     \item\label{Item::Dila::Inftyq} For every $0<q\le\infty$ and $-1<s<0$ there is a $C_2=C_2(n,q,s)>0$ such that $\|\vartheta^rf\|_{\Fs_{pq}^s(\R^n)}\le C_1(|r|^{??}+|r|^{??})\|f\|_{\Fs_{pq}^s(\R^n)}$.
    
% \end{enumerate}
\end{prop}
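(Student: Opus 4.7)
The plan is to reduce to the classical one-dimensional isotropic dilation estimate via Fubini, then extend the range of $(s,p,q)$ by derivative lifting and real interpolation. Without loss of generality $r>0$, since $\vartheta^{-1}$ (the pullback by an orthogonal reflection of $\R^n$) is a bounded operator on every $\As_{pq}^s(\R^n)$ with operator norm depending only on $n,p,q,s$.

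As the base case, consider $\Fs_{pq}^s(\R^n)$ with $0<p<\infty$ (or $p=q=\infty$) and $s>n\max(0,\tfrac1p-1,\tfrac1q-1)$. Here Proposition~\ref{Prop::Stuff}~\ref{Item::Stuff::Fubini} identifies the (quasi-)norm with $\|\cdot\|_{L^p(\R_{x_n};\Fs_{pq}^s(\R^{n-1}_{x'}))}+\|\cdot\|_{L^p(\R^{n-1}_{x'};\Fs_{pq}^s(\R_{x_n}))}$. The first component transforms by the exact factor $r^{-1/p}$ under $\vartheta^r$ via the change of variables $y_n=rx_n$. For the second, Fubini in $x'$ reduces the estimate to the classical one-dimensional dilation bound $\|g(r\cdot)\|_{\Fs_{pq}^s(\R)}\lesssim_{p,q,s}(r^{M_0}+r^{-M_0})\|g\|_{\Fs_{pq}^s(\R)}$, a standard fact recorded in, e.g., \cite[Section~2.3.1]{TriebelTheoryOfFunctionSpacesI}.

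To remove the lower bound on $s$, choose $m\in\N$ with $s+m$ in the base range and apply Proposition~\ref{Prop::Stuff}~\ref{Item::Stuff::SumDer} to write $f=\sum_{|\alpha|\le m}\partial^\alpha g_\alpha$ with $\sum_\alpha\|g_\alpha\|_{\Fs_{pq}^{s+m}}\lesssim\|f\|_{\Fs_{pq}^s}$. The commutation $\partial^\alpha\circ\vartheta^r=r^{\alpha_n}\vartheta^r\circ\partial^\alpha$ gives $\vartheta^r f=\sum_\alpha r^{-\alpha_n}\partial^\alpha\vartheta^r g_\alpha$; reapplying Proposition~\ref{Prop::Stuff}~\ref{Item::Stuff::SumDer} and taking the infimum over decompositions extends the bound to every $s\in\R$ in the $\Fs$-case with $p<\infty$.

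For the Besov case $\Bs_{pq}^s$, apply real interpolation (Proposition~\ref{Prop::Stuff}~\ref{Item::Stuff::Interpo}): from $\Bs_{pq}^s=(\Fs_{pp}^{s_0},\Fs_{pp}^{s_1})_{\theta,q}$ and the endpoint $\Fs$-bounds, interpolating the operator $\vartheta^r$ preserves the polynomial form $r^M+r^{-M}$. The remaining Morrey-type endpoint $\Fs_{\infty q}^s=\Fs_{qq}^{s,1/q}$ is handled by Proposition~\ref{Prop::Stuff}~\ref{Item::Stuff::FInftyQ}: shift by derivatives to reach $s<0$, rewrite the norm as $\sup_{x,R}R^{-n/q}\|\cdot\|_{\Fs_{qq}^s(B(x,R))}$, and note that $\vartheta^r$ maps $B(x,R)$ onto an ellipsoid contained in $B((x',rx_n),\max(1,r)R)$; applying the already-proven $\Fs_{qq}^s$ bound to an extension and rescaling the supremum produces the desired estimate with at worst an extra factor of $\max(1,r)^{n/q}$. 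The most delicate ingredient is the one-dimensional dilation bound at small $p,q$, which is quasi-Banach-sensitive but standard; the rest is bookkeeping of polynomial factors in $r^{\pm 1}$.
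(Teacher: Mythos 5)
Your proposal follows the same strategy as the paper: establish the dilation bound for a suitable range of $s$ via the Fubini decomposition (Proposition~\ref{Prop::Stuff}~\ref{Item::Stuff::Fubini}) reducing to the one-dimensional case, extend to all $s$ by the derivative-lifting characterization (Proposition~\ref{Prop::Stuff}~\ref{Item::Stuff::SumDer}), obtain the Besov case by real interpolation \eqref{Eqn::Stuff::RealInt1}, and handle $\Fs_{\infty q}^s$ via the local representation (Proposition~\ref{Prop::Stuff}~\ref{Item::Stuff::FInftyQ}) together with a ball-containment argument that gives the extra polynomial factor. The only differences are cosmetic: you reduce to $r>0$ at the outset (the paper carries $|r|$ throughout), you state the $n$-dimensional Fubini identity as the base case rather than doing $n=1$ first and then Fubini, and you cite the one-dimensional scaling estimate to \cite[Section~2.3.1]{TriebelTheoryOfFunctionSpacesI} where the paper cites \cite[Section~2.3.1]{EdmundsTriebel} (the latter being the place where the explicit $r$-dependence of the ball-means norm is actually worked out). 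None of these affect the substance of the argument.
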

% In practice we only need to use $\|\vartheta^rf\|_{\Fs_{pq}^s}\lesssim_M\max(|r|^M,|r|^{-M})\|f\|_{\Fs_{pq}^s}$ for some large enough $M=M(n,p,q,s)>0$.
\begin{proof}We only need to prove the Triebel-Lizorkin case, i.e. $\As=\Fs$. Indeed suppose we get the case $\As=\Fs$, then by real interpolation \eqref{Eqn::Stuff::RealInt1}, for each $0<p,q\le\infty$ and $s\in\R$ there is a $C_{p,q,s}>0$ such that (see \cite[Definition~1.2.2/2 and Theorem~1.3.3]{TriebelInterpolation}, in fact $C_{p,q,s}=C_q$)
\begin{equation}\label{Eqn::RealIntOpNorm}
    \|T\|_{\Bs_{pq}^s(\R^n)\to\Bs_{pq}^s(\R^n)}\le C_{p,q,s}(\|T\|_{\Fs_{pp}^{s-1}(\R^n)\to\Fs_{pp}^{s-1}(\R^n)}+\|T\|_{\Fs_{pp}^{s+1}(\R^n)\to\Fs_{pp}^{s+1}(\R^n)}),
\end{equation}
whenever $T:\Fs_{pp}^{s-1}(\R^n)\to\Fs_{pp}^{s-1}(\R^n)$ is a bounded linear map such that $T:\Fs_{pp}^{s+1}(\R^n)\to\Fs_{pp}^{s+1}(\R^n)$ is also bounded. Taking $T=\vartheta^r$ and replacing  $M_{p,q}^{n,s}$ (obtained from the $\Fs$-cases) with $\max(M_{p,q}^{n,s},M_{p,p}^{n,s-1},M_{p,p}^{n,s+1})$ we complete the proof.

\smallskip
\noindent\textsf{Step 1}: The case $n=1$, $0<p<\infty$, $0<q\le\infty$ and $s>\frac1p-1$. We use \cite[Section 2.3.1]{EdmundsTriebel}. 

Recall $\lambda=(\lambda_j)_{j=0}^\infty$ from Definition~\ref{Defn::BsFs}. When $s>\max(0,1-\frac1p)$, by \cite[Theorem~2.3.3]{TriebelTheoryOfFunctionSpacesII}, we have $\|f\|_{\Fs_{pq}^s(\R)}\approx_{p,q,s}\|f\|_{L^p(\R^n)}+\big\|x\mapsto\big(\int_0^\infty t|(\lambda_1(\frac\cdot t)\ast f)(x)|\frac{dt}{t^{sq+1}}\big)^{1/q}\big\|_{L^p(\R)}$. The same dilation argument from \cite[(2.3.1/4) and (2.3.1/5)]{EdmundsTriebel} shows that
\begin{equation}\label{Eqn::PfDila::1Dim}
    \|\vartheta^rf\|_{\Fs_{pq}^s(\R)}\approx|r|^{-\frac1p}\|f\|_{L^p}+|r|^{s-\frac1p}\Big\|x\mapsto\Big(\int_0^\infty |(\lambda_1(\tfrac\cdot t)\ast f)(x)|\frac{dt}{t^{sq}}\Big)^\frac1q\Big\|_{L^p}\lesssim_{p,q,s}(|r|^{-\frac1p}+|r|^{s-\frac1p})\|f\|_{\Fs_{pq}^s(\R)}.
\end{equation}
In particular we can take $M_{p,q}^{1,s}:=\max(\frac1p,s-\frac1p,\frac1p-s)$ in this case.

\smallskip
\noindent \textsf{Step 2}: The case $n\ge2$, $0<p<\infty$, $0<q\le\infty$ and $s>n\max(0,\frac1p-1,\frac1q-1)$. We use Proposition~\ref{Prop::Stuff} \ref{Item::Stuff::Fubini}:
\begin{align*}
    \|\vartheta^rf\|_{\Fs_{pq}^s(\R^n)}\approx&\Big(\int_{\R^{n-1}}\|\vartheta^r(f(x',\cdot))\|_{\Fs_{pq}^s(\R)}^pdx'\Big)^{1/p}+\Big(\int_{\R}\|f(\cdot,rx_n)\|_{\Fs_{pq}^s(\R^{n-1})}^pdx_n\Big)^{1/p}
    \\
    \lesssim&(|r|^{-\frac1p}+|r|^{s-\frac1p})\Big(\int_{\R^{n-1}}\|f(x',\cdot)\|_{\Fs_{pq}^s(\R)}^pdx'\Big)^{\frac1p}+|r|^{-\frac1p}\Big(\int_{\R}\|f(\cdot,y_n)\|_{\Fs_{pq}^s(\R^{n-1})}^pdy_n\Big)^{\frac1p}
    \\
    \lesssim&(|r|^{-M_{p,q}^{1,s}}+|r|^{M_{p,q}^{1,s}})\|f\|_{\Fs_{pq}^s(\R^n)}.
\end{align*}
In particular we can take $M_{p,q}^{n,s}:=M_{p,q}^{1,s}$ in this case.

\smallskip
\noindent \textsf{Step 3}: The case $0<p<\infty$, $0<q\le\infty$ and $s\in\R$. We use Proposition~\ref{Prop::Stuff} \ref{Item::Stuff::SumDer}.

Let $m\ge0$ be such that $s+m>n\max(0,\frac1p-1,\frac1q-1)$. Note that $\vartheta^\frac1r\circ\partial^\alpha=r^{\alpha_n}\partial^\alpha\circ\vartheta^\frac1r$, thus
\begin{align*}
    &\|\vartheta^rf\|_{\Fs_{pq}^s(\R^n)}\approx\textstyle\inf\Big\{\sum_{|\alpha|\le m}\|g_\alpha\|_{\Fs_{pq}^{s+m}(\R^n)}:\vartheta^rf=\sum_{|\alpha|\le m}\partial^\alpha g_\alpha\Big\}
    \\
    =&\inf\Big\{\sum_{|\alpha|\le m}\|g_\alpha\|_{\Fs_{pq}^{s+m}(\R^n)}:f=\sum_{|\alpha|\le m}r^{\alpha_n}\partial^\alpha (\vartheta^\frac1rg_\alpha)\Big\}=\inf\Big\{\sum_{|\alpha|\le m}\frac{\|\vartheta^rh_\alpha\|_{\Fs_{pq}^{s+m}(\R^n)}}{|r|^{\alpha_n}}:f=\sum_{|\alpha|\le m}\partial^\alpha h_\alpha\Big\}
    \\
    \lesssim&(|r|^{-m}+1)(|r|^{-M_{p,q}^{n,s+m}}+|r|^{M_{p,q}^{n,s+m}})\|f\|_{\Fs_{pq}^s(\R^n)}.
\end{align*}
In particular we can take $M_{p,q}^{n,s}:=M_{p,q}^{n,s+m}+m$ in this case.

\smallskip
\noindent \textsf{Step 4}: The case $p=\infty$, $0<q\le\infty$ and $s\in\R$. We use Proposition~\ref{Prop::Stuff} \ref{Item::Stuff::FInftyQ}.

For $x\in\R^n$ and $R>0$, we see that $f\in\Fs_{pq}^s\big(B\big((x',\frac{x_n}r),\max(R,\frac R{|r|})\big)\big)$ implies $\vartheta^r f\in\Fs_{pq}^s(B(x,R))$. Therefore
\begin{align*}
    &\|\vartheta^rf\|_{\Fs_{\infty q}^s(\R^n)}\approx\sup_{x\in\R^n,R>0}R^{-\frac nq}\inf\big\{\|\tilde g\|_{\Fs_{qq}^s(\R^n)}:\tilde g|_{B(x,R)}=(\vartheta^rf)|_{B(x,R)}\big\}
    \\
    \le&\sup_{x\in\R^n,R>0}R^{-\frac nq}\inf\big\{\|\vartheta^\frac1r\tilde f\|_{\Fs_{qq}^s(\R^n)}:\tilde f\big|_{B\big((x',\frac{x_n}r),\max(R,\frac R{|r|})\big)}=f\big|_{B\big((x',\frac{x_n}r),\max(R,\frac R{|r|})\big)}\big\}
    \\
    \lesssim&(|r|^{-M_{q,q}^{n,s}}+|r|^{M_{q,q}^{n,s}})\sup_{x\in\R^n,R>0}R^{-\frac nq}\| f\|_{\Fs_{qq}^s\left(B\left((x',\frac{x_n}r),\max(R,\frac R{|r|})\right)\right)}
    \\
    \lesssim&(|r|^{-M_{q,q}^{n,s}}+|r|^{M_{q,q}^{n,s}})(1+|r|^{-\frac nq})\sup_{x\in\R^n,R>0}R^{-\frac nq}\| f\|_{\Fs_{qq}^s(B(x,R))}\lesssim(|r|^{-M_{q,q}^{n,s}-\frac nq}+|r|^{M_{q,q}^{n,s}})\| f\|_{\Fs_{\infty q}^s(\R^n)}.
\end{align*}
Taking $M_{\infty,q}^{n,s}:=M_{q,q}^{n,s}+n/q$, we complete the proof.
% For \ref{Item::Dila::Inftyq} we use \cite{DrihemTLTypeDiff}. By \cite[Theorem~4.6]{DrihemTLTypeDiff} we have, for every $0<q\le\infty$ and $0<s<1$,
% \begin{equation*}
%     \|f\|_{\Fs_{\infty q}^s(\R^n)}
% \end{equation*}
\end{proof}

We now start to prove Theorem~\ref{MainThm} \ref{Item::Thm::BFBdd}. In order to handle Theorem~\ref{MainThm} \ref{Item::Thm::MorreyBdd} later, we consider a more quantitative version of Theorem~\ref{MainThm} \ref{Item::Thm::BFBdd}. It will contain an estimate of the operator
norm of $E$.

\begin{thm}[Quantitative common extensions]\label{Thm::QntBFBdd}
For every $0<p,q\le\infty$, $s\in\R$, there is a constant $\widehat M=\widehat M_{p,q}^{n,s}$, such that the following holds:

Let $m_1,m_2\in\Z_{\ge0}$ be two non-negative integers such that $n\max(0,\frac1p-1,\frac1q-1)-m_1<s<\frac1p+m_2$. %and for every $0<p,q\le\infty$, $n\max(0,\frac1p-1,\frac1q-1)-m_1<s<\frac1p+m_2$, a constant $C_{n,p,q,s}>0$, 
Let $a=(a_j)_{j=-\infty}^\infty\subset\R$ and $b=(b_j)_{j=-\infty}^\infty\subset\R_+$ be two sequences satisfying
\begin{equation}\label{Eqn::QntBFBdd::CondAB}
    \sum_{j=-\infty}^\infty a_j(-b_j)^k=1\text{ for }-m_1\le k\le m_2;\quad \sum_{j\in\Z} 2^{\delta|j|}|a_j|(b_j^{-\max(m_1,\widehat M)}+b_j^{\max(m_2,\widehat M)})<\infty\text{ for some }\delta>0.
\end{equation}
Then $E=E^{a,b}$ given by \eqref{Eqn::ExtOp} has the boundedness $E:\As_{pq}^s(\R^n_+)\to\As_{pq}^s(\R^n)$ for $\As\in\{\Bs,\Fs\}$. Moreover for the operator norms of $E$ we have
\begin{equation}\label{Eqn::QntBFBdd::ENorm}
    \|Ef\|_{\As_{pq}^s(\R^n)}\le C_{n,p,q,s,\delta,{\widehat M}}\sum_{j\in\Z}2^{\delta|j|}|a_j|\big(b_j^{-\widehat M_{p,q}^{n,s}}+b_j^{\widehat M_{p,q}^{n,s}}\big)\|f\|_{\As_{pq}^s(\R^n_+)}.
\end{equation}
Here $C_{n,p,q,s,\delta,\widehat M}$ does not depend on the choices of $(a_j)_j$ and $(b_j)_j$.
\end{thm}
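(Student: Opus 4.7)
The plan is to reduce everything to a base index $s_0$ lying in the range $\max(n(\tfrac1p-1),\tfrac1p-1)<s_0<\tfrac1p$, where the zero extension $S_n$ of \eqref{Eqn::OpS} is already bounded $\As_{pq}^{s_0}(\R^n_+)\to\As_{pq}^{s_0}(\R^n)$ by Remark \ref{Rmk::OpSBdd}. I will take $\widehat M_{p,q}^{n,s}$ of the form $M_{p,q}^{n,s_0}+|s-s_0|+C$, depending only on $n,p,q,s$; the shift $|s-s_0|$ absorbs the exponent produced by derivative commutations, while $C$ accounts for a further loss from real interpolation in the Besov case.

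\textbf{Base case $s=s_0$.} Via Remark \ref{Rmk::RmkofTrivialComm} write $E^{a,b}f=S_nf+\sum_{j\in\Z}a_j\,\vartheta^{-b_j}(S_nf)$. In the $q_0$-quasi-Banach norm of $\Fs_{pq}^{s_0}(\R^n)$ with $q_0=\min(p,q,1)$, combine the boundedness of $S_n$, the dilation estimate $\|\vartheta^{-b_j}h\|_{\Fs_{pq}^{s_0}}\lesssim(b_j^{-M_0}+b_j^{M_0})\|h\|_{\Fs_{pq}^{s_0}}$ with $M_0:=M_{p,q}^{n,s_0}$ from Proposition \ref{Prop::Dila}, and Lemma \ref{Lem::TriSum} to converge the sum over $j$. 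When $p=\infty$ in the $\Fs$-case, first pass to local $\Fs_{qq}^{s_0}$-norms on balls via Proposition \ref{Prop::Stuff} \ref{Item::Stuff::FInftyQ}, then repeat the argument. The Besov case $\Bs_{pq}^{s_0}$ is obtained from the two Triebel--Lizorkin endpoints $\Fs_{pp}^{s_0\pm\eta}$ by the real interpolation formula \eqref{Eqn::Stuff::RealInt1} together with the operator-norm estimate \eqref{Eqn::RealIntOpNorm}.

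\textbf{Reduction of general $s$ to the base.} Pick integers $\mu_1,\mu_2\ge 0$ so that $s_0=s+\mu_1-\mu_2$ sits in the base range; the bound $n\max(0,\tfrac1p-1,\tfrac1q-1)-m_1<s<\tfrac1p+m_2$ guarantees such a choice with $\mu_1\le m_1$ and $\mu_2\le m_2$. For the ``up'' direction ($\mu_1=0$, $\mu_2\ge 1$), use the Triebel--Lizorkin/Besov lifting equivalence $\|g\|_{\As_{pq}^s(\R^n)}\approx\|g\|_{\As_{pq}^{s_0}(\R^n)}+\sum_{|\alpha|\le\mu_2}\|\partial^\alpha g\|_{\As_{pq}^{s_0}(\R^n)}$ combined with $\partial^\alpha E^{a,b}=E^{a(-b)^{\alpha_n},b}\partial^\alpha$ from Lemma \ref{Lem::Trivial} \ref{Item::Trivial::Comm} (valid because $\sum_j a_j(-b_j)^k=1$ for $0\le k\le\mu_2\le m_2$), and apply the base case to each modified sequence $(a_j(-b_j)^{\alpha_n})_j$, whose admissibility is guaranteed by Lemma \ref{Lem::Trivial} \ref{Item::Trivial::ab}. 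For the ``down'' direction ($\mu_2=0$, $\mu_1\ge 1$), apply Proposition \ref{Prop::Stuff} \ref{Item::Stuff::SumDer} to decompose $f=\sum_{|\alpha|\le\mu_1}\partial^\alpha g_\alpha$ with $g_\alpha\in\As_{pq}^{s+\mu_1}(\R^n_+)$, then pull out derivatives via the dual commutation $E^{a,b}\partial^\alpha=\partial^\alpha E^{a(-b)^{-\alpha_n},b}$ and take the infimum over decompositions. The extra $b_j^{\pm\alpha_n}$ produced by the modified sequence is absorbed into the hypothesis factor $b_j^{-\max(m_1,\widehat M)}+b_j^{\max(m_2,\widehat M)}$.

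\textbf{Main obstacle.} The principal technical challenge is bookkeeping the operator norm through these three reductions: each step modifies $(a,b)$ by a factor $(-b_j)^{\pm\alpha_n}$ or introduces an index shift $s\pm\eta$, and one must verify that the cumulative factor remains of the form $\sum_j 2^{\delta|j|}|a_j|(b_j^{-\widehat M}+b_j^{\widehat M})$ with $\widehat M$ depending only on $n,p,q,s$. Choosing $\widehat M\ge M_0+\max(\mu_1,\mu_2)+1$ suffices, since for each $|\alpha|\le\max(\mu_1,\mu_2)$ the polynomial factor $b_j^{\pm\alpha_n}$ combined with the dilation factor $b_j^{\pm M_0}$ is dominated by the hypothesis term $b_j^{-\max(m_1,\widehat M)}+b_j^{\max(m_2,\widehat M)}$, which yields \eqref{Eqn::QntBFBdd::ENorm}.
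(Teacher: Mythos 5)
Your overall strategy (base window for the zero extension $S_n$, then integer shifts up/down via derivative commutation and the decomposition of Proposition~\ref{Prop::Stuff}~\ref{Item::Stuff::SumDer}) tracks the paper's architecture and gets the $n=1$ case and the $p\ge1$, $n\ge1$ case essentially right. But there is a genuine gap in how you cover $n\ge2$ with $p<1$. The base window you quote from Remark~\ref{Rmk::OpSBdd} for $S_n$ is $\big(\max(n(\tfrac1p-1),\tfrac1p-1),\tfrac1p\big)$, and for $n\ge 2$ and $p<1$ this interval has width $\tfrac1p-n(\tfrac1p-1)=n-\tfrac{n-1}p<1$; it is in fact empty whenever $p\le\tfrac{n-1}n$. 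So no integer shift $\mu_1-\mu_2$ can be guaranteed to land $s_0$ in the window, and the claim that ``the bound $n\max(0,\tfrac1p-1,\tfrac1q-1)-m_1<s<\tfrac1p+m_2$ guarantees such a choice with $\mu_1\le m_1$, $\mu_2\le m_2$'' fails. This is precisely why the paper, in Steps~A1--A3, first does the zero-extension argument in dimension one (where the $S_1$-window $(\tfrac1p-1,\tfrac1p)$ always has width $1$), then passes to general $n$ via the Fubini property (Proposition~\ref{Prop::Stuff}~\ref{Item::Stuff::Fubini}), using the vector-valued version of the one-dimensional estimate from Remark~\ref{Rmk::PfSobBdd::VectValuedTmp}; the Fubini property requires $s>n\max(0,\tfrac1p-1,\tfrac1q-1)$, which is exactly what the ``downward'' lifting in the final step buys. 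Your proposal contains no substitute for this step.

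There is also a secondary, smaller gap: even when the base window has width exactly $1$ (e.g.\ $p=\infty$, window $(-1,0)$, or $1\le p<\infty$, $n\ge 1$), the window is open, so an integer shift can land $s_0$ exactly on an endpoint — e.g.\ $p=\infty$ and $s\in\Z$, or $1\le p<\infty$ and $s-\tfrac1p\in\Z$. The paper handles these boundary cases by complex interpolation \eqref{Eqn::Stuff::CpxInt} (see the ends of Steps~A2 and B2); you would need to add this. The remaining ingredients you list — the writing $E^{a,b}=S_n+\sum_j a_j\vartheta^{-b_j}S_n$, Lemma~\ref{Lem::TriSum}, Proposition~\ref{Prop::Dila}, the commutation $\partial^\alpha E^{a,b}=E^{a(-b)^{\alpha_n},b}\partial^\alpha$, the Besov case via real interpolation and \eqref{Eqn::RealIntOpNorm} — are all used in the paper's proof and your bookkeeping of the exponent $\widehat M$ is consistent with the paper's; so once the $n=1$ reduction via Fubini and the interpolation at integer shifts are inserted, the argument becomes complete.
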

\begin{rem}
    By Proposition~\ref{Prop::ConsAB}, the sequences $(a_j)_j$ and $(b_j)_j$ can be chosen independently of $p,q,n,s,m_1,m_2$. Note that $\delta$ in \eqref{Eqn::QntBFBdd::CondAB} is allowed to depend on $m_1,m_2,\widehat M_{pq}^{ns}$ (cf. the $\delta_k$ in \eqref{Eqn::CondAB} and Remark~\ref{Rmk::DeltaK}). 
    
    Therefore, letting $m_1,m_2\to+\infty$ we obtain all boundedness properties of $E$ in Theorem~\ref{MainThm} \ref{Item::Thm::BFBdd}.
    
    Additionally, Theorem \ref{Thm::QntBFBdd} also gives the following:
    % For operator norms \eqref{Eqn::QntBFBdd::ENorm}, one can see that $\widehat M_{p,q}^{n,s}$ can be chosen to be independent of $m_1,m_2$, as one can always that $m_1,m_2$ to be the smallest so that $n\max(0,\frac1p-1,\frac1q-1)-m_1<s<\frac1p+m_2$ is satisfies
    
\begin{itemize}
    \item If one take $m_1=0$ and let $m_2\to+\infty$, we see that Seeley's extension operator (see Remark \ref{Rmk::SeeleyCont1}) has boundedness $E:\As_{pq}^s(\R^n_+)\to\As_{pq}^s(\R^n)$ for $\As\in\{\Bs,\Fs\}$, $p,q\in(0,\infty]$ and $s>n\max(0,\frac1p-1,\frac1q-1)$. It is possible that the range for $s$ is not optimal.
    \item If one take $(a_j)_j$ to be finitely nonzero (depending on the upper bound of $m_1,m_2$), then the qualitative result $E:\As_{pq}^s(\R^n_+)\to\As_{pq}^s(\R^n)$ is well-known for \textit{common extension operators}. See \cite[Remark~2.72]{TriebelTheoryOfFunctionSpacesIV}.
\end{itemize}    
\end{rem}

\begin{proof}[Proof of Theorem~\ref{Thm::QntBFBdd} (hence Theorem~\ref{MainThm} \ref{Item::Thm::BFBdd})]Similar to the proof of Proposition~\ref{Prop::Dila}, using \eqref{Eqn::RealIntOpNorm} we only need to prove the case $\As=\Fs$.

To make the notation clear, we use $E_n^{a,b}$ for the extension operator of $\R^n_+$ given in \eqref{Eqn::ExtOp} associated with sequences $a=(a_j)_{j=-\infty}^\infty$ and $b=(b_j)_{j=-\infty}^\infty$.
Recall the zero extension operator $S_nf(x):=\1_{\R^n_+}(x)f(x)$ from \eqref{Eqn::OpS}.
 
% $Sf(t):=f(t)$ if $t>0$ and $Sf(t):=0$ if $t<0$. For $n\ge1$ and $(a,b)=(a_j,b_j)_{j=-\infty}^\infty\subset\R$ that satisfies \eqref{Eqn::CondAB}, we write $E^{a,b}_n$ for the extension operator of $\R^n_+$. % For $r\in\R\backslash\{0\}$ we define the scaling map $\vartheta^rf(t):=f(rt)$. By ?? we have $\|\vartheta^r f\|_{\Fs_{pq}^s(\R)}\lesssim\max(1,r^{s-1/p})\|f\|_{\Fs_{pq}^s(\R)}$ where the implied constant depends only on the $\lambda$ in ??.
% \begin{equation}
%     (E_n^{a,b}f)(x',x_n)=E_1^{a,b}(f(x',\cdot))(x_n),\quad\text{and }\partial^\alpha E_n^{a,b}f=E_n^{a(-b)^{\alpha_n},b}\partial^\alpha f\text{ where }a(-b)^{\alpha_n}=(a_j(-b_j)^{\alpha_n})_{j=-\infty}^\infty.
% \end{equation}

\medskip
\noindent\textsf{Case A}: We consider $p<\infty$ and $s$ suitably large. We sub-divide the discussion into three parts.

\smallskip
\hypertarget{HTag::PfQStep1}{\textsf{Step A1}}: The case $n=1$, $0<p<\infty$, $0<q\le\infty$ and $\frac1p-1<s<\frac1p$, with arbitrary $m_1,m_2\ge0$.

Indeed by \cite[Theorem~2.48]{TriebelTheoryOfFunctionSpacesIV} we have boundedness $S_1:\Fs_{pq}^s(\R_+)\to\Fs_{pq}^s(\R)$ at this range. Clearly $E^{a,b}_1=S_1+\sum_{j=-\infty}^\infty a_j\cdot\vartheta^{-b_j}\circ S_1$. 
Therefore by Lemma~\ref{Lem::TriSum} and Proposition~\ref{Prop::Dila},
\begin{align*}
    \|E^{a,b}_1f\|_{\Fs_{pq}^s(\R)}\lesssim&_{\min(p,q,1),\delta}\|S_1f\|_{\Fs_{pq}^s(\R)}+\sum_{j\in\Z}2^{\delta|j|}|a_j|\|\vartheta^{-b_j}S_1f\|_{\Fs_{pq}^s(\R)}
    \\
    \lesssim&_{p,q,s}\sum_{j\in\Z}2^{\delta|j|}|a_j|\big(b_j^{-M_{pq}^{1s}}+b_j^{M_{pq}^{1s}}\big)\|S_1f\|_{\Fs_{pq}^s(\R)}
    \lesssim_{p,q,s}\sum_{j\in\Z}2^{\delta|j|}|a_j|\big(b_j^{-M_{pq}^{1s}}+b_j^{M_{pq}^{1s}}\big)\|f\|_{\Fs_{pq}^s(\R_+)}.
\end{align*}
% Here we note that $\sum_{j\in\Z}|a_j|\ge1$ and $b_j^{\frac1p-s}+b_j^{s-\frac1p}\ge1$. 
We conclude the case by taking $\widehat M_{pq}^{1s}:=M_{pq}^{1s}$, the constant in Proposition~\ref{Prop::Dila}.

\smallskip
\hypertarget{HTag::PfQStep2}{\textsf{Step A2}}:  The case $n=1$, $0<p<\infty$, $0<q\le\infty$ and $s>\frac1p-1$, with $m_1\ge0$ and $m_2>s-\frac1p$.

By \cite[Theorem~2.3.8 and Remark~3.3.5/2]{TriebelTheoryOfFunctionSpacesI} $\|f\|_{\Fs_{pq}^s(U)}\approx_{p,q,s,m_0}\sum_{k=0}^{m_0}\|\partial^kf\|_{\Fs_{pq}^{s-m_0}(U)}$ holds for every $m_0\ge1$ and $U\in\{\R^n,\R^n_+\}$. Recall from Remark~\ref{Rmk::RmkofTrivialComm} that $\partial^k\circ E_1^{a,b}=E_1^{a(-b)^k,b}\circ\partial^k$ holds for $0\le k\le m_2$. 

When $s-\frac1p\notin\Z$, we let $m_0:=\lceil s-\frac1p\rceil$. Therefore by \hyperlink{HTag::PfQStep1}{Step A1} (since $m_2\ge m_0$),
\begin{align*}
    \|E^{a,b}_1f\|_{\Fs_{pq}^s(\R)}\approx&_{p,q,s}\sum_{k=0}^{m_0}\|E^{a(-b)^k,b}_1\partial^kf\|_{\Fs_{pq}^{s-m_0}}
    \lesssim_{p,q,s,\delta}\sum_{k=0}^{m_0}\sum_{j\in\Z}2^{\delta|j|}|a_j||\big(b_j^{k-\widehat M_{pq}^{1,s-m_0}}+b_j^{k+\widehat M_{pq}^{1,s-m_0}}\big)\|\partial^kf\|_{\Fs_{pq}^{s-m_0}}
    \\
    \lesssim&_{m_0}\sum_{j\in\Z}2^{\delta|j|}|a_j||\big(b_j^{-\widehat M_{pq}^{1,s-m_0}}+b_j^{m_0+\widehat M_{pq}^{1,s-m_0}}\big)\|f\|_{\Fs_{pq}^s(\R_+)}.
\end{align*}
In this case we can take $\widehat M_{pq}^{1s}:=\widehat M_{pq}^{1,s-m_0}+m_0=\widehat M_{pq}^{1,s-\lceil s-\frac1p\rceil}+\lceil s-\frac1p\rceil$.

The case $s-\frac1p\in\Z_+$ follows from the $\pm$ interpolation \eqref{Eqn::Stuff::CpxInt}, where we can take $\widehat M_{pq}^{1s}:=\max(\widehat M_{pq}^{1,s-\frac12},\widehat M_{pq}^{1,s+\frac12})$. %with $m_0:=\lceil s-\frac1p\rceil$.

\smallskip
\hypertarget{HTag::PfQStep3}{\textsf{Step A3}}: The case $n\ge1$, $0<p<\infty$, $0<q\le\infty$ and $s>n\max(0,\frac1p-1,\frac1q-1)$, with $m_1\ge0$ and $m_2>s-\frac1p$.

Indeed we have $(E_n^{a,b}f)(x)=E_1^{a,b}(f(x',\cdot))(x_n)$. Therefore, for any extension $\tilde f\in\Fs_{pq}^s(\R^n)$ of $f\in\Fs_{pq}^s(\R^n_+)$,
\begin{align*}
    &\|E_n^{a,b}f\|_{\Fs_{pq}^s(\R^n)}
    \\
    \approx&_{p,q,s}\Big(\int_{\R^{n-1}}\|E_1^{a,b}(f(x',\cdot))\|_{\Fs_{pq}^s(\R)}^pdx'\Big)^{1/p}+\Big(\int_{\R}\|(E_n^{a,b}f)(\cdot,x_n)\|_{\Fs_{pq}^s(\R^{n-1})}^pdx_n\Big)^{1/p}\hspace{-0.3in}&(\text{by Proposition~\ref{Prop::Stuff} \ref{Item::Stuff::Fubini}})
    \\
    \le% 要不要加个下标% Nope
    &\|E_1^{a,b}\|_{\Fs_{pq}^s}\Big(\int_{\R^{n-1}}\|f(x',\cdot)\|_{\Fs_{pq}^s(\R_+)}^pdx'\Big)^{1/p}+ \|E_1^{a,b}f\|_{L^p(\R_{x_n};\Fs_{pq}^s(\R^{n-1}_{x'}))}
    % \sum_{j=-\infty}^\infty 2^{\delta|j|}|a_j|\Big(\int_{\R_+}\|f(\cdot,-b_jx_n)\|_{\Fs_{pq}^s(\R^{n-1})}^pdx_n\Big)^{\frac1p}
    \\
    \lesssim&_{p,q,s}\|E_1^{a,b}\|_{\Fs_{pq}^s}\|f\|_{L^p(\R^{n-1};\Fs_{pq}^s(\R_+))}+\|E_1^{a,b}\|_{L^p(\Fs_{pq}^s)}\|f\|_{L^p(\R_+;\Fs_{pq}^s(\R^{n-1}))}
    \\
    \le&_{p,q,s,\delta}\sum_{j\in\Z}2^{\delta|j|}|a_j|\big(b_j^{-\widehat M_{pq}^{1s}}+b_j^{\widehat M_{pq}^{1s}}\big)\|\tilde f\|_{L^p(\R^{n-1};\Fs_{pq}^s)}+\sum_{j\in\Z}2^{\delta|j|}|a_j|b_j^{-\frac1p}\|\tilde f\|_{L^p(\R;\Fs_{pq}^s)}&(\text{by \hyperlink{HTag::PfQStep2}{Step A2} and Remark~\ref{Rmk::PfSobBdd::VectValuedTmp}})
    \\
    \lesssim&_{p,q,s}\sum_{j\in\Z}2^{\delta|j|}|a_j|\big(b_j^{-\widehat M_{pq}^{1s}-\frac1p}+b_j^{\widehat M_{pq}^{1s}}\big)\|\tilde f\|_{\Fs_{pq}^s(\R^n)}&(\text{by Proposition~\ref{Prop::Stuff} \ref{Item::Stuff::Fubini}}).
\end{align*}
%Here the operator norm on vector-valued function space $\|E_1^{a,b}\|_{L^p(\Fs_{pq}^s)}\lesssim_{p,q,\delta}\sum_{j\in\Z}2^{\delta|j|}|a_j|b_j^{-\frac1p}$ comes from Remark~\ref{Rmk::PfSobBdd::VectValuedTmp}.

Taking the infimum over all extensions $\tilde f$ of $f$, we get the claim with $\widehat M_{pq}^{ns}:=\widehat M_{pq}^{1s}+1/p$.

\medskip
\noindent\textsf{Case B}: We consider $p=\infty$, i.e. the $\Fs_{\infty q}^s$-cases, for $s>-1$.

\smallskip
\hypertarget{HTag::PfQStep1'}{\textsf{Step B1}}: The case $n\ge1$, $p=\infty$, $0<q\le\infty$ and $-1<s<0$, with arbitrary $m_1,m_2\ge0$.

The proof is identical to \hyperlink{HTag::PfQStep1}{Step A1}, where we use $E^{a,b}_n=S_n+\sum_{j=-\infty}^\infty a_j\cdot\vartheta^{-b_j}\circ S_n$ and $S_n:\Fs_{pq}^s(\R^n_+)\to\Fs_{pq}^s(\R^n)$ (see \cite[Theorem~2.48]{TriebelTheoryOfFunctionSpacesIV}). We get the result with $\widehat M_{\infty q}^{ns}:=M_{\infty q}^{ns}$, the constant in Proposition~\ref{Prop::Dila}.

\smallskip
\hypertarget{HTag::PfQStep2'}{\textsf{Step B2}}:  The case $n\ge1$, $p=\infty$, $0<q\le\infty$ and $s>-1$, with $m_1\ge0$ and $m_2>s$.

It remains to prove the case $s\ge0$. The proof is similar to \hyperlink{HTag::PfQStep2}{Step A2}, except we use (see \cite[Theorem~1.24]{TriebelTheoryOfFunctionSpacesIV}) $\|f\|_{\Fs_{\infty q}^s(\R^n)}\approx_{p,q,s,m_0}\sum_{|\gamma|\le m_0}\|\partial^\gamma f\|_{\Fs_{pq}^{s-m_0}(\R^n)}$. When $s\notin\Z_+$, since $m_2\ge\lceil s\rceil$ we get from \hyperlink{HTag::PfQStep1'}{Step B1} that $$\|E_n^{a,b}f\|_{\Fs_{pq}^s(\R^n)}\lesssim_{q,s,\delta}\sum_{j\in\Z}2^{\delta|j|}|a_j||\big(b_j^{-\widehat M_{\infty q}^{n,s-\lceil s\rceil}}+b_j^{\lceil s\rceil+\widehat M_{\infty q}^{n,s-\lceil s\rceil}}\big)\|\tilde f\|_{\Fs_{\infty q}^s(\R^n)},\quad\text{ whenever }\tilde f|_{\R^n_+}=f.$$
Taking infimum of the extensions $\tilde f$ over $f$ we get the claim with $\widehat M_{\infty q}^{ns}:=\widehat M_{\infty q}^{n,s-\lceil s\rceil}+\lceil s\rceil$.

The case $s\in\Z_+$ follows from the $\pm$ interpolation \eqref{Eqn::Stuff::CpxInt}, where we can take $\widehat M_{\infty q}^{ns}:=\max(\widehat M_{\infty q}^{n,s-\frac12},\widehat M_{\infty q}^{n,s+\frac12})$.

\smallskip
\noindent\hypertarget{HTag::PfQStep4}{\textsf{Final step}}: The general case: $n\ge1$, $0<p,q\le\infty$ and $s\in\R$, with $m_1>n\max(0,\frac1p-1,\frac1q-1)-s$ and $m_2>s-\frac1p$. Here we can take $m_1:=\lfloor n\max(0,\frac1p-1,\frac1q-1)-s\rfloor+1$.

By Remark~\ref{Rmk::RmkofTrivialComm} again we have $E_n^{a,b}\circ\partial^\gamma=\partial^\gamma\circ E_n^{a(-b)^{-\gamma_n},b}$ for all $|\gamma|\le m_1$, where $a(-b)^{-\gamma_n}=(a_j(-b_j)^{-\gamma_n})_{j=-\infty}^\infty$. Therefore when $f\in\Fs_{pq}^s(\R^n_+)$ and $g_\gamma\in\Fs_{pq}^{s+m_1}(\R^n_+)$ satisfies $f=\sum_{|\gamma|\le m_1}\partial^\gamma g_\gamma$, by Steps \hyperlink{HTag::PfQStep3}{A3} and \hyperlink{HTag::PfQStep3'}{B2} we have
\begin{equation}
\begin{aligned}
    &\|E_n^{a,b}f\|_{\Fs_{pq}^s(\R^n)}=\Big\|\sum_{|\gamma|\le m_1}\partial^\gamma\circ E_n^{a(-b)^{-\gamma_n},b}g_\gamma\Big\|_{\Fs_{pq}^s(\R^n)}
    \\
    \lesssim&_{\min(p,q,1),m_1}\sum_{|\gamma|\le m_1}\|\partial^\gamma\circ E_n^{a(-b)^{-\gamma_n},b}g_\gamma\|_{\Fs_{pq}^s(\R^n)}
    \\
    \lesssim&_{p,q,s,m_1}\sum_{|\gamma|\le m_1}\|E_n^{a(-b)^{-\gamma_n},b}g_\gamma\|_{\Fs_{pq}^{s+m_1}(\R^n)}&(\text{see \cite[(2.3.8/3)]{TriebelTheoryOfFunctionSpacesI}})
    \\
    \lesssim&_{p,q,s,m_1,\delta}\sum_{|\gamma|\le m_1}\sum_{j\in\Z}2^{\delta|j|}|a_j|\big(b_j^{-\widehat M_{pq}^{n,s+m_1}-\gamma_n}+b_j^{\widehat M_{pq}^{n,s+m_1}-\gamma_n}\big)\|g_\gamma\|_{\Fs_{pq}^{s+m_1}(\R^n)}&(\text{by Steps \hyperlink{HTag::PfQStep3}{A3} and \hyperlink{HTag::PfQStep3'}{B2}}).
\end{aligned}
\end{equation}
Taking the infimum over the decompositions $f=\sum_{|\alpha|\le m}\partial^\alpha g_\alpha$, by Proposition~\ref{Prop::Stuff} \ref{Item::Stuff::SumDer} we obtain the claim with $\widehat M_{pq}^{ns}:=\widehat M_{pq}^{n,s+m_1}+m_1=\widehat M_{pq}^{n,s+\lfloor n\max(0,\frac1p-1,\frac1q-1)-s\rfloor+1}+\lfloor n\max(0,\frac1p-1,\frac1q-1)-s\rfloor+1$. This completes the proof of $\As=\Fs$. The case $\As=\Bs$ is done by real interpolations, as discussed in the beginning of the proof.
\end{proof}

% For completeness we include the discussion of $E^*$.
% \begin{proof}[Proof of Theorem~\ref{MainThm} \ref{Item::Thm::BFBdd}]
% The boundedness $E:\As_{pq}^s(\R^n_+)\to\As_{pq}^s(\R^n)$ is the qualitative version of Theorem~\ref{Thm::QntBFBdd}.

% To prove the boundedness of $E^*$ we use the similar argument to \eqref{Eqn::PfSobBdd::E*}, where for $g\in\Ss'(\R^n)$ we see that $g+\sum_{j\in\Z}\frac{a_j}{b_j}\delta^{-1/b_j}g$ is the extension of $E^*g\in\Ss'(\R^n)$. Therefore by Proposition~?????? 
% \end{proof}

We now prove Theorem \ref{MainThm} \ref{Item::Thm::MorreyBdd}, the boundedness on   $\Bs_{pq}^{s\tau}$, $\Fs_{pq}^{s\tau}$ and $\Ns_{pq}^{s\tau}$, using Theorem \ref{Thm::QntBFBdd}.

In the following, for $m\in\Z_+$ and $r>0$, we define a sequence $A^{mr}=(A^{mr}_j)_{j=0}^{2m}$ to be the unique sequence such that $\sum_{j=0}^{2m}A^{mr}_j(-2^{-j}r)^k=1$ for integers $-m\le k\le m$. Recall from Remark~\ref{Rmk::SeeleyCont2} we have 
\begin{equation}
    A^{mr}_j=(-r2^{-j})^m\prod_{0\le k\le 2m;k\neq j}\frac{r2^{-k}+1}{r2^{-k}-r2^{-j}}=(-1)^m\frac{2^{-jm}}{r^m}\prod_{0\le k\le 2m;k\neq j}\frac{r2^{-k}+1}{2^{-k}-2^{-j}},\quad 0\le j\le 2m.
\end{equation}
We see that $|A^{mr}_j|\lesssim_m r^m+r^{-m}$ for $0\le j\le 2m$, where the implicit constant does not depend on $r>0$.

We define
\begin{equation}\label{Eqn::ExtEb}
    \Eb^{m,r}f(x',x_n)=\Eb^{m,r}_nf(x',x_n):=\begin{cases}\sum_{j=0}^{2m}A^{mr}_jf(x',-r2^{-j}x_n)&x_n<0\\f(x)&x_n>0\end{cases}.
\end{equation}

By Theorem~\ref{Thm::QntBFBdd}, for every $0<p,q\le\infty$, $s\in\R$ and % times n
$m>\max(s-\frac1p,-s,n(\frac1p-1)-s,n(\frac1q-1)-s)$, there is a $C=C_{n,p,q,s,m}>0$, such that $\Eb^{m,r}:\As_{pq}^s(\R^n_+)\to\As_{pq}^s(\R^n)$ is bounded for $\As\in\{\Bs,\Fs\}$ and for all $r>0$, with
\begin{equation}\label{Eqn::OpNormEb}
    \|\Eb^{m,r}\|_{\As_{pq}^s(\R^n_+)\to\As_{pq}^s(\R^n)}\le C_{n,p,q,s,m}(r^{-m\cdot\widehat M_{pq}^{ns}}+r^{m\cdot\widehat M_{pq}^{ns}}),\quad\forall\, r>0.
\end{equation}
Here $\widehat M_{pq}^{ns}>0$ is the corresponding constant in Theorem~\ref{Thm::QntBFBdd}.

\begin{proof}[Proof of Theorem~\ref{MainThm} \ref{Item::Thm::MorreyBdd}]
We only need to prove the case $\As\in\{\Bs,\Fs\}$. Indeed, when $\tau<\frac1p$ (hence $p<\infty$), the boundedness of $\Ns_{pq}^{s\tau}$ can be deduced from real interpolations \eqref{Eqn::Stuff::RealInt2}. When $\tau=\frac1p$ we have coincidence $\Ns_{pq}^{s\frac1p}=\Bs_{\infty q}^s$ (see \eqref{Eqn::EqvSpaces2}), which is treated in Theorem~\ref{MainThm} \ref{Item::Thm::BFBdd}.

We begin with the case $s<0$.

Recall from Proposition~\ref{Prop::Stuff} \ref{Item::Stuff::FInftyQ}, $\|\tilde f\|_{\As_{pq}^{s\tau}(\R^n)}\approx_{n,p,q,s,\tau}\sup_{x\in\R^n;R>0}R^{-n\tau}\|\tilde f\|_{\As_{pq}^s(B(x,R))}$ for $\As\in\{\Bs,\Fs\}$ ($p<\infty$ if $\As=\Fs$) under the assumption $s<0$.

Clearly if $x_n\ge R$, then $\|Ef\|_{\As_{pq}^s(B(x,R))}=\|f\|_{\As_{pq}^s(B(x,R))}$. 

If $x_n\le -R$, then $(E^{a,b}f)|_{B(x,R)}=\sum_{j=-\infty}^\infty a_j\cdot\vartheta^{-b_j}f$.  Therefore, for every extension $\tilde f\in\As_{pq}^{s\tau}(\R^n)$ of $f$,
\begin{equation}\label{Eqn::PfMorreyBdd::DilSum}
\begin{aligned}
    &R^{-n\tau}\|E^{a,b}f\|_{\As_{pq}^s(B(x,R))}\lesssim_{p,q,\delta}R^{-n\tau}\sum_{j=-\infty}^\infty 2^{\delta|j|} |a_j|\|\vartheta^{-b_j}f\|_{\As_{pq}^s(B(x,R))}&(\text{by Lemma~\ref{Lem::TriSum}})
    \\
    \le &R^{-n\tau}\sum_{j=-\infty}^\infty 2^{\delta|j|} |a_j|(b_j^{-M_{pq}^{ns}}+b_j^{M_{pq}^{ns}})\|f\|_{\As_{pq}^s\left(B\left((x',-\frac{x_n}{b_j}),\max(R,\frac R{b_j})\right)\cap\R^ n_+\right)}&(\text{by Proposition~\ref{Prop::Dila}})
    \\
    \le&\sum_{j=-\infty}^\infty 2^{\delta|j|} |a_j|(b_j^{-M_{pq}^{ns}}+b_j^{M_{pq}^{ns}})\max(R,\tfrac R{b_j})^{-n\tau}\sup_{y\in\R^n}\|\tilde f\|_{\As_{pq}^s\big(B(y,\max(R, R/{b_j}))\big)}%\hspace{-0.1in}
    \\
    \lesssim&_{n,p,q,s,a,b}\sup_{y\in\R^n;\rho>0}\rho^{-n\tau}\|\tilde f\|_{\As_{pq}^s(B(y,\rho))}\approx_{n,p,q,s,\tau} \|\tilde f\|_{\As_{pq}^{s\tau}(\R^n)}&(\text{by Proposition~\ref{Prop::Stuff} \ref{Item::Stuff::FInftyQ}}).
\end{aligned}    
\end{equation}

Here $M_{pq}^{ns}$ is the constant in Proposition~\ref{Prop::Dila}. Taking the infimum of the extension $\tilde f$ over $f$ we see that 
$$R^{-n\tau}\|Ef\|_{\As_{pq}^s(B(x,R))}\lesssim_{n,p,q,s,\tau,a,b}\|f\|_{\As_{pq}^{s\tau}(\R^n_+)}\text{ uniformly for }R>0\text{ and }x_n\le -R,\text{ and hence for all }|x_n|\ge R.$$

\smallskip
The difficult part is to prove the estimate for $|x_n|\le R$. In this case $B(x,R)\subset B((x',0),2R)$, thus $R^{-n\tau}\|Ef\|_{\As_{pq}^s(B(x,R))}\le2^{n\tau}(2R)^{-n\tau}\|Ef\|_{\As_{pq}^s(B((x',0),2R))}$. Since $E$ is translation invariant in $x'$, we can assume $x'=0$ without loss of generality. 

To summarize the discussion above, to prove the boundedness $E:\As_{pq}^{s\tau}(\R^n_+)\to\As_{pq}^{s\tau}(\R^n)$ for $0<p,q\le\infty$, $s<0$ and $0\le\tau\le\frac1p$ ($p<\infty$ if $\As=\Fs$), it remains to show the existence of $C=C_{n,p,q,s,\tau,a,b}>0$ such that
\begin{equation}\label{Eqn::PfMorreyBdd::Key}
    R^{-n\tau}\|Ef\|_{\As_{pq}^s(B(0,R))}\le C\sup_{\rho>0}\rho^{-n\tau}\|\tilde f\|_{\As_{pq}^s(B(0,\rho))},\quad\forall\, R>0,\quad\forall\, \tilde f\in\As_{pq}^s(\R^n)\text{ such that }\tilde f|_{\R^n_+}=f.
\end{equation}

% Here by \eqref{Eqn::CondAB} the sum $\sum_{j=-\infty}^\infty 2^{\delta|j|} |a_j|(b_j^{-M_{pq}^{ns}-n\tau}+b_j^{M_{pq}^{ns}+n\tau})<\infty$. Thus $R^{-n\tau}\|E_n^{a,b}f\|_{\As_{pq}^s(B(x,R))}\lesssim\|f\|_{\As_{pq}^{s\tau}(\R^n_+)}$.

In the following we let 
$$B_+(0,R):=B(0,R)\cap\R^n_+;\quad J_+:=\{j\in\Z:b_j>1\},\quad J_-:=\{j\in\Z:b_j\le 1\},\quad\text{thus}\quad\Z=J_+\amalg J_-.$$ 

We fix a $R>0$ temporally.

For each $j\in J_+$, take an extension $\tilde f_j=\tilde f_j^R\in\As_{pq}^s(\R^n)$ of $f|_{B_+(0,b_jR)}$ such that $\|\tilde f_j\|_{\As_{pq}^s(\R^n)}\le2\|f\|_{\As_{pq}^s(B_+(0,b_jR))}$. We let $\tilde f_-=\tilde f_-^R\in\As_{pq}^s(\R^n)$ be an extension of $f|_{B_+(0,R)}$ such that $\|\tilde f_-\|_{\As_{pq}^s(\R^n)}\le2\|f\|_{\As_{pq}^s(B_+(0,R))}$. % We denote $\tilde f_j:=\tilde f_-$ for $j\in J_-$.

Clearly we have the following extension of $Ef|_{B(0,R)\cap\R^n_-}$:
$$\tilde f_{<0}=\tilde f_{<0}^R:=\sum_{j\in J_+} a_j\cdot\vartheta^{-b_j}\tilde f_j+\sum_{j\in J_-}a_j\cdot\vartheta^{-b_j}\tilde f_-.$$
However $\tilde f_{<0}$ is not the extension of $f|_{B_+(0,R)}=Ef|_{B_+(0,R)}$. To modify it on the domain $B_+(0,R)$ we consider 
\begin{equation*}
    \tilde f_{>0}=\tilde f_{>0}^R:=\tilde f_-+\sum_{j\in J_+}\vartheta^{-1}\circ\Eb^{m,\frac1{b_j}}\big[\vartheta^{b_j}(\tilde f_j-\tilde f_-)\big]=\tilde f_-+\sum_{j\in J_+}\sum_{k=0}^{2m}A^{m,\frac1{b_j}}_k\cdot(\vartheta^{2^{-k}}\tilde f_j-\vartheta^{2^{-k}}\tilde f_-)\quad\text{in }\R^n_+,
\end{equation*}
where % times n
we have chosen $m>\max(s-\frac1p,-s,n(\frac1p-1)-s,n(\frac1q-1)-s)$, and $\Eb^{m,1/b_j}$ is given by \eqref{Eqn::ExtEb}.

When $j\in J_+$ (i.e $b_j>1$) we have $(\tilde f_j-\tilde f_-)|_{B(0,R)}=0$. Thus in this case $\Eb^{m,\frac1{b_j}}[\vartheta^{-b_j}(\tilde f_j-\tilde f_-)]\big|_{B(0,R)}\equiv0$ (since $2^{-k}/b_j<1$ for $j\in J_+$ and $k\ge0$). We conclude that $\tilde f_{>0}|_{B_+(0,R)}=\tilde f_-|_{B_+(0,R)}=f|_{B_+(0,R)}$.

On the other hand we can rewrite $\tilde f_{<0}$ as 
\begin{equation}
    \tilde f_{<0}=\sum_{j=-\infty}^\infty a_j\cdot\vartheta^{-b_j}\tilde f_-+\sum_{j\in J_+}a_j\cdot\vartheta^{-b_j}(\tilde f_j-\tilde f_-)=E^{a,b}\tilde f_-+\sum_{j\in J_+}a_j\cdot\vartheta^{-1}\circ\vartheta^{b_j}(\tilde f_j-\tilde f_-)\quad\text{on }\R^n_-.
\end{equation}
Here for global defined function $\tilde g$, we use the notation $E\tilde g$ for $E[(\tilde g|_{\R^n_+})]$.

Therefore, the function $\tilde F=\tilde F^R:=\big(\tilde f_{>0}|_{\R^n_+}\big)\cup\big(\tilde f_{<0}|_{\R^n_-}\big)$ is  an extension of $Ef|_{B(0,R)}$ and has the expression
$$\tilde F=E^{a,b}\big[\tilde f_-\big]+\sum_{j\in J_+}a_j\cdot\vartheta^{-1}\circ\Eb^{m,\frac1{b_j}}\big[\vartheta^{b_j}(\tilde f_j-\tilde f_-)\big]\in\Ss'(\R^n).$$

We see that
\begin{align*}
    \|\tilde F\|_{\As_{pq}^s(\R^n)}\lesssim&_{p,q,\delta}\|E^{a,b}\tilde f_-\|_{\As_{pq}^s(\R^n)}+\sum_{j\in J_+}2^{\delta|j|}|a_j|\|\Eb^{m,\frac1{b_j}}\|_{\As_{pq}^s}\|\vartheta^{b_j}(\tilde f_j-\tilde f_-)\|_{\As_{pq}^s(\R^n)}\hspace{-0.2in}&(\text{by Lemma~\ref{Lem::TriSum}})
    \\
    \lesssim&_{n,p,q,s,m}C'_{\delta, a, b,\widehat M}\|\tilde f_-\|_{\As_{pq}^s(\R^n_+)}+\sum_{j\in J_+}2^{\delta|j|}|a_j|b_j^{m\cdot\widehat M_{pq}^{ns}}\|\vartheta^{b_j}(\tilde f_j-\tilde f_-)\|_{\As_{pq}^s}\hspace{-0.05in}&(\text{by Theorem~\ref{Thm::QntBFBdd} and \eqref{Eqn::OpNormEb}})
    \\
    \lesssim&_{n,p,q,s,a,b,\delta,\widehat M}\|\tilde f_-\|_{\As_{pq}^s(\R^n)}+\sum_{j\in J_+}2^{\delta|j|}|a_j|b_j^{m\cdot\widehat M_{pq}^{ns}}b_j^{M_{pq}^{ns}}\|\tilde f_j-\tilde f_-\|_{\As_{pq}^s(\R^n)}&(\text{by Proposition~\ref{Prop::Dila}})
    \\
    \lesssim&_{\min(1,p,q)}\|f\|_{\As_{pq}^s(B_+(0,R))}+\sum_{j\in J_+}2^{\delta|j|}|a_j|b_j^{m\cdot\widehat M_{pq}^{ns}+M_{pq}^{ns}}\big(\|f\|_{\As_{pq}^s(B_+(0,R))}+\|f\|_{\As_{pq}^s(B_+(0,b_jR))}\big).\hspace{-2in}
\end{align*}
Here $C'=C'_{\delta, a, b,\widehat M}=\sum_{j\in\Z}2^{\delta|j|}|a_j|\big(b_j^{\widehat M_{pq}^{ns}}+b_j^{-\widehat M_{pq}^{ns}}\big)$.
Notice that all implied constants in the above computation do not depend on the $R>0$.

Therefore for general $R>0$, applying \eqref{Eqn::CondAB} again with a small $\delta$, we see that for every extension $\tilde f$ of $f$,
\begin{align*}
    &R^{-n\tau}\|Ef\|_{\As_{pq}^s(B(0,R))}\le R^{-n\tau}\|\tilde F^R\|_{\As_{pq}^s(\R^n)}
    \\
    \lesssim&_{p,q,s,m,\delta}C'_{\delta a b\widehat M}\cdot R^{-n\tau}\|f\|_{\As_{pq}^s(B_+(0,R))}+\sum_{j\in J_+}2^{\delta|j|}|a_j|b_j^{C''_{npqs\tau m}}(R^{-n\tau}\|f\|_{\As_{pq}^s(B_+(0,R))}+(b_jR)^{-n\tau}\|f\|_{\As_{pq}^s(B_+(0,b_jR))})
    \\
    \lesssim&_{a,b,n,\tau,m,\widehat M,M,\delta}\sup_{\rho\in\{1,b_j:b_j>1\}}\rho^{-n\tau}\|\tilde f\|_{\As_{pq}^s(B(0,\rho))}\lesssim_{n,p,q,s,\tau}\|\tilde f\|_{\As_{pq}^{s\tau}(\R^n)}.
\end{align*}
Here $C''=C''_{n,p,q,s,\tau,m}:=m\cdot\widehat M_{pq}^{ns}+M_{pq}^{ns}+n\tau$ and we let $\delta>0$ be such that $\sum_{j\in\Z}2^{\delta|j|}|a_j|b_j^{\lceil C''\rceil}<\infty$. Taking the infinmum over the extension $\tilde f$, we obtain \eqref{Eqn::PfMorreyBdd::Key} and conclude the proof of the case $s<0$.

\smallskip For $s\ge0$, we use the following equivalent norm (see \cite[Theorem~1.6]{WuYangYuanDer}):
\begin{equation}\label{Eqn::EqvNormDer}
    \|\tilde f\|_{\As_{pq}^{s\tau}(\R^n)}\approx_{n,p,q,s,\tau,m}\sum_{|\gamma|\le m}\|\partial^\gamma \tilde f\|_{\As_{pq}^{s-m,\tau}(\R^n)},\quad\forall\, m\ge1.
\end{equation}
Using Lemma~\ref{Lem::Trivial} \ref{Item::Trivial::Comm}, and taking $m\ge s+1$ we have
\begin{equation}\label{Eqn::PfMorreyBdd::DerArg}
    \|E^{a,b}f\|_{\As_{pq}^{s\tau}(\R^n)}\approx\sum_{|\gamma|\le m}\|E^{a(-b)^{\gamma_n},b}\partial^\gamma f\|_{\As_{pq}^{s-m,\tau}(\R^n)}\lesssim\sum_{|\gamma|\le m}\|\partial^\gamma f\|_{\As_{pq}^{s-m,\tau}(\R^n_+)}\lesssim\|f\|_{\As_{pq}^{s\tau}(\R^n_+)}.
\end{equation}
This completes the proof of $E:\As_{pq}^{s\tau}(\R^n_+)\to\As_{pq}^{s\tau}(\R^n)$ for $\As\in\{\Bs,\Fs\}$ and all $0<p,q\le \infty$, $s\in\R$ and $0\le \tau\le\frac1p$ ($p<\infty$ if $\As=\Fs$). The case $\As=\Ns$ follows from real interpolations, as mentioned in the beginning of the proof.
\end{proof}
% Finally, the boundedness of $E^*$ follows from the similar argument to \eqref{Eqn::PfMorreyBdd::DilSum}. For $g\in\Ss'(\R^n)$, $g+\sum_j\frac{a_j}{b_j}\vartheta^{-1/b_j}g$ is an extension of $E^*g$. Using \eqref{Eqn::EqvNormDer} with $m>s$ and Proposition~\ref{Prop::Stuff} \ref{Item::Stuff::FInftyQ} we have 
% \begin{align*}
%     &\|E^*g\|_{\As_{pq}^{s\tau}(\R^n_+)}\le\textstyle\|g+\sum_j\frac{a_j}{b_j}\cdot\vartheta^{-1/b_j}g\|_{\As_{pq}^{s\tau}(\R^n)}\approx\sum_{|\gamma|\le m}\big\|\partial^\gamma g-\sum_ja_j(-b_j)^{-1-\gamma_n}\cdot\vartheta^{-1/b_j}[\partial^\gamma g]\big\|_{\As_{pq}^{s-m,\tau}(\R^n)}
%     \\
%     \lesssim&\sum_{|\gamma|\le m}\sup_{x\in\R^n,R>0}R^{-n\tau}\Big(\|\partial^\gamma g\|_{\As_{pq}^{s-m}(B(x,R))}+\sum_j2^{\delta|j|}\tfrac{|a_j|}{b_j^{\gamma_n+1}}\|\vartheta^{-1/b_j}\partial^\gamma g\|_{\As_{pq}^{s-m}(B(x,R))}\Big)
%     \\
%     \lesssim&\sum_{|\gamma|\le m}\sup_{x\in\R^n,R>0}R^{-n\tau}\Big(\|\partial^\gamma g\|_{\As_{pq}^{s-m}(B(x,R))}+\sum_j2^{\delta|j|}\tfrac{|a_j|}{b_j^{\gamma_n+1}}\big(b_j^{-M_{pq}^{ns}}+b_j^{M_{pq}^{ns}}\big)\|\partial^\gamma g\|_{\As_{pq}^{s-m}\big(B(\vartheta^{-b_j}x,\max(R,b_jR))\big)}\Big)
%     \\
%     \lesssim&\sum_{|\gamma|\le m}\sup_{y\in\R^n,\rho>0}\rho^{-n\tau}\|\partial^\gamma g\|_{\As_{pq}^{s-m}(B(y,\rho))}\approx\sum_{|\gamma|\le m}\|\partial^\gamma g\|_{\As_{pq}^{s-m,\tau}(\R^n)}\approx\|g\|_{\As_{pq}^{s\tau}(\R^n)}.
% \end{align*}

% The proof is now complete.

\begin{rem}
    By keeping track of the constants, and possibly enlarging the constant $\widehat M=\widehat M_{p,q}^{n,s}$ in Theorem~\ref{Thm::QntBFBdd}, it is likely to prove that \eqref{Eqn::QntBFBdd::ENorm} is also true if we replace $\As_{pq}^s$ by $\As_{pq}^{s\tau}$ for every $0\le \tau\le\frac1p$ and $\As\in\{\Bs,\Fs,\Ns\}$ ($p<\infty$ for $\Fs$-cases).
\end{rem}

\section{The Analogy on Bounded Smooth Domains}
On bounded smooth domains the analogy of Theorem~\ref{MainThm} is the following:
\begin{thm}[Universal Seeley's extension on smooth domains]\label{Thm::Domain}
Let $\Omega\subset\R^n$ be a bounded smooth domain with a smooth defining function\footnote{A defining function of $\Omega$ is a real function (at least $C^1$) $\sigma:\R^n\to\R$ such that $\Omega=\{x:\sigma(x)<0\}$ and $\nabla\sigma|_{\partial\Omega}\neq0$.} $\sigma$, and let $\Uc\subset\R^n$ be a bounded neighborhood of $\overline\Omega$. Then there exists an extension operator $\Ec:\Ss'(\Omega)\to\Es'(\Uc)$ that maps extendable distributions in $\Omega$ (recall it from Remark~\ref{Rmk::DefSs} \ref{Item::DefSs::BddDom}) to compactly supported distributions in $\Uc$, such that
\begin{enumerate}[(i)]
    \item{\normalfont(One-parameter dependence)}\label{Item::Domain::Dep} For every $x\in \Uc\backslash\overline\Omega$ and $f\in C^0(\overline\Omega)$, the value $\Ec f(x)$ depends only on the value of $f$ on the (unique) integral curve for $\nabla\sigma$ passing through $x$ that contains in $\Omega$, which is, the set 
    $$\{\gamma^x(t):t\in\R,\, \gamma^x(t)\in\Omega\}\subset \Uc,\text{ where }\dot\gamma^x(t)=\nabla\sigma(\gamma^x(t))\ \forall t\in\R\text{ and }\gamma^x(0)=x.$$
    
    \item{\normalfont(Universal boundedness)}\label{Item::Domain::Bdd} $\Ec:W^{k,p}(\Omega)\to W^{k,p}_c(\Uc)$ and $\Ec:C^{k,s}(\Omega)\to C^{k,s}_c(\Uc)$ are defined and bounded for all $k\in\Z$, $0<p\le\infty$ and $0<s<1$. In particular $\Ec:C^k(\Omega)\to C^k_c(\Uc)$ for $k\ge0$.
    
    $\Ec:\As_{pq}^{s\tau}(\Omega)\to \As_{pq}^{s\tau}(\Uc)$ is bounded for $\As\in\{\Bs,\Fs,\Ns\}$, $p,q\in(0,\infty]$, $s\in\R$  and $\tau\in[0,\frac1p]$ ($p<\infty$ for $\Fs$-cases). In particular $\Ec:\As_{pq}^s(\Omega)\to \As_{pq}^s(\Uc)$ is bounded for $\As\in\{\Bs,\Fs\}$, $0<p,q\le\infty$ and $s\in\R$.
\end{enumerate}
\end{thm}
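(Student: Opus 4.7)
The plan is to construct $\Ec$ by straightening the flow of a single vector field near $\partial\Omega$ and invoking Theorem~\ref{MainThm} in the straightened coordinates, so that the one-parameter dependence of \ref{Item::Domain::Dep} is built in by design. Fix a smooth compactly supported vector field $X\in C_c^\infty(\Uc;\R^n)$ transverse to $\partial\Omega$ and pointing out of $\Omega$. Its flow $\Phi_X^t$ is globally defined on $\R^n$ and the map $(p,\tau)\mapsto\Phi_X^{-\tau}(p)$ is a diffeomorphism from $\partial\Omega\times(-T_0,T_0)$ onto a tubular neighborhood of $\partial\Omega$, sending $\tau>0$ into $\Omega$ and $\tau<0$ into $\Uc\setminus\overline\Omega$. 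To accommodate the arbitrarily large sampling depths produced by the infinite Seeley sum, we reparametrize the inside direction by a smooth bijection $\sigma:(0,\infty)\to(0,T_0)$ with $\sigma(u)=u$ for $u$ small, and set $\Psi(p,u):=\Phi_X^{-\sigma(u)}(p)$ for $u>0$ and $\Psi(p,u):=\Phi_X^{-u}(p)$ for $u\in(-T_0/2,0]$. Then $\Psi$ is a diffeomorphism from $\partial\Omega\times(-T_0/2,\infty)$ onto an open neighborhood $W$ of $\partial\Omega$ in $\R^n$ with $W\subset\Uc$, and each integral curve of $X$ meeting $W$ coincides as a set with $\{\Psi(p,\cdot)\}$ for a unique $p\in\partial\Omega$.

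Pick $\chi\in C^\infty_c(W)$ with $\chi\equiv 1$ on a neighborhood of $\partial\Omega$, and for $f\in\Ss'(\Omega)$ define
\begin{equation}\label{Eqn::DefEc}
\Ec f(x):=\begin{cases}f(x),&x\in\overline\Omega,\\[2pt]\chi(x)\,\displaystyle\sum_{j\in\Z}a_j\,f\bigl(\Psi(p,-b_ju)\bigr),&x=\Psi(p,u)\in W\setminus\overline\Omega,\\[2pt]0,&x\in\Uc\setminus(W\cup\overline\Omega).\end{cases}
\end{equation}
Each sample $\Psi(p,-b_ju)$ sits in $\Omega$ because $-b_ju>0$, and the sum converges by \eqref{Eqn::CondAB}. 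Property~\ref{Item::Domain::Dep} is immediate: for $x=\Psi(p,u)\in W\setminus\overline\Omega$, every sample lies on the integral curve $\{\Psi(p,\cdot)\}$ of $X$.

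For \ref{Item::Domain::Bdd}, we transfer the estimate to $(p,u)$-coordinates. Because $\Psi$ is a smooth diffeomorphism with derivatives bounded of all orders on $\Psi^{-1}(\supp\chi)$, pullback by $\Psi$ is a bounded isomorphism between function spaces on $W$ and on (subsets of) $\partial\Omega\times\R$, via a finite atlas on $\partial\Omega$ and the standard diffeomorphism invariance of $W^{k,p}$, $C^{k,s}$ and $\As_{pq}^{s\tau}$ on compact sets (see, e.g., \cite[Theorem~1.69]{TriebelTheoryOfFunctionSpacesIV}). In these coordinates the formula in \eqref{Eqn::DefEc} is, up to the smooth multiplier $\chi$, precisely the half-space operator $E^{a,b}$ acting in the $u$-variable along each slice $\{p\}\times\R$, applied to $f\circ\Psi$. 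A Fubini-type localization in charts on $\partial\Omega$ (cf.\ Proposition~\ref{Prop::Stuff} \ref{Item::Stuff::Fubini}) reduces the boundedness on $\partial\Omega\times(0,\infty)\to\partial\Omega\times\R$ to Theorem~\ref{MainThm}; multiplication by the smooth compactly supported $\chi$ is bounded on each of the quoted spaces, and pulling back through $\Psi$ delivers \ref{Item::Domain::Bdd}.

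The main obstacle is the matching across $\partial\Omega$: one must verify that the piecewise \eqref{Eqn::DefEc} glues into a single distribution in $\Uc$ with the claimed regularity. This is exactly the point at which the full strength of \eqref{Eqn::CondAB} (the identities $\sum_ja_j(-b_j)^k=1$ for \emph{all} $k\in\Z$) is needed, for in the straightened coordinates \eqref{Eqn::DefEc} reduces near $\{u=0\}$ to the half-space $E^{a,b}$ across $\{x_n=0\}$, whose distributional and regularity matching is already built into Theorem~\ref{MainThm}. The remaining work---choosing a finite atlas on $\partial\Omega$, handling the cutoff $\chi$, and tracking how the smooth reparametrization $\sigma$ interacts with each class of quasi-norms---is routine bookkeeping with no difficulties beyond the half-space case.
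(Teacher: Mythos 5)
Your construction differs from the paper's in one crucial place, and that difference introduces a genuine gap. You reparametrize the inward normal direction by a bijection $\sigma:(0,\infty)\to(0,T_0)$ so that all Seeley samples $\Psi(p,-b_ju)$ fall inside $\Omega$, and then you propose to pull back by $\Psi$ and invoke Theorem~\ref{MainThm}. But such a $\Psi$ cannot be a bounded diffeomorphism: since $\sigma$ maps $(0,\infty)$ onto the bounded interval $(0,T_0)$, necessarily $\sigma'(u)\to 0$ as $u\to\infty$, so $(\sigma^{-1})'$ is unbounded and the Jacobian of $\Psi^{-1}$ blows up near the inner edge of the tubular neighborhood. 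Consequently, $f\mapsto f\circ\Psi$ is \emph{not} bounded from $L^p(\Omega)$ into $L^p(\partial\Omega\times(0,\infty))$: for instance $f\equiv 1$ has $\|f\|_{L^p(\Omega)}<\infty$ while $\int_{\partial\Omega}\int_0^\infty|f(\Psi(p,u))|^p\,du\,d\mu(p)=\infty$. The same failure occurs for $W^{k,p}$ and for the $\As_{pq}^{s\tau}$ scales. So the step where you hand $f\circ\Psi$ to $E^{a,b}$ (and appeal to Theorem~\ref{MainThm}) is not legitimate --- the input is not in the space whose boundedness you are quoting. The symptom also appears in the atlas: the $u$-variable ranges over $(0,\infty)$, so the charts $\Phi_\lambda\circ\Psi$ cannot satisfy the bounded-diffeomorphism hypothesis of Lemma~\ref{Lem::PartUnity}. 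You acknowledge that ``tracking how the smooth reparametrization $\sigma$ interacts with each class of quasi-norms'' remains to be done, but this is exactly where the argument breaks; it is not bookkeeping.

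The paper avoids the problem by inserting the cutoff \emph{before} the Seeley extension rather than after: it applies $\Ec_0$ to $(\tilde\chi_0 f)\circ\Psi^{-1}$, which is compactly supported in $\partial\Omega\times(0,1)$, then extends by zero to $\partial\Omega\times\R_+$. With this ordering, $\Psi$ can be the standard tubular-neighborhood diffeomorphism $U_0\xrightarrow{\sim}\partial\Omega\times(-1,1)$ (no reparametrization needed), and the chart maps $\Phi_\lambda\circ\Psi$ extend to bounded diffeomorphisms of $\R^n$, so Lemma~\ref{Lem::PartUnity} applies directly; the partition-of-unity identity $\tilde\chi_1\cdot(\Ec_0[(\tilde\chi_0 f)\circ\Psi^{-1}]\circ\Psi)=\sum_\lambda(\tilde\rho_\lambda\cdot E[(\tilde\mu_\lambda f)\circ(\Phi_\lambda\circ\Psi)^{-1}])\circ(\Phi_\lambda\circ\Psi)$ then reduces everything to the half-space Theorem~\ref{MainThm}. (It might be possible to rescue an $L^p$ bound in your setup by exploiting the super-exponential decay of $|a_j|$ directly, but that would require a fresh estimate, and for the full $\As_{pq}^{s\tau}$ scale the argument is far from routine.) Your design does recover part~\ref{Item::Domain::Dep} (the one-parameter dependence) as cleanly as the paper's, since every sample lies on an integral curve of $X$; the fix is to keep that design but move the cutoff to the inside of the extension, as the paper does.
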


Note that \ref{Item::Domain::Dep} is false if one use the usual patching construction, see, for example \cite[(3.3.4/6)]{TriebelTheoryOfFunctionSpacesI}.

In the proof we need the standard partition of unity argument.
\begin{lem}\label{Lem::PartUnity} Let $\Phi:\R^n\to\R^n$ be a bounded diffeomorphism, i.e. $\Phi$ is bijective and
\begin{equation}\label{Eqn::BddDiffDef}
    \sup_{x\in\R^n}|\partial^\alpha\Phi(x)|+|\partial^\alpha(\Phi^{-1})(x)|<\infty,\quad\forall\,\alpha\in\N_0^n\backslash\{0\}.
\end{equation}

Let $\chi\in C_c^\infty(\R^n)$ be a bounded smooth function. Define a linear map $Tf:=(\chi\cdot f)\circ\Phi$.
\begin{enumerate}[(i)]
    \item\label{Item::PartUnity::Sob} $T:W^{k,p}(\R^n)\to W^{k,p}(\R^n)$, $T:C^{k,s}(\R^n)\to C^{k,s}(\R^n)$ are both defined and bounded for all $k\in\Z$, $0<p\le\infty$ and $0<s<1$.
    In particular $T:C^k(\R^n)\to C^k(\R^n)$ for $k\in\N_0$.
    \item\label{Item::PartUnity::BF} $T:\As_{pq}^{s\tau}(\R^n)\to\As_{pq}^{s\tau}(\R^n)$ for $\As\in\{\Bs,\Fs,\Ns\}$, $p,q\in(0,\infty]$, $s\in\R$ and $\tau\in[0,\frac1p]$ ($p<\infty$ if $\As=\Fs$). 

In particular $T:\As_{pq}^s(\R^n)\to\As_{pq}^s(\R^n)$ is bounded for $\As\in\{\Bs,\Fs\}$, for $p,q\in(0,\infty]$ and $s\in\R$.
\end{enumerate}
\end{lem}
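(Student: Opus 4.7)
The plan is to factor $T$ as the composition $T = C_\Phi \circ M_\chi$, where $M_\chi f := \chi f$ is pointwise multiplication by the compactly supported smooth function $\chi$ and $C_\Phi f := f \circ \Phi$ is pullback under the bounded diffeomorphism $\Phi$. Both building blocks are (known or easily shown to be) bounded on each space in the statement, so it suffices to treat them separately. Note that by \eqref{Eqn::BddDiffDef} and the inverse function theorem we have $|\det D\Phi|, |\det D\Phi^{-1}| \approx 1$ uniformly on $\R^n$, which makes $C_\Phi$ a bounded-distortion change of variables on every $L^p$.

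For part \ref{Item::PartUnity::Sob}, the boundedness of $M_\chi$ on $W^{k,p}(\R^n)$ and $C^{k,s}(\R^n)$ for $k \ge 0$ is immediate from the Leibniz rule. For $C_\Phi$ with $k \ge 0$, the Faà di Bruno formula combined with the bounds on $\partial^\alpha\Phi$ and the change-of-variables formula handles $W^{k,p}$ ($1 \le p \le \infty$), while the Hölder case uses the uniform Lipschitz constant of $\Phi$. The case $0 < p < 1$ follows by density of $C_c^\infty$ (cf.\ Remark~\ref{Rmk::SobRmk2}), since a uniform operator bound on $C_c^\infty$ extends to the completion. For $k < 0$, given a representation $f = \sum_{|\alpha|\le -k}\partial^\alpha g_\alpha$ as in \eqref{Eqn::NegSobNorm}, one iterates the identity $\partial_j(g \circ \Phi) = \sum_i ((\partial_i g)\circ\Phi)\,\partial_j\Phi_i$ (and its inverse via $D\Phi^{-1}$) to rewrite $(\partial^\alpha g_\alpha)\circ\Phi$ in divergence form $\sum_{|\beta|\le|\alpha|}\partial^\beta(\psi_{\alpha\beta}\,(g_\alpha\circ\Phi))$, with smooth coefficients $\psi_{\alpha\beta}$ having all derivatives bounded. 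Applying $M_\chi$ and using Leibniz keeps this divergence form, and bounding each $L^p$-norm via change of variables, then taking the infimum over $\{g_\alpha\}$, yields the estimate. The $C^{k,s}$ case with $k < 0$ is analogous, using \eqref{Eqn::NegHoldNorm}.

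For part \ref{Item::PartUnity::BF}, the boundedness on the classical spaces $\As_{pq}^s(\R^n)$ ($\tau = 0$) is standard: pointwise multiplication by $\chi \in C_c^\infty$ is bounded on $\As_{pq}^s(\R^n)$ for all $s \in \R$ and $p, q \in (0, \infty]$ (see \cite[Section~4.2]{TriebelTheoryOfFunctionSpacesI}, together with \cite{TriebelTheoryOfFunctionSpacesIV} for the $\Fs_{\infty q}^s$ case), and composition with a bounded diffeomorphism is bounded on $\As_{pq}^s(\R^n)$ in the same range (see \cite[Section~4.3]{TriebelTheoryOfFunctionSpacesI}). For the Morrey-type scale $\As_{pq}^{s\tau}$, the corresponding multiplier and diffeomorphism results are in \cite{YSYMorrey} (see also \cite{TriebelHybrid,HaroskeTriebelMorrey}); if one prefers a self-contained reduction, the case $s < 0$ follows from the classical case via the local representation of Proposition~\ref{Prop::Stuff} \ref{Item::Stuff::FInftyQ}, using that $\Phi$ and $\Phi^{-1}$ are globally Lipschitz, so $\Phi(B(x,R)) \subset B(\Phi(x), LR)$ and $R^{-n\tau}$ is preserved up to a fixed multiplicative constant; the case $s \ge 0$ is then reduced to $s < 0$ via the derivative characterization \eqref{Eqn::EqvNormDer} combined with the chain rule applied to $Tf$.

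The main obstacle, as in part \ref{Item::PartUnity::Sob}, is the negative-index reduction: one has to pull back $\partial^\alpha g_\alpha$ through $C_\Phi$ and rearrange back into divergence form, which is where the explicit algebraic structure of $\Phi$ enters. All the coefficients arising in this rearrangement are smooth with uniformly bounded derivatives because of \eqref{Eqn::BddDiffDef}, so the resulting operator is bounded on $L^p$ (or $C^{0,s}$, or $\As_{pq}^{s+m,\tau}$ after raising indices). No new ideas beyond Leibniz, Faà di Bruno, change of variables, and the already-cited multiplier/diffeomorphism results for $\As_{pq}^{s\tau}$ are required.
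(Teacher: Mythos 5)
Your proposal is correct and follows essentially the same route as the paper's own proof: the paper proves \ref{Item::PartUnity::Sob} by invoking the chain rule, product rule and change of variables (leaving the details, including the $k<0$ divergence-form rearrangement, to the reader), and proves \ref{Item::PartUnity::BF} by citing the multiplier and diffeomorphism theorems in \cite{YSYMorrey,HaroskeTriebelMorrey}. You simply fill in the details the paper omits, and your extra self-contained sketch of \ref{Item::PartUnity::BF} via the local representation and \eqref{Eqn::EqvNormDer} mirrors the argument already used in the paper's proof of Theorem~\ref{MainThm}~\ref{Item::Thm::MorreyBdd}.
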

\begin{proof}The result \ref{Item::PartUnity::Sob} follows from using chain rules, product rules and the formula of change of variables:
\begin{equation*}
    \int_{\R^n}|(\chi f)\circ\Phi(x)|^pdx=\int_{\R^n}|\chi(y) f(y)|^p|\det\nabla(\Phi^{-1})(y)|dy,\quad 0<p<\infty,\quad f\in L^p_\loc(\R^n).
\end{equation*}
Notice that $|\det\nabla(\Phi^{-1})|$ is a bounded smooth function, and all nonzero derivatives of $\Phi$ and $\chi$ are bounded. We leave the details to readers.

For \ref{Item::PartUnity::BF}, see for example \cite[Sections 6.1.1 and 6.2]{YSYMorrey} and \cite[Sections 4.2 and 4.3]{HaroskeTriebelMorrey}.
\end{proof}

\begin{proof}[Proof of Theorem~\ref{Thm::Domain}]In the following, we let $\Xs$ be the function class that belongs to the following:
\begin{multline*}
    \{W^{k,p},C^{k,s}:k\in\Z,\, p\in(0,\infty],\,s\in(0,1)\}\cup\{\Bs_{pq}^s,\Fs_{pq}^s,\Bs_{pq}^{s\tau},\Ns_{pq}^{s\tau}:p,q\in(0,\infty],\, s\in\R,\,\tau\in[0,\tfrac1p]\}
    \\\cup\{C^k:k\in\N_0\}\cup\{\Fs_{pq}^{s\tau}:p\in(0,\infty),\,q\in(0,\infty],\, s\in\R,\,\tau\in[0,\tfrac1p]\}.
\end{multline*}
Recall from Theorem~\ref{MainThm} that $E^{a,b}:\Xs(\R^n_+)\to\Xs(\R^n)$ is bounded for all such $\Xs$.

\smallskip
For a function $g(\theta,t)$ defined on $\partial\Omega\times\R_+$, we define its extension $\Ec_0g$ on $\partial\Omega\times\R$ by
\begin{equation}\label{Eqn::DefEc0}
    \Ec_0g(\theta,t):=\begin{cases}\sum_{j=-\infty}^\infty a_j\cdot g(\theta,-b_jt)&t<0;\\g(\theta,t)&t>0
    \end{cases},\quad\text{where }(a_j,b_j)_{j=-\infty}^\infty\text{ satisfies }\eqref{Eqn::CondAB}.
\end{equation}
Recall from Proposition~\ref{Prop::ConsAB} that such $(a_j,b_j)_j$ exist.

\smallskip
Since $\nabla\sigma$ is nonzero in a neighborhood of $\partial\Omega$, we see that there is a small $\eps_0>0$, such that the ODE flow map $\exp_{\nabla\sigma}:\partial\Omega\times(-\eps_0,\eps_0)\to\R^n$ given by 
\begin{equation*}
    \exp_{\nabla\sigma}(\theta,0)=\theta,\qquad\tfrac\partial{\partial t}\exp_{\nabla\sigma}(\theta,t)=\nabla\sigma(\Psi(\theta,t)),\qquad\text{for }\theta\in\partial\Omega,\quad -\eps_0<t<\eps_0,
\end{equation*}
is defined and is diffeomorphic onto its image. Shrinking $\eps_0$ if necessary we can assume that $\exp_{\nabla\sigma}(\partial\Omega\times(-\eps_0,\eps_0))\subseteq\Uc$. 

Since $\frac\partial{\partial t}\exp_{\nabla\sigma}(\theta,t)=\nabla\sigma(\Psi(\theta,t))$ and $\sigma(\exp_{\nabla\sigma}(\theta,0))=0$ we see that $\sigma(\exp_{\nabla\sigma}(\theta,t))=t$ hence $\{x:-\eps_0<\sigma(x)<\eps_0\}\subseteq\Uc$. 

We define $\Psi:\{-\eps_0<\sigma<\eps_0\}\to\partial\Omega\times(-\eps_0,\eps_0)$ by $\Psi:=(\exp_{\nabla\sigma})^{-1}$. Since $\sigma(\exp_{\nabla\sigma}(\theta,t))=t$, we see that
\begin{equation}\label{Eqn::Domain::Psi}
    \Psi(x)=(\psi'(x),\sigma(x))\text{ for some smooth map }\psi':\{-\eps_0<\sigma<\eps_0\}\to\partial\Omega.
\end{equation}

We take the following cutoff functions:
\begin{equation*}\label{Eqn::Domain::Chi}
    \chi_0,\chi_1\in C_c^\infty(-\tfrac23\eps_0,\tfrac23\eps_0)\text{ such that }\chi_0|_{[-\frac{\eps_0}2,\frac{\eps_0}2]}\equiv1,\ \chi_1|_{\supp\chi_0}\equiv1;\quad\text{and}\quad\tilde \chi_0:=\chi_0\circ\sigma,\ \tilde\chi_1:=\chi_1\circ\sigma.
\end{equation*}

We have $\tilde\chi_0,\tilde \chi_1\in C_c^\infty(\Uc)$. We define $\Ec$ as 
\begin{equation}\label{Eqn::DefEc}
    \Ec f:=(1-\tilde\chi_0)|_\Omega\cdot f+\tilde\chi_1\cdot(\Ec_0[(\tilde\chi_0\cdot f)\circ\Psi^{-1}]\circ\Psi).
\end{equation}

Clearly $\Ec f|_\Omega=(1-\tilde\chi_0)\cdot f+\tilde\chi_1\cdot\tilde\chi_0\cdot f=f$. Therefore $\Ec$ is an extension operator. 

From \eqref{Eqn::Domain::Psi} we see that the pullback vector field $\Psi^*\Coorvec t$ is exactly $\nabla\sigma$ in the domain. Since for each $(\theta,t)$, the value $\Ec_0 g(\theta,t)$ depends only on the line $\{g(\theta,s):s\in\R\}$, taking pullback we get \ref{Item::Domain::Dep}.

% To prove \ref{Item::Domain::Dep}, if a vector field $X$ satisfies $X\cdot\nabla\sigma=0$, then by \eqref{Eqn::Domain::Psi}, $(\Psi_*X)(\theta,t)\in T_\theta(\partial\Omega)$ for every $(\theta,t)$ in the domain. Therefore $\Psi_*X$ is a differential operator that only acts on $\theta$ but not $t$. By the formula \eqref{Eqn::DefEc0} we see that $\nabla_\theta\circ\Ec_0=\Ec_0\circ\nabla_\theta$

\medskip
We postpone the proof of $\Ec:\Ss'(\Omega)\to\Es'(\Uc)$ to the end. It is more convenient to prove \ref{Item::Domain::Bdd} by passing to local coordinate charts.

We consider the following objects:
\begin{itemize}
    \item Take a coordinate cover $\{\phi_\lambda':V_\lambda\subseteq\partial\Omega\to\R^{n-1}\}_{\lambda=1}^N$ of $\partial\Omega$, where $V_\lambda$ are open in the submanifold $\partial\Omega$.
    \item For $1\le \lambda\le N$, define $\Phi_\lambda:V_\lambda\times(-\eps_0,\eps_0)\to\R^{n-1}\times\R^1$ by $\Phi_\lambda(\theta,t):=(\phi'_\lambda(\theta),t)$.
    \item Take  a partition of unity $\{\mu_\lambda\in C_c^\infty(V_\lambda)\}_{\lambda=1}^N$ of $\partial\Omega$, i.e. $\sum_{\lambda=1}^N\mu_\lambda\equiv1$ on $\partial\Omega$.
    \item Let $\{\rho_\lambda\in C_c^\infty(V_\lambda)\}_{\lambda=1}^N$ be such that $\rho_\lambda|_{\supp\mu_\lambda}\equiv1$.
    \item For $1\le \lambda\le N$, we let 
    $$\tilde\mu_\lambda:=(\mu_\lambda\otimes\chi_0)\circ\Psi=(\mu_\lambda\circ\psi')\cdot(\chi_0\circ\sigma)\quad\text{and}\quad \tilde\rho_\lambda(x):=(\rho_\lambda\otimes\chi_1)\circ\Phi_\lambda^{-1}(x)=(\rho_\lambda\circ{\phi'_\lambda}^{-1})(x')\cdot\chi_1(x_n).$$
\end{itemize}
See the following picture:

{
\centerline{
\includegraphics[width=0.9\textwidth]{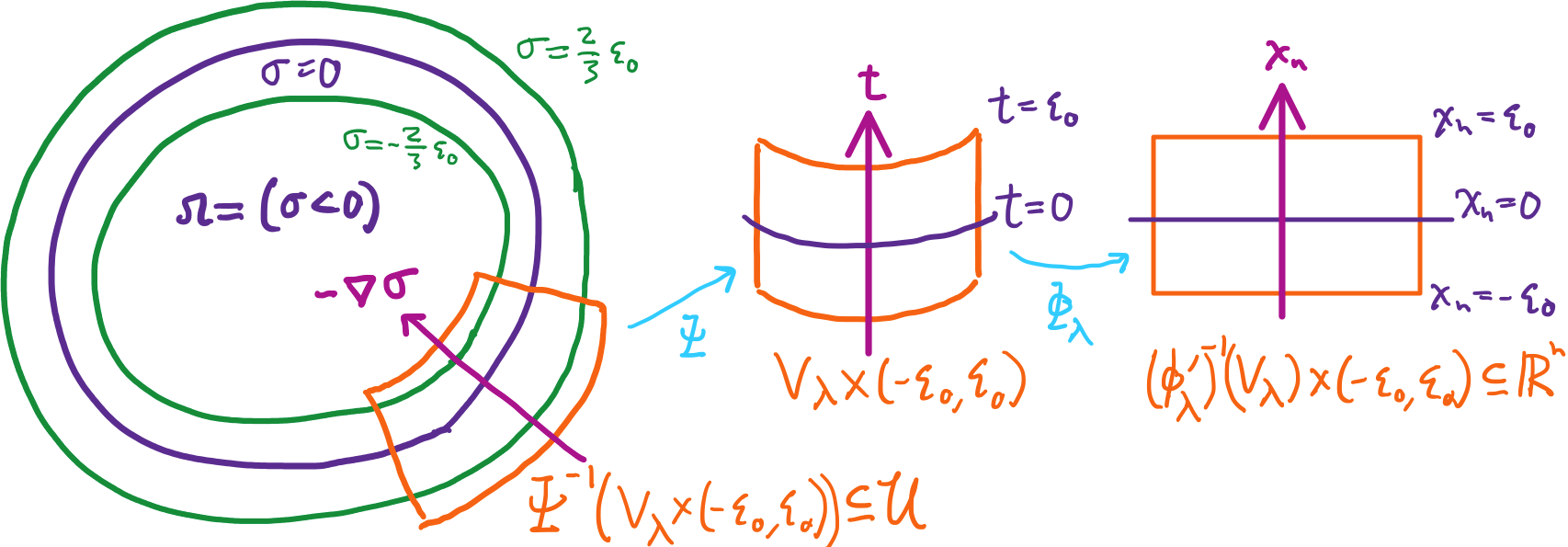}}
}

Now $\{\Phi_\lambda\circ\Psi:\Psi^{-1}(V_\lambda\times(-\eps_0,\eps_0))\to\R^n\}_\lambda$ is a coordinate cover of $\{-\eps_0<\sigma<\eps_0\}$. By shrinking each $V_\lambda$ if necessary, we can ensure the following:
\begin{itemize}
    \item For every $1\le\lambda\le N$, $\Phi_\lambda\circ\Psi:\Psi^{-1}\big(V_\lambda\times(-\frac23\eps_0,\frac23\eps_0)\big)\subseteq\Uc\to\R^n$ can extend to a bounded diffeomorphism. In other words, there are $\Xi_\lambda:\R^n\to\R^n$ that  satisfies \eqref{Eqn::BddDiffDef} and $\Xi_\lambda|_{\Psi^{-1}(V_\lambda\times(-\frac23\eps_0,\frac23\eps_0)}=\Phi_\lambda\circ\Psi$, for each $\lambda$.
\end{itemize}

From the construction of $\Psi$, $\Phi_\lambda$, $\mu_\lambda$ and $\rho_\lambda$ we see that
\begin{itemize}
    \item $\tilde\mu_\lambda\in C_c^\infty(\Psi^{-1}(V_\lambda\times(-\eps_0,\eps_0)))$ and $\tilde\rho_\lambda\in C_c^\infty(\Phi_\lambda(V_\lambda\times(-\eps_0,\eps_0)))$.
    \item When $\supp g\subseteq(\supp\mu_\lambda)\times[0,\infty)$, we have $\supp \Ec_0g\subseteq(\supp\mu_\lambda)\times\R$. 
    
    In particular $(\1_{\partial\Omega}\otimes\chi_1)\cdot\Ec_0[(\mu_\lambda\otimes\chi_0)\cdot g]=(\rho_\lambda\otimes\chi_1)\cdot\Ec_0[(\mu_\lambda\otimes\chi_0)\cdot g]$ (since $\rho_\lambda|_{\supp\mu_\lambda}\equiv1$).
    \item When $\supp g\subseteq V_\lambda\times[0,1)$, we have $\Ec_0[g]|_{V_\lambda\times(-1,1)}=E[g\circ\Phi_\lambda^{-1}]\circ\Phi_\lambda$, the right hand side of which is well-defined. Here $E$ is the extension operator from \eqref{Eqn::ExtOp}.
\end{itemize}

Therefore
\begin{equation}\label{Eqn::Domain::Part}
    \begin{aligned}
    &\tilde\chi_1\cdot(\Ec_0[(\tilde\chi_0\cdot f)\circ\Psi^{-1}]\circ\Psi)=\sum_{\lambda=1}^N\big((\1_{\partial\Omega}\otimes\chi_1)\cdot \Ec_0[(\tilde\mu_\lambda\cdot f)\circ\Psi^{-1}]\big)\circ\Psi
    \\
    =&\sum_{\lambda=1}^N\left((\rho_\lambda\otimes\chi_1)\cdot\big( E[(\tilde\mu_\lambda\cdot f)\circ\Psi^{-1}\circ\Phi_\lambda^{-1}]\circ\Phi_\lambda\big)\right)\circ\Psi=\sum_{\lambda=1}^N\left(\tilde\rho_\lambda\cdot E[(\tilde\mu_\lambda\cdot f)\circ(\Phi_\lambda\circ\Psi)^{-1}]\right)\circ(\Phi_\lambda\circ\Psi).
\end{aligned}
\end{equation}

Applying Lemma~\ref{Lem::PartUnity} and Theorem~\ref{MainThm} \ref{Item::Thm::SobBdd} - \ref{Item::Thm::MorreyBdd}, we have the boundedness of compositions
\begin{equation*}
    \Xs(\Omega)\xrightarrow{f\mapsto\left((\tilde\mu_\lambda f)\circ(\Phi_\lambda\circ\Psi)^{-1}\right)_{\lambda=1}^N}\Xs(\R^n_+)^N\xrightarrow{E}\Xs(\R^n)^N\xrightarrow{(g_\lambda)_\lambda\mapsto\sum_{\lambda=1}^N (\tilde\rho_\lambda g)\circ(\Phi_\lambda\circ\Psi)}\Xs_c(U_0)\subset\Xs_c(\Uc).
\end{equation*}

On the other hand, $\tilde\chi_0=\chi_0\circ\rho\in C_c^\infty(U_0)$ is identical to $1$ near $\partial\Omega$, therefore $(1-\tilde\chi_0)|_\Omega\in C_c^\infty(\Omega)$. By Lemma~\ref{Lem::PartUnity} $[f\mapsto (1-\tilde\chi_0)|_\Omega\cdot f]:\Xs(\Omega)\to\Xs_c(\Uc)$ is bounded. Altogether we conclude that $\Ec:\Xs(\Omega)\to\Xs(\Uc)$ is bounded. This completes the proof of \ref{Item::Domain::Bdd}.

\smallskip
Finally we illustrate the definedness of $\Ec:\Ss'(\Omega)\to\Es'(\Uc)$. Note that alternatively one can obtain this statement by using duality and prove the boundedness $\Ec^*:C_\loc^\infty(\Uc)\to\Ss(\Omega)(=\{g\in C^\infty(\R^n):\supp g\subseteq\overline{\Omega}\})$.

For a $f\in\Ss'(\Omega)$, let $\tilde f$ be an extension of $f$. Since $\Omega$ is a bounded domain, we can assume $\tilde f$ to have compact support, otherwise we replace $\tilde f$ with $\kappa\tilde f$ where $\kappa\in C_c^\infty(\R^n)$ is such that $\kappa|_{\Omega}\equiv1$.

By the structure theorem of distributions (see \cite[Theorem~6.27]{GrandpaRudin} for example), there are $M\ge0$ and $\{g_\alpha\}_{|\alpha|\le M}\subset C_c^0(\R^n)$ such that $\tilde f=\sum_{|\alpha|\le M}\partial^\alpha g_\alpha$. By Proposition~\ref{Prop::Stuff} \ref{Item::Stuff::SumDer} this is saying that $\tilde f\in\Bs_{\infty\infty}^{-M}(\R^n)$ for some $M>0$, which means $f\in\Bs_{\infty\infty}^{-M}(\Omega)$ for the same $M$.

Now by \ref{Item::Domain::Bdd} $\Ec f\in\Bs_{\infty\infty}^{-M}(\Uc)\subset\Ds'(\Uc)$ has compact support, we conclude that $\Ec f\in\mathscr E'(\Uc)$.
\end{proof}

% \section{Further Remark}
% Boundedness of Gevrey class?

\vspace{0in}
\bibliographystyle{amsalpha}
\bibliography{reference} 
\end{document}